\begin{document}

\theoremstyle{plain}

\newtheorem{thm}{Theorem}[section]

\newtheorem{lem}[thm]{Lemma}
\newtheorem{Problem B}[thm]{Problem B}

\newtheorem{thml}{Theorem}
\renewcommand*{\thethml}{\Alph{thml}}

\newtheorem{pro}[thm]{Proposition}
\newtheorem{conj}[thm]{Conjecture}
\newtheorem{cor}[thm]{Corollary}
\newtheorem{que}[thm]{Question}
\newtheorem{rem}[thm]{Remark}
\newtheorem{defi}[thm]{Definition}
\newtheorem{hyp}[thm]{Hypotheses}

\newtheorem*{thmA}{Theorem A}
\newtheorem*{thmB}{Theorem B}
\newtheorem*{corB}{Corollary B}
\newtheorem*{thmC}{Theorem C}
\newtheorem*{thmD}{Theorem D}
\newtheorem*{thmE}{Theorem E}
 
\newtheorem*{thmAcl}{Theorem A$^{*}$}
\newtheorem*{thmBcl}{Theorem B$^{*}$}
\newcommand{\dd}{\mathrm{d}}

\newcommand{\Maxn}{\operatorname{Max_{\textbf{N}}}}
\newcommand{\Syl}{\operatorname{Syl}}
\newcommand{\Lin}{\operatorname{Lin}}
\newcommand{\U}{\mathbf{U}}
\newcommand{\R}{\mathbf{R}}
\newcommand{\dl}{\operatorname{dl}}
\newcommand{\Con}{\operatorname{Con}}
\newcommand{\cl}{\operatorname{cl}}
\newcommand{\Stab}{\operatorname{Stab}}
\newcommand{\Aut}{\operatorname{Aut}}
\newcommand{\Ker}{\operatorname{Ker}}
\newcommand{\InnDiag}{\operatorname{InnDiag}}
\newcommand{\fl}{\operatorname{fl}}
\newcommand{\Irr}{\operatorname{Irr}}
\newcommand{\FF}{\mathbb{F}}
\newcommand{\EE}{\mathbb{E}}
\newcommand{\normal}{\trianglelefteq}
\newcommand{\sn}{\normal\normal}
\newcommand{\Bl}{\mathrm{Bl}}
\newcommand{\NN}{\mathbb{N}}
\newcommand{\N}{\mathbf{N}}
\newcommand{\bfC}{\mathbf{C}}
\newcommand{\bfO}{\mathbf{O}}
\newcommand{\bfF}{\mathbf{F}}
\def\GGG{{\mathcal G}}
\def\HHH{{\mathcal H}}
\def\HH{{\mathcal H}}
\def\irra#1#2{{\rm Irr}_{#1}(#2)}

\renewcommand{\labelenumi}{\upshape (\roman{enumi})}

\newcommand{\PSL}{\operatorname{PSL}}
\newcommand{\PSU}{\operatorname{PSU}}
\newcommand{\alt}{\operatorname{Alt}}

\providecommand{\V}{\mathrm{V}}
\providecommand{\E}{\mathrm{E}}
\providecommand{\ir}{\mathrm{Irm_{rv}}}
\providecommand{\Irrr}{\mathrm{Irm_{rv}}}
\providecommand{\re}{\mathrm{Re}}

\numberwithin{equation}{section}
\def\irrp#1{{\rm Irr}_{p'}(#1)}

\def\ibrrp#1{{\rm IBr}_{\Bbb R, p'}(#1)}
\def\C{{\mathbb C}}
\def\Q{{\mathbb Q}}
\def\irr#1{{\rm Irr}(#1)}
\def\irrp#1{{\rm Irr}_{p^\prime}(#1)}
\def\irrq#1{{\rm Irr}_{q^\prime}(#1)}
\def \c#1{{\cal #1}}
\def \aut#1{{\rm Aut}(#1)}
\def\cent#1#2{{\bf C}_{#1}(#2)}
\def\norm#1#2{{\bf N}_{#1}(#2)}
\def\zent#1{{\bf Z}(#1)}
\def\syl#1#2{{\rm Syl}_#1(#2)}
\def\normal{\triangleleft\,}
\def\oh#1#2{{\bf O}_{#1}(#2)}
\def\Oh#1#2{{\bf O}^{#1}(#2)}
\def\det#1{{\rm det}(#1)}
\def\gal#1{{\rm Gal}(#1)}
\def\fit#1{{\bf F}(#1)}
\def\ker#1{{\rm ker}(#1)}
\def\normalm#1#2{{\bf N}_{#1}(#2)}
\def\alt#1{{\rm Alt}(#1)}
\def\iitem#1{\goodbreak\par\noindent{\bf #1}}
   \def \mod#1{\, {\rm mod} \, #1 \, }
\def\sbs{\subseteq}

\def\gc{{\bf GC}}
\def\ngc{{non-{\bf GC}}}
\def\ngcs{{non-{\bf GC}$^*$}}
\newcommand{\notd}{{\!\not{|}}}

\newcommand{\Z}{\mathbf{Z}}
\newcommand{\Out}{{\mathrm {Out}}}
\newcommand{\Mult}{{\mathrm {Mult}}}
\newcommand{\Inn}{{\mathrm {Inn}}}
\newcommand{\IBR}{{\mathrm {IBr}}}
\newcommand{\IBRL}{{\mathrm {IBr}}_{\ell}}
\newcommand{\IBRP}{{\mathrm {IBr}}_{p}}
\newcommand{\cd}{\mathrm{cd}}
\newcommand{\ord}{{\mathrm {ord}}}
\def\id{\mathop{\mathrm{ id}}\nolimits}
\renewcommand{\Im}{{\mathrm {Im}}}
\newcommand{\Ind}{{\mathrm {Ind}}}
\newcommand{\diag}{{\mathrm {diag}}}
\newcommand{\soc}{{\mathrm {soc}}}
\newcommand{\End}{{\mathrm {End}}}
\newcommand{\sol}{{\mathrm {sol}}}
\newcommand{\Hom}{{\mathrm {Hom}}}
\newcommand{\Mor}{{\mathrm {Mor}}}
\newcommand{\Mat}{{\mathrm {Mat}}}
\def\rank{\mathop{\mathrm{ rank}}\nolimits}
\newcommand{\Tr}{{\mathrm {Tr}}}
\newcommand{\tr}{{\mathrm {tr}}}
\newcommand{\Gal}{{\rm Gal}}
\newcommand{\Spec}{{\mathrm {Spec}}}
\newcommand{\ad}{{\mathrm {ad}}}
\newcommand{\Sym}{{\mathrm {Sym}}}
\newcommand{\Char}{{\mathrm {Char}}}
\newcommand{\pr}{{\mathrm {pr}}}
\newcommand{\rad}{{\mathrm {rad}}}
\newcommand{\abel}{{\mathrm {abel}}}
\newcommand{\PGL}{{\mathrm {PGL}}}
\newcommand{\PCSp}{{\mathrm {PCSp}}}
\newcommand{\PGU}{{\mathrm {PGU}}}
\newcommand{\codim}{{\mathrm {codim}}}
\newcommand{\ind}{{\mathrm {ind}}}
\newcommand{\Res}{{\mathrm {Res}}}
\newcommand{\Lie}{{\mathrm {Lie}}}
\newcommand{\Ext}{{\mathrm {Ext}}}
\newcommand{\Alt}{{\mathrm {Alt}}}
\newcommand{\AAA}{{\sf A}}
\newcommand{\SSS}{{\sf S}}
\newcommand{\DDD}{{\sf D}}
\newcommand{\QQQ}{{\sf Q}}
\newcommand{\CCC}{{\sf C}}
\newcommand{\SL}{{\mathrm {SL}}}
\newcommand{\Sp}{{\mathrm {Sp}}}
\newcommand{\PSp}{{\mathrm {PSp}}}
\newcommand{\SU}{{\mathrm {SU}}}
\newcommand{\GL}{{\mathrm {GL}}}
\newcommand{\GU}{{\mathrm {GU}}}
\newcommand{\Spin}{{\mathrm {Spin}}}
\newcommand{\CC}{{\mathbb C}}
\newcommand{\CB}{{\mathbf C}}
\newcommand{\RR}{{\mathbb R}}
\newcommand{\QQ}{{\mathbb Q}}
\newcommand{\ZZ}{{\mathbb Z}}
\newcommand{\bfN}{{\mathbf N}}
\newcommand{\bfZ}{{\mathbf Z}}
\newcommand{\PP}{{\mathbb P}}
\newcommand{\cG}{{\mathcal G}}
\newcommand{\cH}{{\mathcal H}}
\newcommand{\cQ}{{\mathcal Q}}
\newcommand{\GA}{{\mathfrak G}}
\newcommand{\cT}{{\mathcal T}}
\newcommand{\cL}{{\mathcal L}}
\newcommand{\IBr}{\mathrm{IBr}}
\newcommand{\cS}{{\mathcal S}}
\newcommand{\cR}{{\mathcal R}}
\newcommand{\GCD}{\GC^{*}}
\newcommand{\TCD}{\TC^{*}}
\newcommand{\FD}{F^{*}}
\newcommand{\GD}{G^{*}}
\newcommand{\HD}{H^{*}}
\newcommand{\GCF}{\GC^{F}}
\newcommand{\TCF}{\TC^{F}}
\newcommand{\PCF}{\PC^{F}}
\newcommand{\GCDF}{(\GC^{*})^{F^{*}}}
\newcommand{\RGTT}{R^{\GC}_{\TC}(\theta)}
\newcommand{\RGTA}{R^{\GC}_{\TC}(1)}
\newcommand{\Om}{\Omega}
\newcommand{\eps}{\epsilon}
\newcommand{\varep}{\varepsilon}
\newcommand{\al}{\alpha}
\newcommand{\chis}{\chi_{s}}
\newcommand{\sigmad}{\sigma^{*}}
\newcommand{\PA}{\boldsymbol{\alpha}}
\newcommand{\gam}{\gamma}
\newcommand{\lam}{\lambda}
\newcommand{\la}{\langle}
\newcommand{\genf}{F^*}
\newcommand{\ra}{\rangle}
\newcommand{\hs}{\hat{s}}
\newcommand{\htt}{\hat{t}}
\newcommand{\tG}{\hat G}
\newcommand{\St}{\mathsf {St}}
\newcommand{\bfs}{\boldsymbol{s}}
\newcommand{\bfl}{\boldsymbol{\lambda}}
\newcommand{\tn}{\hspace{0.5mm}^{t}\hspace*{-0.2mm}}
\newcommand{\ta}{\hspace{0.5mm}^{2}\hspace*{-0.2mm}}
\newcommand{\tb}{\hspace{0.5mm}^{3}\hspace*{-0.2mm}}
\def\skipa{\vspace{-1.5mm} & \vspace{-1.5mm} & \vspace{-1.5mm}\\}
\newcommand{\tw}[1]{{}^#1\!}
\renewcommand{\mod}{\bmod \,}

\marginparsep-0.5cm

\renewcommand{\thefootnote}{\fnsymbol{footnote}}
\footnotesep6.5pt
\title{The blocks with five irreducible characters}

\author[]{J. Miquel Mart\'inez}
\address{J. Miquel Mart\'inez, Departament de Matem\`atiques, Universitat de Val\`encia}
\email{josep.m.martinez@uv.es}

\author[]{Noelia Rizo}
\address{Noelia Rizo, Departament de Matem\`atiques, Universitat de Val\`encia}
\email{noelia.rizo@uv.es}

\author[]{Lucia Sanus}
\address{Lucia  Sanus, Departament de Matem\`atiques, Universitat de Val\`encia}
\email{lucia.sanus@uv.es}

\thanks{This research is partially supported by the
Spanish Ministerio de Ciencia
e Innovaci\'on
(Grant PID2019-103854GB-I00 funded by MCIN/AEI/10.13039/501100011033) and by Generalitat Valenciana CIAICO/2021/163. The first author is supported by a fellowship UV-INV-PREDOC20-1356056 from Universitat de Val\`encia. The second author is supported by a CDEIGENT grant CIDEIG/2022/29 funded by Generalitat Valenciana.}

\keywords{}

\subjclass[2010]{20C20, 20C15}

\begin{abstract} 
Let $G$ be a finite group, $p$ a prime and $B$ a Brauer $p$-block of $G$ with defect group $D$. We prove that if the number of irreducible ordinary characters in $B$ is $5$ then $D\cong \CCC_5, \CCC_7, \DDD_8$ or $\QQQ_8$, assuming that the Alperin--McKay conjecture holds for $B$.
 \end{abstract}

\maketitle

\section{Introduction}

Suppose that $G$ is a finite group, $p$ is a prime, and $B$ is a Brauer $p$-block of $G$ with defect group $D$. Let $k(B)$ denote the number of irreducible ordinary characters that belong to $B$. In \cite{BF59}, R. Brauer and W. Feit proved that 
$$k(B)\leq \frac{1}{4}|D|^{2}+1.$$ Besides this bound, very little is known about the number of irreducible complex characters in the block $B$. However, much has been conjectured. For instance, Brauer's $k(B)$-conjecture claims that $k(B)\leq |D|$, and the H\'ethelyi--K\"ulshammer conjecture posits that if $k(B)>1$ then $k(B)\geq 2\sqrt{p-1}$ (see \cite{brauer} and \cite{HK}). Remarkably, neither of these conjectures has even been reduced to simple groups.

Another fundamental problem on the number $k(B)$ is Brauer's problem 21, which asks whether for any positive integer $n$ there are finitely many isomorphism classes of prime power order groups that occur as defect groups of blocks with exactly $n$ irreducible characters. This problem was shown to have a positive answer for $p$-solvable groups in \cite{Kul90}. Using this result and E. Zelmanov's solution to the restricted Burnside problem, it was shown to have a positive answer as a consequence of the Alperin--McKay conjecture in \cite{KR}. However, no classification of the possible defect groups is given for any value of $k(B)$.

R. Brauer and C. Nesbitt proved in 1941 that if $k(B)=1$ then $|D|=1$ (see \cite{BN41}). The next step came 41 years later, when J. Brandt proved in \cite{Bra} that $k(B)=2$ if and only if $|D|=2$. If $B$ is the principal block, the classifications of defect groups for $k(B)\leq 6$ have been obtained in \cite{koshitani}, \cite{rsv} and \cite{HSV23} but at the present time there is no classification for arbitrary blocks.

It is however a consequence of the Alperin--McKay conjecture that $k(B)=3$ if and only if $|D|=3$, as shown in \cite[Theorem 4.2]{KNST}. In \cite{MRS23} we proved that if $k(B)=4$ then $D\cong\CCC_2\times\CCC_2, \CCC_4,$ or $\CCC_5$ also assuming that the Alperin--McKay conjecture holds for $B$. The main result of this paper is to extend these results.

\begin{thml}\label{thm:A}
Suppose that $B$ is a Brauer $p$-block of a finite group $G$
with defect group $D$. Assume that $k(B)=5$. If the
Alperin--McKay conjecture holds for $B$, then $D$ is isomorphic to $\CCC_5, \CCC_7,\DDD_8$ or $\QQQ_8$.
\end{thml}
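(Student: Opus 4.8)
The plan is to use the Alperin--McKay hypothesis only as a device for computing the number of height-zero characters. Writing $b$ for the Brauer correspondent of $B$ in $\norm{G}{D}$, we have $D\normal\norm{G}{D}$, and the assumption gives $k_0(B)=k_0(b)$, so that $k_0(B)$ may be read off from a block with \emph{normal} defect group. For such $b$, K\"ulshammer's structure theorem describes $b$ up to basic Morita equivalence in terms of the inertial quotient $E$ (a $p'$-group acting faithfully on $D$) together with a $2$-cocycle $\gamma$, and the height-zero characters are precisely those lying over the linear characters of $D$. This yields a formula $k_0(b)=\sum_{\theta\in[\Irr(D/D')/E]}k_{\gamma_\theta}(E_\theta)$, where $E_\theta$ is the stabilizer and $k_{\gamma_\theta}$ counts $\gamma_\theta$-regular classes. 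The trivial character $1_D$ has stabilizer $E$ and (for trivial cocycle) contributes $k(E)$ characters, all of $p'$-degree and hence of height zero, so $k_0(B)\ge k(E)$; counting the remaining orbits gives the robust bound $k_0(B)\ge 1+(|D:D'|-1)/|E|$.

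I would first dispose of the two classical extremes. If $B$ is nilpotent then $E=1$ and, by Brou\'e--Puig, $B$ is basically Morita equivalent to $\mathcal{O}D$, so $k(B)=k(D)=5$; the $p$-groups with exactly five conjugacy classes are $\CCC_5$, $\DDD_8$ and $\QQQ_8$, producing three of the four answers. If $D$ is cyclic then, unconditionally, Dade's theory gives $k(B)=e+(|D|-1)/e$ with $e\mid p-1$, and solving $e+(|D|-1)/e=5$ forces $(|D|,e)\in\{(5,1),(5,4),(7,2),(7,3)\}$, i.e.\ $D\cong\CCC_5$ or $\CCC_7$. It therefore remains to prove that a \emph{non-cyclic} defect group with $E\ne 1$ never yields $k(B)=5$.

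For that remaining case I would separate according to whether $D$ is abelian. If $D$ is abelian then the Height Zero Theorem gives $k(B)=k_0(B)=k_0(b)$, so I must show that $\sum_{\theta}k_{\gamma_\theta}(E_\theta)=5$ has no solution for non-cyclic abelian $D$ and $1\ne E\le\Aut(D)$ a $p'$-group. Here the bound $k_0(b)\ge k(E)$, combined with $E\hookrightarrow\GL_r(p)$ where $p^r=|D/\Phi(D)|\ge p^2$, bounds $p$ and the rank, reducing to finitely many pairs $(D,E)$ that I would check individually (for instance $\CCC_2\times\CCC_2$ gives $4$, while $\CCC_3\times\CCC_3$ and $\CCC_2^3$ give at least $6$). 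If $D$ is non-abelian then the Height Zero Theorem forces $k_0(B)\le 4$, hence $k(E)\le 3$ and $|D:D'|$ is small; this pins $D/\Phi(D)$ to rank at most two, and the classification of $p$-groups of rank two (maximal class) leaves only finitely many $D$ admitting a nontrivial coprime automorphism. One then checks that among $2$-groups only $\QQQ_8$ survives, where $E=\CCC_3$ gives $\SL_2(3)$ with $k(B)=7\ne 5$, and that for odd $p$ the extraspecial and maximal-class groups already satisfy $k_0(b)>4$.

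The main obstacle is exactly this last, non-abelian step: the Alperin--McKay reduction controls only $k_0(B)$, whereas we must exclude $k(B)=5$, and $k_0(b)$ does not determine $k(B)$ --- for example $\QQQ_8$ admits blocks with $k_0=4$ but $k\in\{5,7\}$. Bridging this gap requires showing that, once $D/D'$ is small, the positive-height characters force $k(B)$ to grow with $|D|$, which I expect to handle through the classification of $p$-groups of small rank together with a careful height analysis of $b$. A secondary technical difficulty is keeping track of the cocycle $\gamma_\theta$ in K\"ulshammer's theorem, since a nontrivial cocycle can lower $k_{\gamma_\theta}(E_\theta)$ below $k(E_\theta)$ and must be ruled out before the orbit estimates may be applied cleanly.
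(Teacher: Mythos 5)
Your proposal takes a genuinely different route from the paper (which reduces, via Fong--Reynolds and character triples, to the case where $D$ is a minimal normal Sylow $p$-subgroup and $G/\zent G$ is an affine primitive permutation group of rank $2$, $3$ or $4$ on $D$, then invokes the Passman, Liebeck and Foulser classifications), and parts of it are sound: the nilpotent case via Brou\'e--Puig correctly yields $\CCC_5,\DDD_8,\QQQ_8$, and Dade's cyclic theory correctly pins $|D|\in\{5,7\}$. However, there is a genuine gap at the heart of the remaining case, and it is precisely the issue you defer as a ``secondary technical difficulty.'' The inequality $k_0(b)\geq k(E)$ holds only for trivial cocycle; when $\gamma_\theta$ is nontrivial, an orbit contributes $k_{\gamma_\theta}(E_\theta)$, which can equal $1$ while $|E_\theta|$ is an arbitrarily large square (groups of central type, Howlett--Isaacs), or $2$ with $|E_\theta|$ unbounded. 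Consequently your counting bounds neither $|E|$ nor $p$ nor the rank of $D$, and the claim that the abelian case ``reduces to finitely many pairs $(D,E)$ that I would check individually'' is false as stated: for example, $D$ elementary abelian of order $p^2$ with $E\leq\Gamma(p^2)$ of order roughly $2(p^2-1)$ and a suitable cocycle satisfies all your numerical constraints for infinitely many $p$. Eliminating these infinite families is the actual content of the paper: it needs Higgs's theorem on groups with exactly two projective characters (Theorem \ref{lem:demeyerjanusz2}), fully-ramified/square-index constraints (Lemmas \ref{lem:demeyerjanusz} and \ref{lem:contradiction}), Brauer's subsection formula to control $l(b_u)$ locally (Proposition \ref{pro:p2}), number-theoretic contradictions, and GAP verification for the exceptional degrees $5^4$, $7^3$, $7^4$ --- all organized by the rank $\leq 4$ classifications. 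Your sketch has no substitute for this classification input.

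The non-abelian branch has a second gap, which you partly concede yourself. The step ``the Height Zero Theorem forces $k_0(B)\leq 4$'' invokes the only-if direction of Brauer's height zero conjecture, far heavier machinery than anything the paper uses; the paper instead passes to the dominated block $\overline{b}$ of $\norm G D/\Phi(D)$, applies the known classifications for $k(\overline{b})\leq 4$ to determine $D/\Phi(D)$, and then uses Landrock's results, Dade's cyclic theory, and Sambale's classification for metacyclic $2$-groups to land on $\QQQ_8$ and $\DDD_8$ --- this is its bridge from $k_0$ to $k$, which in your write-up remains an admitted expectation (``which I expect to handle through\dots''). Moreover, ``$k(E)\leq 3$ hence $|D:D'|$ is small'' again presumes a trivial cocycle, and ``the classification of $p$-groups of rank two (maximal class)'' is incorrect for odd $p$: two-generated $p$-groups need not have maximal class (Taussky's theorem is special to $p=2$). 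So even granting BHZ, the non-abelian analysis as sketched does not close, and the abelian analysis cannot close without replacing the finiteness claim by an actual classification of the possible actions of $E$ on $D$ with few orbits and few projective characters.
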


The proof of Theorem \ref{thm:A} is achieved via a reduction to a problem on primitive permutation groups of low rank with a small number of projective characters, in the sense of Schur. To solve this problem we use known classifications of these groups, and use some very \textit{ad hoc} arguments for each case in the classifications. Further, we care to remark that the defect groups that we find coincide with those found for principal blocks in \cite{rsv}. 

There is a complementary conjecture to Brauer's problem 21, known as Donovan's conjecture. It asks whether, for a finite $p$-group $D$, there are finitely many Morita equivalence classes of blocks with defect group isomorphic to $D$. Although this conjecture remains open, there has been a recent breakthrough for abelian defect groups \cite{EL} when $p=2$. 
Assuming the Alperin--McKay conjecture and using the progress in Donovan's conjecture that is gathered in the Block library {{\cite{wiki}}}, it is possible to determine the Morita equivalence classes of blocks with $k(B)\in\{4, 5\}$ by using \cite[Theorem A]{MRS23} and Theorem \ref{thm:A}.


Section \ref{sec:red} is devoted to the proof of the reduction theorem. In Section \ref{sec:lemmas} we show some preliminary general results on projective characters for the proof of the reduced statement and in Section \ref{sec:proof} we provide the results and known classifications of primitive permutation groups that we need and complete the proof of the main result. 


\noindent {\bf Acknowledgements.}
Part of this work was done while the first named author visited the Department of Mathematics at the  Rheinland-Pf\"alzische Technische Universit\"at (formerly TU Kaiserslautern) funded by a travel grant associated to UV-INV-PREDOC20-1356056. He thanks the entire department for their warm hospitality and specially Gunter Malle for supervising his visit and for a thorough read of an earlier version of this manuscript and suggestions that helped improve the exposition.  This research was also carried out during a visit by the third named author to the Dipartimento di Matematica e Informatica (DIMAI) of Universit\` a degli Studi di Firenze. She would like to thank DIMAI for their hospitality. The authors also thank Benjamin Sambale for a thorough read of an earlier version of this manuscript and suggestions that helped improve the exposition of this work.

\section{The reduction}\label{sec:red}

In this paper we follow the notation of \cite{I08} for finite groups, \cite{N18} for ordinary characters and \cite{N98} for Brauer characters and blocks. 

\subsection{Preliminary results} 

We begin by stating some necessary results for the reduction theorem. First, we recall the known results on small values of $k(B)$. We denote by ${\rm IBr}(B)$ the set of irreducible Brauer characters in the $p$-block $B$ and by $l(B)=|\IBr(B)|$. We also denote by $k_0(B)$ the number of height zero irreducible characters in $B$.
\begin{thm}\label{thm:kb small}
Let $G$ be a finite group, let $B$ a $p$-block of $G$ with defect group $D$. Then
\begin{enumerate}
\item  $k(B)=1$ if and only if $|D|=1$,
\item  $k(B)=2$ if and only if $|D|=2$,
\item if the Alperin--McKay conjecture holds for $B$ and $k(B)=3$ then $|D|=3$,
\item if the Alperin--McKay conjecture holds for $B$ and $k(B)=4$ then $|D|\in \{4,5\}$,
\item if $l(B)=1$ and $k(B)=5$ then $D\cong \CCC_5, \QQQ_8, \DDD_8$.
\end{enumerate}
\end{thm}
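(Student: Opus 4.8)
The plan is to use that $l(B)=1$ collapses all the modular representation data onto a single Brauer character. Write $\varphi$ for the unique element of $\IBr(B)$. Every $\chi\in\Irr(B)$ restricts on $p$-regular classes to a multiple of $\varphi$, so $d_\chi:=\chi(1)/\varphi(1)$ is a positive integer, and these five numbers are exactly the decomposition numbers of $B$. Since $B$ has a $1\times 1$ Cartan matrix with entry $|D|$, this gives
\[
|D|=\sum_{\chi\in\Irr(B)}d_\chi^2 .
\]
Comparing $p$-parts in $\chi(1)=d_\chi\,\varphi(1)$, and using that $\varphi(1)_p=|G|_p/|D|$, one sees that $p^{h(\chi)}$ exactly divides $d_\chi$, where $h(\chi)$ is the height. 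So, writing $d_i=p^{h_i}m_i$ with $p\nmid m_i$, I get $|D|=p^d=\sum_{i=1}^5 p^{2h_i}m_i^2$ and in particular $|D|\ge 5$.

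First I would dispose of the case $D$ abelian. By the (now unconditional) Brauer Height Zero Theorem, $D$ is abelian precisely when every $h_i=0$, i.e. every $d_i$ is prime to $p$. For $D$ cyclic, the classification of blocks with cyclic defect groups forces $k(B)=|D|$ whenever $l(B)=1$, so $|D|=5$ and $D\cong\CCC_5$; this is also where $\CCC_7$ drops out, since a cyclic block with $l(B)=1$ and $|D|=7$ has $k(B)=7\neq 5$ (consistent with $\CCC_7$ appearing in Theorem~\ref{thm:A} only when $l(B)>1$). The remaining (finitely many, small) non-cyclic abelian candidates are excluded by the known classification of blocks with those defect groups, where $l(B)=1$ makes the block nilpotent and hence $k(B)=k(D)=|D|>5$. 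Thus the abelian case contributes only $\CCC_5$.

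The substance lies in the nonabelian case, where at least one $h_i\ge 1$ and $k_0(B)\le 4$. Reducing the displayed equation modulo $p$ annihilates the positive-height terms, giving $\sum_{h_i=0}m_i^2\equiv 0\pmod p$ with each summand prime to $p$. For $p=2$, using that odd squares are $\equiv 1\pmod 8$ together with $|D|\ge 8$, this congruence forces $k_0(B)=4$ with a single character of height one. I would then argue that $p=2$ and $|D|=8$: the order-$8$ nonabelian groups are $\DDD_8$ and $\QQQ_8$, both of which have exactly five conjugacy classes and genuinely occur (e.g. as defect groups of nilpotent blocks with $k(B)=5$). Since the $p$-groups with exactly five conjugacy classes are precisely $\CCC_5,\DDD_8,\QQQ_8$, this matches the target list.

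The hard part is that the numerical data alone are \emph{not} decisive: the identities $11=2^2+2^2+1^2+1^2+1^2$ and $16=3^2+1^2+1^2+1^2+2^2$ solve the sum-of-squares equation with admissible heights, so neither $\CCC_{11}$ nor the nonabelian groups of order $16$ (nor $\CCC_{25}$, etc.) are ruled out by counting and congruences. Excluding all nonabelian defect groups of $2$-power order $\ge 16$ and of odd order is the delicate point, and I expect it to require genuine block theory rather than arithmetic: the classification of blocks with cyclic defect groups (which kills $\CCC_{11},\CCC_{25},\dots$ by pinning $k(B)=|D|$), lower bounds on $k(B)$ in terms of $|D|$ of Héthelyi--Külshammer type to cap the prime $p$, and the known classification of $2$-blocks of small defect to finish $|D|=8$. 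Assembling these external inputs into a uniform exclusion of the large nonabelian cases is where I anticipate the real work.
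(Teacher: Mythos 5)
The paper does not reprove any part of this theorem: its proof is a list of citations, namely (i) is \cite[Theorem 3.18]{N98} (Brauer--Nesbitt), (ii) is \cite{Bra}, (iii) is \cite[Theorem 4.2]{KNST}, (iv) is \cite{MRS23}, and (v) is \cite{CK92}. Your proposal addresses only part (v) and says nothing about (i)--(iv), which are substantial theorems in their own right (with (iii) and (iv) conditional on the Alperin--McKay conjecture), so even a complete argument for (v) would not prove the statement as given.

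For part (v) itself, your setup is correct: with $l(B)=1$ the Cartan matrix is $(|D|)$, so $|D|=\sum_{\chi}d_\chi^2$ and $p^{h(\chi)}$ exactly divides $d_\chi$, and the cyclic case is rightly killed by \cite{dadecyclic}, since $l(B)=1$ forces $k(B)=|D|$ there. But the decisive step --- showing $|D|\in\{5,8\}$ --- is missing, and the external inputs you invoke to close it are not available. First, ``$l(B)=1$ with $D$ abelian makes $B$ nilpotent'' is an open problem (a consequence of Alperin's weight conjecture), not a known theorem; and, for example, $D\cong\CCC_5\times\CCC_5$ survives your congruences, since $25=3^2+2^2+2^2+2^2+2^2$ with every $m_i$ prime to $5$, while no classification of blocks with that defect group exists to be quoted. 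Second, a H\'ethelyi--K\"ulshammer-type lower bound for arbitrary blocks is precisely the conjecture this paper's own introduction records as open (``neither of these conjectures has even been reduced to simple groups''), so it cannot be used to cap the prime. Third, for nonabelian $D$ of $2$-power order at least $16$ (your own example $16=3^2+1^2+1^2+1^2+2^2$) and of odd order (e.g.\ $27=2^2+2^2+1^2+3^2\cdot 1^2+3^2\cdot 1^2$, consistent with $k_0(B)\equiv 0 \bmod 3$), there is no unconditional classification of blocks of such defect to appeal to; parts of the small-defect literature in fact rely on \cite{CK92} itself, so the appeal risks circularity. What actually closes the gap in \cite{CK92} is ring-theoretic, not arithmetic: when $l(B)=1$ the basic algebra of $B$ is a symmetric local algebra whose center has dimension $k(B)=5$, and Chlebowitz--K\"ulshammer classify such algebras to force the Cartan invariant, that is $|D|$, to equal $5$ or $8$; only then does an argument like Lemma \ref{lem:special 2-group} yield $\CCC_5$, $\DDD_8$, $\QQQ_8$. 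That structural bound on $|D|$ is the missing idea, and you correctly sensed its absence when you conceded that ``the numerical data alone are not decisive.''
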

\begin{proof} 
Part (i) is \cite[Theorem 3.18]{N98}, (ii) is \cite{Bra}, (iii) is \cite[Theorem 4.2]{KNST}, (iv) is \cite{MRS23} and (v) appears in \cite{CK92}.
\end{proof}


\begin{lem}\label{lem:special 2-group}
Suppose that $D$ is a $2$-group such that $D'=\zent D$ and assume $D/D'\cong\CCC_2\times \CCC_2$. Then $D\cong \QQQ_8$ or $\DDD_8$.
\end{lem}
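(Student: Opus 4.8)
The plan is to pin down the order of $D$ exactly, showing $|D|=8$, after which the classification of groups of order $8$ finishes the proof since $D$ is manifestly nonabelian. The first easy observation is that $D$ is nonabelian: if $D$ were abelian we would have $\zent D = D$ and $D'=1$, so the hypothesis $D'=\zent D$ would force $D=1$, contradicting $D/D'\cong \CCC_2\times\CCC_2$. Moreover, since $D/\zent D = D/D' \cong \CCC_2\times\CCC_2$ is abelian, $D$ has nilpotency class $2$. I will use this throughout via the class-$2$ commutator calculus: for $x,y,z\in D$ the commutator $[x,y]$ depends only on the cosets $x\zent D,\,y\zent D$, and $[xy,z]=[x,z][y,z]$, $[x,yz]=[x,y][x,z]$.

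Next I would choose $a,b\in D$ whose images generate the Klein four-group $D/\zent D$, and set $c:=[a,b]$. Since $\bar a,\bar b$ generate $D/\zent D$ and $[a,a]=[b,b]=1$, $[b,a]=[a,b]^{-1}$, bilinearity shows that every commutator in $D$ is a power of $c$; hence $D'=\langle c\rangle$ is cyclic, and it is central because $D'=\zent D$. The crux of the argument is to bound the order of $c$. Because $\bar a$ has order dividing $2$ in $D/\zent D$, we have $a^2\in\zent D$, so $[a^2,b]=1$. On the other hand, expanding with $[a^2,b]=[a,b]^a[a,b]$ and using that $c$ is central gives $[a^2,b]=[a,b]^2=c^2$. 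Therefore $c^2=1$, and since $D$ is nonabelian $c\neq 1$, so $|c|=2$. Consequently $|\zent D|=|D'|=2$ and $|D|=|D/D'|\cdot|D'|=4\cdot 2=8$.

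To conclude, a nonabelian group of order $8$ is isomorphic to $\DDD_8$ or $\QQQ_8$, which is exactly the assertion of the lemma. As for the main obstacle: there is no deep difficulty here, and the single place that demands care is the class-$2$ commutator bookkeeping—namely verifying that $D'=\langle[a,b]\rangle$ is cyclic and that the centrality of $a^2$ forces $c^2=1$. Everything else is routine manipulation of a small $2$-group, so I expect the write-up to be short.
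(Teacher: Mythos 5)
Your proof is correct, but it takes a genuinely different route from the paper's. The paper's argument is a two-line appeal to Taussky's theorem: a $2$-group $D$ with $D/D'\cong\CCC_2\times\CCC_2$ must be dihedral, semidihedral or generalized quaternion; for all such groups $|\zent D|=2$, so the hypothesis $D'=\zent D$ forces $|D|=|D/D'|\cdot|D'|=8$, and at order $8$ only $\DDD_8$ and $\QQQ_8$ occur (semidihedral groups start at order $16$). You instead avoid Taussky entirely and derive $|D|=8$ by hand: since $D/\zent D=D/D'$ is abelian, $D$ has class $2$, so commutators are bilinear and central; picking $a,b$ lifting generators of $D/\zent D$ gives $D'=\langle [a,b]\rangle$, and the relation $1=[a^2,b]=[a,b]^2$ (valid because $a^2\in\zent D$ and $[a,b]$ is central) pins $|D'|=|\zent D|=2$. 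All the steps check out, including the nontrivial ones: nonabelianness of $D$, the identity $[a^2,b]=[a,b]^a[a,b]=[a,b]^2$, and $[a,b]\neq 1$. What each approach buys: the paper's proof is shorter but rests on a nontrivial classification result (Taussky) used as a black box; yours is completely self-contained, using only class-$2$ commutator calculus and the elementary classification of groups of order $8$, which makes it arguably preferable in a setting where one wants the lemma to stand on elementary foundations. A minor stylistic remark: you could streamline the middle slightly by noting that once $a^2,b^2\in\zent D$ and $[a,b]^2=1$, the group $D$ is generated by $a$, $b$ and $\zent D=\langle [a,b]\rangle$, from which $|D|\le 8$ is immediate; but this is the same computation in different clothing.
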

\begin{proof}
Since $D/D'\cong \CCC_2 \times \CCC_2$ then $D$ is dihedral, semidihedral or generalized quaternion. Thus $|\zent D|=2$ and $|D|=8$, as desired.
\end{proof}


We end the preliminaries by stating Brauer's famous character counting formula. Throughout this work we write $n_q$ for the largest power of a prime $q$ that divides the natural number $n$.

\begin{lem}[Brauer's formula]\label{lem:brauer formula}
Let $G$ be a finite group, and $B$ a $p$-block of $G$. Let $\{x_1,\dots,x_t\}$ be representatives of the non-central $G$-conjugacy classes of $p$-elements. Then
$$k(B)=l(B)|\zent G|_p+\sum_{i=1}^{t}\sum_{b\in\Bl(\cent G {x_i})\atop b^G=B}l(b).$$
\end{lem}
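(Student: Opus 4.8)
The plan is to deduce this from Brauer's theory of generalized decomposition numbers, following the development in \cite{N98}. Recall that for a $p$-element $u$ of $G$, a character $\chi\in\Irr(G)$, and $\phi\in\IBr(\cent{G}{u})$, the generalized decomposition numbers $d^{u}_{\chi\phi}$ are defined by
$$\chi(uv)=\sum_{\phi\in\IBr(\cent{G}{u})}d^{u}_{\chi\phi}\,\phi(v)$$
for every $p$-regular $v\in\cent{G}{u}$, where $uv$ is the decomposition of an element into its commuting $p$- and $p'$-parts. First I would invoke the two structural facts about these numbers. The first is that if $u$ ranges over representatives of the $G$-classes of $p$-elements and, for each such $u$, $\phi$ ranges over $\IBr(\cent{G}{u})$, then the matrix $\big(d^{u}_{\chi\phi}\big)$ with rows indexed by $\Irr(G)$ and columns indexed by the pairs $(u,\phi)$ is square and nonsingular; in particular $k(G)=\sum_{u}l(\cent{G}{u})$.

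The second fact is that this matrix is ``block diagonal'' with respect to the Brauer correspondence: $d^{u}_{\chi\phi}=0$ unless $\mathrm{bl}(\phi)^{G}=\mathrm{bl}(\chi)$, where $\mathrm{bl}(\phi)$ denotes the block of $\cent{G}{u}$ containing $\phi$ and $\mathrm{bl}(\chi)$ the block of $G$ containing $\chi$ (note that $u$ is a central $p$-element of $\cent{G}{u}$, so $\mathrm{bl}(\phi)^{G}$ is defined). Restricting the nonsingular square matrix to the rows indexed by $\Irr(B)$ and the columns indexed by the pairs $(u,\phi)$ with $\mathrm{bl}(\phi)^{G}=B$ produces a square nonsingular submatrix, whence
$$k(B)=\#\{(u,\phi)\ :\ u\ \text{a }p\text{-element up to conjugacy},\ \phi\in\IBr(\cent{G}{u}),\ \mathrm{bl}(\phi)^{G}=B\}.$$

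It then remains to organize this column count according to whether $u$ is central, which is pure bookkeeping. If $u\in\zent G$ is a $p$-element, then $\cent{G}{u}=G$ and block induction from $G$ to $G$ is the identity, so $\mathrm{bl}(\phi)^{G}=B$ forces $\phi\in\IBr(B)$; this contributes exactly $l(B)$ to the count. Since the central $p$-elements are precisely the elements of the (unique) Sylow $p$-subgroup of the abelian group $\zent G$, there are $|\zent G|_{p}$ of them, and they lie in distinct $G$-classes, yielding the term $l(B)|\zent G|_{p}$. For the remaining, non-central representatives $x_{1},\dots,x_{t}$, grouping the admissible $\phi\in\IBr(\cent{G}{x_{i}})$ by the block of $\cent{G}{x_i}$ they belong to gives $\sum_{i=1}^{t}\sum_{b\in\Bl(\cent{G}{x_i}),\,b^{G}=B}l(b)$. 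Adding the two contributions produces the stated formula.

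The genuine content lies entirely in Brauer's two structural facts quoted in the first two paragraphs: the squareness and nonsingularity of the full generalized decomposition matrix, and its compatibility with the Brauer correspondence on blocks. These are standard but nontrivial, and I expect that proving them from scratch would be the main obstacle; once they are cited from \cite{N98}, everything else is the elementary split of the resulting count into central and non-central $p$-elements.
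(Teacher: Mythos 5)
Your proposal is correct, and it takes essentially the same route as the paper: the paper offers no argument of its own but simply cites \cite[Theorem 5.12]{N98}, and your argument is precisely the standard proof of that result, namely Brauer's second main theorem (giving the vanishing $d^{u}_{\chi\varphi}=0$ unless $\mathrm{bl}(\varphi)^{G}=\mathrm{bl}(\chi)$) combined with the squareness and nonsingularity of the full generalized decomposition matrix, which forces each block-diagonal piece to be square and nonsingular, so that $k(B)$ counts the admissible pairs $(u,\varphi)$. Your final bookkeeping, with each of the $|\zent G|_{p}$ central $p$-element classes contributing $l(B)$ since $\cent G u=G$ there, is likewise the standard way the cited formula is stated and proved.
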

\begin{proof}
See \cite[Theorem 5.12]{N98}.
\end{proof}

\subsection{The reduction theorem}

We now show that our main result can be proved assuming the following theorem on affine primitive permutation groups, whose proof can be found in Section \ref{sec:proof of thm reduced}.

\begin{thm}\label{thm:reduced}
Let $G$ be a finite group, $p\geq 5$ a prime, $D\in\Syl_p(G)$, and assume $D$ is a minimal normal subgroup of $G$ with $p$-complement $K$. Let $Z=\zent{G}$ and suppose $Z=\oh{p'}G$. Assume that $G/Z$ is an affine primitive permutation group of rank $2, 3$ or $4$ on $D$. Let $\lambda\in\Irr(Z)$. If $|\Irr(G|\lambda)|=5$ we have $|D|=5,7$.
\end{thm}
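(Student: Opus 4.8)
The plan is to turn the statement into an equation counting projective characters of point stabilizers, dispose of the cyclic case by arithmetic, and push everything else onto the classification of low-rank affine groups.

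\textbf{Structure.} Since $D$ is a minimal normal $p$-subgroup it is elementary abelian, say $|D|=p^{n}$, and as $K$ is a $p$-complement we have $G=D\rtimes K$ with $Z\leq\zent K$ (as $Z$ is a $p'$-group, $Z\cap D=1$). First I would check $\cent K D=Z$: one has $\cent G D=D\times\oh{p'}{\cent G D}$, where $\oh{p'}{\cent G D}\normal G$ is a $p'$-group, hence contained in $\oh{p'}G=Z$, giving $\cent K D=Z$. Thus $H:=K/Z$ embeds in $\GL(D)=\GL_{n}(p)$, acts irreducibly, and $G/Z=D\rtimes H$ is the given affine primitive group. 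Writing $r\in\{2,3,4\}$ for the rank, $H$ has $r-1$ nonzero orbits on $D$, and by Brauer's permutation lemma the same number on the dual module $\Irr(D)$; so $\Irr(D)$ splits into $r$ $H$-orbits, one of which is $\{1_{D}\}$.

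\textbf{The counting identity.} For $\theta\in\Irr(D)$ let $K_{\theta}$ be its stabilizer in $K$, so its inertia group in $G$ is $D\rtimes K_{\theta}$ and $H_{\theta}:=K_{\theta}/Z$ is the stabilizer of $\theta$ in $H$. As $\theta$ is a $K_{\theta}$-invariant linear character of the abelian normal subgroup $D$ of the \emph{split} extension $D\rtimes K_{\theta}$, the map $dk\mapsto\theta(d)$ is a linear extension $\hat\theta$ of $\theta$; Gallagher's theorem then gives a bijection $\Irr(D\rtimes K_{\theta}\mid\theta)\to\Irr(K_{\theta})$, $\chi\mapsto\chi/\hat\theta$. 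Since $\hat\theta$ is trivial on $Z$ and $Z=\zent G$, this bijection preserves the $Z$-central character, and combining it with the Clifford correspondence over $D$ I obtain
\[
5=|\Irr(G\mid\lambda)|=|\Irr(K\mid\lambda)|+\sum_{i=1}^{r-1}|\Irr(K_{\theta_{i}}\mid\lambda)|,
\]
the sum running over nonzero $H$-orbit representatives on $\Irr(D)$. Here $|\Irr(K_{\theta}\mid\lambda)|$ equals the number of irreducible $\beta_{\theta}$-projective characters of $H_{\theta}$, where $\beta_{\theta}\in H^{2}(H_{\theta},\CC^{\times})$ is the restriction of the single class $\beta_{0}\in H^{2}(H,\CC^{\times})$ produced from $\lambda$ by the central extension $Z\to K\to H$. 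Each summand is at least $1$, so $r$ positive summands sum to $5$; in particular $|\Irr(K\mid\lambda)|\leq4$.

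\textbf{The cyclic case $n=1$.} Here $H\leq\GL_{1}(p)$ is cyclic of order $d\mid p-1$, with trivial Schur multiplier, so every $\beta$ is trivial and all counts are ordinary. Then $|\Irr(K\mid\lambda)|=|\Irr(H)|=d$, while each nonzero orbit has size $d$ with trivial stabilizer and contributes $1$; there are $(p-1)/d$ such orbits, and the rank bound forces $(p-1)/d\leq3$. The identity becomes $d+(p-1)/d=5$ with $(p-1)/d\leq3$, whose only solutions are $(p,d)=(5,4),(7,3),(7,2)$, giving $|D|=5$ or $7$; conversely these configurations occur and realize $\CCC_{5}$ and $\CCC_{7}$.

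\textbf{The non-cyclic case and the main obstacle.} It then remains to show that for $n\geq2$ (so $|D|=p^{n}\geq25$) the right-hand side is always at least $6$. For this I would invoke the classification of affine primitive permutation groups of rank at most $4$: Hering's theorem in the rank-$2$ ($2$-transitive) case, the Liebeck and Kantor--Liebler determination of the affine rank-$3$ groups, and the corresponding rank-$4$ classification. For each infinite family and each sporadic entry with $p\geq5$ and $n\geq2$ one argues that $|\Irr(K\mid\lambda)|$ together with the point-stabilizer contributions already exceeds $5$. The genuine difficulty is that these are counts of \emph{projective} characters for the cocycle $\beta_{0}$ attached to $\lambda$: a nontrivial $\beta_{0}$ can collapse many ordinary characters, so $|\Irr(H)|$ and $|\Irr(H_{\theta})|$ are not directly usable as lower bounds. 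The crux is therefore to control the Schur multipliers of $H$ and of its point stabilizers and to certify that at least five $\beta_{0}$-regular classes survive; this is precisely what the preliminary projective-character results of Section~\ref{sec:lemmas} are meant to provide, and where the case-by-case \emph{ad hoc} arguments concentrate.
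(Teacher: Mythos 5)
Your structural setup is sound and is essentially the paper's own counting mechanism in dual form: your identity $5=|\Irr(K\mid\lambda)|+\sum_{i}|\Irr(K_{\theta_i}\mid\lambda)|$ is the character-theoretic translation of Brauer's formula (Lemma \ref{lem:brauer formula}) combined with Lemma \ref{lem:l(B)}, and the paper itself exploits exactly this dual picture in Lemma \ref{lem:contradiction}. Your treatment of the case $n=1$ is correct and recovers the degree-$p$ part of Proposition \ref{pro:semilinear} by a clean arithmetic argument. But everything for $n\geq 2$ is left as a plan, and that plan is where the entire content of the theorem lives: it occupies Propositions \ref{pro:p2} through \ref{pro:imp4-3} of the paper, including GAP library computations for the sporadic degrees $5^4,7^4,7^3$, wreath-product and Diophantine square arguments (via Mordell) for the imprimitive cases, and Higgs' theorem to force solvability when the rank is $4$. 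Saying that one ``argues case-by-case that the count exceeds $5$'' is not a proof of any of these cases.

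More importantly, your guiding claim --- that for $n\geq 2$ the right-hand side is \emph{always at least} $6$ --- cannot be established by the counting identity alone, so the strategy as stated would fail even if executed diligently. In the semilinear case $K/Z\leq\Gamma(5^2)$, the orbit and projective-character constraints are consistent with a total of exactly $5$: the paper's Proposition \ref{pro:semilinear} shows that for $p^d=5^2$ one is forced only to $(m,l)\in\{(2,3),(3,2)\}$ with $k(B)=m+l=5$ and every nontrivial contribution equal to $1$. Excluding this configuration is genuinely block-theoretic: Proposition \ref{pro:p2} shows that some subsection $(u,b)$ must have $l(b)>1$ when $d=2$, and its proof passes through block domination to $\cent{G}{u}/\langle u\rangle$, the inertial-index-one consequence of \cite[Theorem 11.13]{N98} for cyclic defect, the Frobenius structure of $G/ZD$ acting on $D$ (whence cyclic or generalized quaternion Sylow subgroups with trivial Schur multiplier), an extension of $\lambda$, and finally the nonexistence of a group with five conjugacy classes and a normal Sylow $p$-subgroup of order $p^2$, $p\geq 5$. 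None of this is visible from your identity, and no purely numerical lower bound of the form ``$\mathrm{RHS}\geq 6$'' can replace it. Your closing observation that the difficulty is to control the cocycle $\beta_0$ is accurate as a diagnosis, but the paper resolves it with specific tools (Lemma \ref{lem:schur}, Lemma \ref{lem:demeyerjanusz}, Theorem \ref{lem:demeyerjanusz2}, plus $\SL(2,3)$ and $\SL(2,5)$ normal subgroup analyses in Proposition \ref{pro:d=2 rank 2}) applied case by case; a proof attempt must actually carry these out.
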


Next we prove that Theorem \ref{thm:reduced} implies Theorem \ref{thm:A} when $D\normal G$.

\begin{thm}\label{thm:reduction}
Let $G$ be a finite group, $B$ a $p$-block of $G$ with $k(B)=5$. Let $D$ be a defect group of $B$ and assume $D\normal G$. Then $D$ is isomorphic to ${\sf{C}}_5,{\sf{C}}_7, {\sf{D}}_8$ or $ {\sf{Q}}_8$.
\end{thm}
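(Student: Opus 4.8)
The plan is to reduce, via standard Clifford theory, to the affine primitive setting of Theorem \ref{thm:reduced}, after disposing of the case $l(B)=1$ and of the prime $p=2$ separately. Since $D\normal G$ is a $p$-group it lies in $\oh p G$, while $\oh p G$ is contained in every defect group; hence $D=\oh p G$. If $l(B)=1$, then Theorem \ref{thm:kb small}(v) already gives $D\cong\CCC_5,\QQQ_8$ or $\DDD_8$, so I may assume $l(B)\geq 2$. Writing $L=\oh{p'}G$, the block $B$ covers a $G$-orbit of characters of the $p'$-group $L$; choosing $\theta\in\Irr(L)$ below $B$, passing to its inertia group through the Fong--Reynolds correspondence, and then applying a character-triple isomorphism to make $\theta$ central, reduces us to the situation $\oh{p'}G=\zent G=:Z$, with $B$ lying over a single $\lambda\in\Irr(Z)$ and $k(B)=|\Irr(G\mid\lambda)|$, all the relevant invariants ($k(B)$, $l(B)$, and the isomorphism type of $D$) being preserved. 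At this point $DZ=D\times Z\normal G$, and the conjugation action of $G$ on $\Irr(D)$ factors through $H:=G/DZ$.

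For $p=2$ I would argue directly. Counting the characters of $\Irr(G\mid\lambda)$ that lie over the linear characters of $D$, i.e. over $\Irr(D/D')$, shows that the $\FF_2$-space $\Irr(D/D')$ carries an $H$-action with so few orbits that $k(B)=5$ forces $|D:D'|=4$, so that $D/D'\cong\CCC_2\times\CCC_2$: a cyclic $D$ would give $k(B)=|D|$, which is never $5$, while $|D:D'|\geq 8$ already produces more than five characters over $\lambda$. A further count of the characters lying over the non-linear irreducibles of $D$ then pins down $D'=\zent D$, and Lemma \ref{lem:special 2-group} yields $D\cong\DDD_8$ or $\QQQ_8$.

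For $p$ odd I would show that $D$ must be abelian and self-centralizing modulo $Z$, so that $G/Z$ becomes an affine primitive permutation group on $D$. A nonabelian $D$ is excluded because it would force more than five irreducible characters over $\lambda$; once $D$ is abelian we have $\cent G D\supseteq DZ$, and $k(B)=5$ forbids $\cent G D$ from being strictly larger, giving $\cent G D=DZ$. Hence $DZ/Z\cong D$ is self-centralizing and minimal normal in $G/Z$, with $D\in\Syl_p(G)$ and $G=D\rtimes K$. The rank of this primitive group is the number of $H$-orbits on $\Irr(D)$: the trivial character $1_D$ accounts for exactly the $l(B)\geq 2$ characters of $\Irr(G\mid\lambda)$ that contain $D$ in their kernel, while each remaining $H$-orbit contributes at least one further character, so the number of nontrivial orbits is at most $5-l(B)\leq 3$ and the rank lies in $\{2,3,4\}$. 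For $p\geq 5$, Theorem \ref{thm:reduced} then gives $|D|\in\{5,7\}$, i.e. $D\cong\CCC_5$ or $\CCC_7$; the remaining prime $p=3$ is eliminated by inspecting the finitely many irreducible subgroups $H\leq\GL_d(3)$ of rank at most $4$ and checking that none produces exactly five characters over $\lambda$.

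The main obstacle is the structural reduction for odd $p$: proving, from $k(B)=5$ alone, that $D$ is abelian, self-centralizing modulo $Z$, and Sylow, so that the affine primitive hypothesis of Theorem \ref{thm:reduced} is genuinely in force. The clean identification $k(B)=|\Irr(G\mid\lambda)|$ after the Clifford reductions, and the $p=2$ character counts isolating $|D:D'|=4$ and $D'=\zent D$, are the other delicate points.
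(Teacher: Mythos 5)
Your overall route---Fong--Reynolds plus a character-triple isomorphism to reach $Z=\zent G=\oh{p'}G$, then an orbit count bounding the rank by $4$ and an appeal to Theorem \ref{thm:reduced}---is indeed the paper's skeleton, but two load-bearing hypotheses of Theorem \ref{thm:reduced} are asserted rather than proved. First, $D\in\Syl_p(G)$: you deduce it from $\cent G D=DZ$, which is a non sequitur. In $G=V\rtimes\GL_2(p)$ with $V$ elementary abelian of order $p^2$ one has $\cent G V=V$ while $p$ divides $|\GL_2(p)|$, so a self-centralizing normal $p$-subgroup need not be Sylow; nor is a normal defect group automatically Sylow (defect-zero blocks have $D=1\normal G$). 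The paper obtains the Sylow property from Reynolds' theorem \cite[Theorem 6]{reynolds}, a replacement theorem for blocks with normal defect groups, and this is also what makes the permutation action of $G/Z$ on the cosets of $K/Z$ faithful in Step 5. Second, minimal normality of $D$ (equivalently, irreducibility of $K$ on $D$, which is what makes $K/Z$ maximal and the action primitive): you state it without argument. The paper proves it by passing, for $1<N<D$ normal in $G$, to the block $\overline{B}$ of $G/N$ dominated by $B$, using the classifications for $k(\overline{B})\leq 4$ (Theorem \ref{thm:kb small}) to force $D/N\cong\CCC_5$ and $k(\overline{B})=4$, and then observing that the $5^d-5$ characters in $\Irr(D)\setminus\Irr(D/N)$ would form a single $G$-orbit of length divisible by $5$, impossible since $D\sbs G_\theta$ is a Sylow $5$-subgroup. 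The same domination device---not your unproved claim that a nonabelian $D$ ``forces more than five irreducible characters over $\lambda$''---is how the paper handles nonabelian $D$: applying it with $N=\zent D$ and $N=D'$ gives $D/\zent D\cong\CCC_2\times\CCC_2\cong D/D'$ with $D'=\zent D$, whence $p=2$ and Lemma \ref{lem:special 2-group}.

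Your direct counts for $p=2$ and your disposal of $p=3$ also do not hold up. Orbit counting only yields $k(B)\geq l(B)+m$, where $m$ is the number of nontrivial $G$-orbits on $\Irr(D)$; if $G$ induces a transitive linear group on $\Irr(D/D')\setminus\{1_D\}$ (for instance $\GL_3(2)$ on $\FF_2^3$), then $|D:D'|\geq 8$ contributes a single extra orbit, so it does not ``produce more than five characters,'' and your sketch never rules out the abelian case $D\cong\CCC_2\times\CCC_2$, where Lemma \ref{lem:special 2-group} is unavailable. The paper instead notes $k_0(B)=k(B)=5$ for abelian $D$ (Reynolds again) and excludes $p\in\{2,3\}$ by Landrock's congruences \cite[Corollaries 1.3 and 1.6]{landrock}; this is precisely why Theorem \ref{thm:reduced} is stated only for $p\geq 5$. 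Finally, for $p=3$ there are \emph{not} finitely many irreducible subgroups $H\leq\GL_d(3)$ of rank at most $4$ to inspect as $d$ varies: the relevant classifications (Passman, Liebeck, Foulser) contain infinite families, and taming them is the content of Section \ref{sec:proof}; your one-line inspection would amount to redoing that entire analysis at a prime the paper deliberately eliminated beforehand. The parts of your proposal that do work---the identification $k(B)=|\Irr(G|\lambda)|$ after the reductions, $l(B)=|\Irr(K|\lambda)|$, the shortcut via Theorem \ref{thm:kb small}(v) when $l(B)=1$, and rank $\in\{2,3,4\}$ from $l(B)\geq 2$---coincide with the paper's Steps 3--5, so the gaps above sit exactly at the points where the paper's proof does its real work.
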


\begin{proof}  Write $|D|=p^d$.

\textit{Step 1. We may assume that $D\in{\rm Syl}_p(G)$.  Further, we may assume that $D$ is a minimal normal subgroup of $G$ with $p\neq2, 3$. }

The first part is \cite[Theorem 6]{reynolds}. Suppose that $N\normal G$ with  $1<N<D$. Let  $\overline{B}$ be a block of $G/N$ dominated by $B$ with defect group $D/N$ (see \cite[Theorem 9.9]{N98}). Then $1<k(\overline{B})< 5$ by \cite[Theorem 3.18]{N98} and \cite[Theorem 6.10]{N98}.  
By Theorem \ref{thm:kb small}, $D/N$ is isomorphic to $\CCC_2, \CCC_3,\CCC_2\times \CCC_2,   \CCC_4$ or $\CCC_5$, and hence $p\leq 5$.  

Suppose that $D$ is non-abelian. Then by setting $N=\zent D$ we have that necessarily $D/\zent D\cong \CCC_2\times \CCC_2$. Since $D'\sbs\Phi(D)\sbs N$, arguing similarly, we have $N=D'$, so we are under the hypotheses of Lemma \ref{lem:special 2-group} and thus $D$ is isomorphic to either  $\DDD_8$ or $\QQQ_8$.

Thus we may assume that $D$ is abelian. By \cite[Theorem 9]{reynolds}, $k_0(B)=k(B)=5$. By \cite[Corollary 1.3]{landrock} and \cite[Corollary 1.6]{landrock} we have that $p\not=2,3$. Then $D/N\cong {\sf C}_5$  and $k(\overline{ B})=4$ by Theorem \ref{thm:kb small}. Since  all the irreducible characters in ${\rm Irr}(D)\setminus{\rm Irr}(D/N)$ lie under the same irreducible character of $B$, they are $G$-conjugate. Let $\theta\in\Irr(D)\setminus\Irr(D/ N)$. Then $|G:G_\theta|=5^d-5$ which is divisible by $5$, a contradiction with the fact that $D$ is a Sylow subgroup.  Hence  we may assume that $D$ is  a minimal normal subgroup of $G$. In particular, $D$ is abelian and again by \cite[Corollary 1.3]{landrock} and \cite[Corollary 1.6]{landrock} we may assume $p\geq 5$.

\smallskip

\textit{Step 2. We may assume that $|\zent G|_p=1$.}

\smallskip

Let $Z\in\Syl_p(\zent G)$ and suppose that $1<Z$.  Then $D=Z$ by Step 1, and $G$ has a normal $p$-complement $K$, that is, $G=K\times Z$. In this case, by \cite[Theorem 10.20]{N98}, $\Irr(B)=\Irr(K\times Z|\lambda)$ for some $\lambda\in\Irr(K)$ (which is $G$-invariant because $Z$ is central). Therefore $|\Irr(B)|=|\Irr(K\times Z|\lambda)|=|\Irr(Z)|$ which forces $Z=D\cong \CCC_5$.

\smallskip

\textit{Step 3. We may assume that $\zent G=\oh{p'}G$.}

\smallskip

Let $N={\rm \textbf{O}}_{p'}(G)$ and let $\lambda\in{\rm Irr}(N)$ be such that if $b=\{\lambda\}\in{\rm Bl}(N)$ then $B$ covers $b$. By the Fong--Reynolds correspondence \cite[Theorem 9.14]{N98} we may assume that $b$ is $G$-invariant, and hence $\lambda$ is $G$-invariant. Now $(G,N,\lambda)$ is an ordinary-modular character triple and by \cite[Problem 8.13]{N98} there exists an isomorphic ordinary-modular character triple $(H,M,\varphi)$ with $M$ a $p'$-group and $\varphi$ linear and faithful, in particular $M$ is central. Since $G/N\cong H/M$ we have that $M=\oh {p'}H$ and by \cite[Theorem 10.20]{N98} applied twice we obtain that
$$k(B)=|{\rm Irr}(G|\lambda)|=|{\rm Irr}(H|\varphi)|=k(B_1),$$ where $B_1$ is some $p$-block of $H$. Moreover, if $Q/M\in{\rm Syl}_p(H/M)$, since $M\subseteq\zent H$, we have that $Q=Q_1\times M$ where $Q_1=\oh p H$, so $Q_1$ is a defect group of $B_1$ and
$$D\cong DN/N\cong Q/M\cong Q_1.$$

Hence may assume that $\lambda$ is linear and faithful, in particular $N$ is central and by Step 2 we have $N=\zent G$.

\smallskip
\textit{Step 4. Let $\{x_1,\ldots,x_t\}$ be a set of representatives of the  $G$-conjugacy classes of $p$-elements of $G$. Then $t\in\{2,3,4\}$. }

\smallskip

We may assume that $x_1=1$, so $\{x_2,\ldots,x_t\}$ are a set of representatives of the non-trivial $G$-conjugacy classes of $p$-elements of $G$. By Brauer's formula from Lemma \ref{lem:brauer formula} and Step 2 we have

\begin{equation}\label{formula}
k(B)=l(B)+\sum_{i=2}^t\sum_{b\in \Bl(\cent G {x_i}),\\ b^G=B}l(b).
\end{equation}

Notice that we may assume $l(B)>1$ by Theorem \ref{thm:kb small}. Thus $l(B)\geq 2$ and $b^G=B$ for some $b\in{\rm Bl}(\cent G {x_i})$ by \cite[Theorem 4.14]{N98}, and we have that either $t\in\{2,3,4\}$, concluding the proof of the step.

\bigskip

Before the next step of our proof we fix some notation. Let $Z=\zent G=\oh{p'}G$. By \cite[Theorem 10.20]{N98} we have that $\Irr(B)=\Irr(G|\lambda)$, where $\lambda\in\Irr(Z)$ is the character from Step 3. Let $\hat{\lambda}=1_D\times \lambda\in\Irr(D\times Z)$ and let $b\in{\rm Bl}(D\times Z)$ be the $p$-block containing $\hat{\lambda}$. Notice that $b$ is $G$-invariant since $\hat{\lambda}$ is $G$-invariant.

\bigskip

\textit{Step 5. We have that $G/Z$ is an affine primitive permutation group of rank 2, 3 or 4.}

\smallskip

By  the Schur--Zassenhaus theorem there is $K\leq G$ such that $G=KD$ and $K\cap D=1$, and write $\overline{G}=G/Z$, $\overline{D}=DZ/Z$ and $\overline{K}=K/Z$. We have that $\overline{G}$ acts on $\Omega=\{\overline{K}d \mid d\in D\}$ transitively via the action of right multiplication. Notice that $\overline{K}$ is the stabilizer of the trivial class in $\Omega$. Since $\overline{K}$ is maximal in $\overline{G}$, we have that this action is primitive (see \cite[Corollary 8.14]{I08}, for instance) and $\overline{G}$ is a primitive permutation group.

Since the action of $\overline{K}$ on $\Omega$ has the same number of orbits (and orbit sizes) as the action by conjugation of $\overline{G}$ on $\overline{D}$, by Step 4 it has 2, 3 or 4 orbits. Hence the rank of $\overline{G}$ is 2, 3 or 4, as wanted. 

\medskip

Now we apply Theorem \ref{thm:reduced} to get the desired result. \end{proof}

In the final step of the previous proof we have considered an action on the right cosets of some subgroup. However, from now on, every action considered will be an action by conjugation on some normal subgroup, or on its set of irreducible characters.

We are now ready to prove Theorem \ref{thm:A}, which we restate. Recall that if $B$ is a block of a finite group $G$ with defect group $D$ then the Alperin--McKay conjecture states that $k_0(B)=k_0(b)$ where $b$ is its Brauer correspondent block in $\norm G D$.

\begin{cor}
Let $G$ be a finite group, $B$ a $p$-block of $G$ with $k(B)=5$ and $D$ a defect group of $B$. Assume $k_0(B)=k_0(b)$ where $b\in\Bl(\norm G D)$ is the Brauer correspondent of $B$ in $\norm G D$. Then $D$ is isomorphic to $\CCC_5, \CCC_7, \DDD_8$ or $\QQQ_8$.
\end{cor}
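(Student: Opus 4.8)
The plan is to reduce the general statement (the Corollary) to the normal-defect-group case already settled in Theorem~\ref{thm:reduction}, using the Alperin--McKay hypothesis as the bridge. The key observation is that $k_0(B) = k_0(b)$ lets us transfer information about $B$ to its Brauer correspondent $b \in \Bl(\norm{G}{D})$, where $D$ \emph{is} normal, so Theorem~\ref{thm:reduction} applies directly to $b$ once we know its $k$-value. The obstacle is that Alperin--McKay controls only the \emph{height-zero} characters $k_0$, not the full $k(B)$, so I first need to argue that the heights cause no trouble for $k(B) = 5$.

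First I would pin down the structure of $k(B) = 5$ using the preliminary results. By Theorem~\ref{thm:kb small} I may assume $l(B) > 1$, hence $|D| > 1$. The crucial point is to show all five characters have height zero, i.e.\ $k_0(B) = 5$. For the nonabelian candidates $\DDD_8, \QQQ_8$ one expects positive-height characters, but these are exactly the defect groups we are trying to produce, so I would instead run the argument by cases on whether $D$ is abelian. If $D$ is abelian, then by \cite[Theorem 9]{reynolds} every character of $B$ has height zero, so $k_0(B) = k(B) = 5$; combined with Alperin--McKay this gives $k_0(b) = 5$, and since $b$ is a block of $\norm{G}{D}$ with normal defect group $D$ we get $k(b) \geq k_0(b) = 5$. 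The reverse inequality $k(b) \leq 5$ should follow because the Brauer correspondence does not increase $k$ under these circumstances (or by a direct comparison via the Brauer formula), yielding $k(b) = 5$, and then Theorem~\ref{thm:reduction} applied to $b$ forces $D \cong \CCC_5$ or $\CCC_7$.

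For the nonabelian case I would argue that $D$ must be a $2$-group: by Theorem~\ref{thm:kb small}(iii),(iv) and the block-domination argument as in Step~1 of Theorem~\ref{thm:reduction}, any proper quotient $D/N$ has $k(\overline B) < 5$ and is forced into the small list, which (together with \cite{landrock}) rules out $p = 3$ and, for nonabelian $D$, pins $p = 2$. Then $D/\zent{D} \cong \CCC_2 \times \CCC_2$ as in that step, and Lemma~\ref{lem:special 2-group} gives $D \cong \DDD_8$ or $\QQQ_8$ outright. This case therefore does not even need the reduction to $\norm{G}{D}$ once $D$ is known to be normal --- but since here $D$ need not be normal in $G$, I would instead transfer to $b$: the defect group of $b$ is $D$ and it is normal in $\norm{G}{D}$, so applying Theorem~\ref{thm:reduction} (via its structural analysis) to $b$ identifies $D$.

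The hardest step will be controlling $k(b)$ precisely, since we only receive $k_0(b) = k_0(B)$ from Alperin--McKay and must convert this into full knowledge of $k(b)$. The cleanest route is to observe that Theorem~\ref{thm:reduction} is really a theorem about a block with normal defect group and prescribed $k$-value, so I want to feed it the correct count. In the abelian case the identity $k_0 = k$ on both sides makes this automatic. In the nonabelian ($2$-group) case I would note that $D$ normal in $\norm{G}{D}$ together with $k_0(b) = k_0(B) \le k(B) = 5$ bounds the number of height-zero characters of $b$; combined with the fact that for these specific defect groups the block invariants are rigid, one recovers enough to apply Theorem~\ref{thm:reduction} and conclude $D \in \{\DDD_8, \QQQ_8\}$. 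Assembling the abelian and nonabelian cases then gives $D \cong \CCC_5, \CCC_7, \DDD_8$ or $\QQQ_8$, completing the proof.
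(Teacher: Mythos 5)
There is a genuine gap, and it sits exactly where the statement is hard: converting the Alperin--McKay input $k_0(b)=k_0(B)$ into control of the \emph{full} invariant $k(b)$ when $D$ may be nonabelian. The paper's proof achieves this with an idea your proposal lacks entirely: passing to the Frattini quotient. By \cite[Theorem 9.9]{N98} the correspondent $b$ dominates a block $\overline{b}$ of $\norm G D/\Phi(D)$ with \emph{normal elementary abelian} defect group $D/\Phi(D)$, so $k(\overline{b})=k_0(\overline{b})\leq k_0(b)=k_0(B)\leq 5$ by \cite[Theorem 9]{reynolds}, and a case analysis on $k(\overline{b})\in\{1,\dots,5\}$ through the known classifications (Theorem \ref{thm:kb small}, and Theorem \ref{thm:reduction} applied to $\norm G D/\Phi(D)$ in the case $k(\overline{b})=5$) pins down $D/\Phi(D)$; one then climbs back to $D$ using Dade's cyclic-defect theory \cite{dadecyclic}, Landrock's results \cite{landrock}, and, crucially, Sambale's classification of $2$-blocks with metacyclic defect groups \cite[Theorem 8.1]{sambale}, which is what eliminates $\DDD_{16}$, $\QQQ_{16}$, semidihedral groups, and so on. Your nonabelian case has no substitute for any of this. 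The ``block-domination argument as in Step 1'' is unavailable: there $N\leq D$ was normal in $G$ because $D$ was, whereas here $D$ need not be normal in $G$, so characteristic subgroups of $D$ give you no normal subgroups of $G$ to quotient by; hence your claim that nonabelian $D$ forces $p=2$ is unjustified. Your closing appeal to the ``rigidity'' of block invariants ``for these specific defect groups'' is circular: it presupposes $D\in\{\DDD_8,\QQQ_8\}$, while the whole task is to exclude every other nonabelian $p$-group. Moreover, since for nonabelian $D$ one expects $k_0(B)<5$, you can never hand Theorem \ref{thm:reduction} the hypothesis $k(b)=5$ that it requires, and you propose no replacement.

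The abelian case is also unsound as written. \cite[Theorem 9]{reynolds} applies to blocks whose defect group is \emph{normal}; the paper applies it only to $b$ and $\overline{b}$, never to $B$. The implication ``$D$ abelian $\Rightarrow$ $k_0(B)=k(B)$'' for an arbitrary block is the nontrivial (Kessar--Malle) direction of Brauer's height zero conjecture, a CFSG-level theorem far heavier than anything this paper uses, and in any case not what you cited. Similarly, ``the Brauer correspondence does not increase $k$'' is not a theorem; the correct local statement is simply Reynolds applied to $b$ (where $D$ \emph{is} normal and abelian), giving $k(b)=k_0(b)=k_0(B)$ --- but then you still need $k_0(B)=5$, which is precisely the height-zero input you have not legitimately obtained. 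The paper's route avoids all of this: in its $k(\overline{b})=5$ case, Theorem \ref{thm:reduction} applied to $\norm G D/\Phi(D)$ yields $D/\Phi(D)$ cyclic of order $5$ or $7$, whence $D$ is cyclic and abelian, $k(b)=k_0(b)=5$ follows correctly from Reynolds, and Theorem \ref{thm:reduction} applied to $b$ concludes. You should rebuild your argument around the dominated block $\overline{b}$ and its elementary abelian defect group $D/\Phi(D)$.
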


\begin{proof}
By \cite[Theorem 9.9]{N98}, $b$ dominates some block $\overline{b}\in\Bl(\norm G D/\Phi(D))$ with defect group $D/\Phi(D)$. We have $k(\overline{b})=k_0(\overline{b})\leq k_0(b)$ by \cite[Theorem 9]{reynolds}. We explore all the possibilities for $k(\overline{b})$. If $k(\overline{b})=1$ then $D/\Phi(D)=1$ which is impossible. If $k(\overline{b})=2$ then by the main result of \cite{Bra}, $|D/\Phi(D)|=2$ and then $D$ is a cyclic 2-group. In particular, $k(B)=k_0(B)$ by \cite{dadecyclic}, which contradicts \cite[Corollary 1.3]{landrock}. If $k(\overline{b})=3$ by \cite[Theorem 4.1]{KNST} we have $|D/\Phi(D)|=3$ so $D$ is a cyclic 3-group. Again $k(B)=k_0(B)$ which contradicts \cite[Corollary 1.6]{landrock}. 

If $k(\overline{b})=4$ by the main result of \cite{MRS23} we have $D/\Phi(D)\in\{\mathsf{C}_4,\mathsf{C}_2\times\mathsf{C}_2,\mathsf{C}_5\}$. The first case is impossible. If $|D/\Phi(D)|=5$ then $D$ is cyclic, so applying \cite{dadecyclic}, $k(b)=k_0(b)=k_0(B)=k(B)=5$ and it follows from Theorem \ref{thm:reduced} applied to $b$ that $|D|=5$. Thus we assume $D/\Phi(D)\cong \mathsf{C}_2\times \mathsf{C}_2$, and notice that by \cite[Corollary 1.3]{landrock}, $D$ is nonabelian, and $k_0(B)=4$. By arguing similarly, we get that $D/D'\cong\mathsf{C}_2\times\mathsf{C}_2$ so $D$ is semidihedral, dihedral or generalized quaternion. Since these groups are metacyclic we may use \cite[Theorem 8.1]{sambale} to conclude that the only possibilities are $D\cong\mathsf{Q}_8$ and $D\cong\mathsf{D}_8$. 
%
%


%
%

Finally if $k(\overline{b})=5$, we have that $5=k(\overline{b})\leq k_0(b)=k_0(B)\leq 5$, so $k(\overline{b})=k_0(b)=5$. By Theorem \ref{thm:reduction} applied to $\norm G D/\Phi(D)$ we have that $D/\Phi(D)$ is cyclic of order 5 or 7. Then $D$ is abelian and $k(b)=k_0(b)=5$ by \cite[Theorem 6]{reynolds}. By Theorem \ref{thm:reduction} applied to $\norm G D$, we obtain that $D$ is cyclic of order 5 or 7, concluding the proof.
\end{proof}


\section{Some results on projective characters}\label{sec:lemmas}

We begin this section with the following observation.
\begin{lem}\label{lem:l(B)}
Let $G$ be a finite group and suppose that $G$ has a normal Sylow $p$-subgroup $D$ with $p$-complement $K$. Let $B\in\Bl(G)$ and suppose that there exists $\lambda\in{\rm Irr}(\oh{p'}G)$, $G$-invariant lying under some irreducible character in $B$. Then we have $l(B)=|\Irr(K|\lambda)|$.
\end{lem}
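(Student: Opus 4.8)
The plan is to move from ordinary to Brauer characters, so that the hypothesis that $D$ is a normal Sylow $p$-subgroup can be exploited through $\IBr(G)=\IBr(G/D)$. Write $N=\oh{p'}G$. Since $D$ is a normal Sylow $p$-subgroup we have $D=\oh pG$, and since a normal $p'$-subgroup is contained in every $p$-complement we have $N\trianglelefteq K\leq G$, with $G=D\rtimes K$ and $K\cong G/D$ a $p'$-group. As $l(B)=|\IBr(B)|$, the statement will follow once I show that $\IBr(B)=\IBr(G|\lambda)$ and that $|\IBr(G|\lambda)|=|\Irr(K|\lambda)|$.

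For the second equality I would set up the relevant bijection. Because $D=\oh pG$ lies in the kernel of every irreducible $p$-modular representation, inflation gives a bijection $\IBr(G/D)\to\IBr(G)$; composing it with $G/D\cong K$ and with $\IBr(K)=\Irr(K)$ (valid as $K$ is a $p'$-group) produces a bijection $\Irr(K)\to\IBr(G)$, say $\psi\mapsto\hat\psi$. Since $N\leq K$ is carried identically into $G/D\cong K$, one checks that $\hat\psi_N=\psi_N$, so $\hat\psi$ lies over $\lambda$ exactly when $\psi$ does. Hence the bijection restricts to $\Irr(K|\lambda)\to\IBr(G|\lambda)$, giving $|\IBr(G|\lambda)|=|\Irr(K|\lambda)|$.

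It remains to identify $\IBr(B)$ with $\IBr(G|\lambda)$, which is the core of the argument. One inclusion is soft: as $\lambda$ is $G$-invariant and lies under a character of $B$, the block $B$ covers the block $\{\lambda\}$ of the $p'$-group $N$, and this is the only $G$-orbit of blocks of $N$ that it covers; therefore every Brauer character of $B$ lies over $\lambda$, i.e., $\IBr(B)\subseteq\IBr(G|\lambda)$. For the reverse inclusion I would invoke \cite[Theorem 10.20]{N98}, used repeatedly in Section \ref{sec:red}, which yields $\Irr(B)=\Irr(G|\lambda)$: this forces any block of $G$ covering $\{\lambda\}$ to have its ordinary characters inside $\Irr(B)$, hence to equal $B$, so $B$ is the unique block of $G$ lying over $\lambda$. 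Consequently every Brauer character over $\lambda$ belongs to $B$ and $\IBr(G|\lambda)=\IBr(B)$, whence $l(B)=|\IBr(B)|=|\Irr(K|\lambda)|$.

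The main obstacle is exactly this last step, the uniqueness of the block of $G$ above $\lambda$; without it the Brauer characters over $\lambda$ could a priori be distributed among several blocks (indeed the number of blocks of $G$ over a $G$-invariant character of an arbitrary normal $p'$-subgroup need not be one). What rescues the situation is that $N$ is the full $p'$-core $\oh{p'}G$, which is precisely the hypothesis under which \cite[Theorem 10.20]{N98} applies; I would therefore phrase the reduction so that this theorem, rather than any finer block-covering analysis, carries the uniqueness.
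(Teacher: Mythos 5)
Your proposal is correct and takes essentially the same route as the paper, whose entire proof is the identification $\IBr(G)=\IBr(G/D)$ via \cite[Lemma 2.32]{N98} together with $\IBr(K)=\Irr(K)$, compatible with restriction to $\oh{p'}G\leq K$, followed by ``and the result follows.'' What you add is exactly the bookkeeping the paper leaves implicit in that last clause: since $G$ is $p$-solvable here, \cite[Theorem 10.20]{N98} gives $\Irr(B)=\Irr(G|\lambda)$, so $B$ is the unique block over $\lambda$ and $\IBr(B)=\IBr(G|\lambda)$ corresponds to $\Irr(K|\lambda)$ --- a legitimate and welcome filling-in of the omitted step rather than a different argument.
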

\begin{proof}
By \cite[Lemma 2.32]{N98} we have $\IBr(G)=\IBr(G/D)$ which can be identified with $\IBr(K)=\Irr(K)$ and the result follows.
\end{proof}

In proving Theorem \ref{thm:reduced} we will be dealing with groups with few projective characters. The next results will help us deal with these, by allowing us to bring Sylow subgroups into the picture.

\begin{lem}\label{lem:cyclic}
Let $N\normal G$, $\theta\in\Irr(N)$ and assume $\theta$ is $G$-invariant and that $G/N$ is cyclic. Then $\theta$ extends to $G$.
\end{lem}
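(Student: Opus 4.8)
The statement to prove is the classical result that a $G$-invariant irreducible character $\theta$ of a normal subgroup $N$ extends to $G$ whenever the quotient $G/N$ is cyclic. Let me sketch the standard proof.

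The plan is to reduce immediately to the case where $G/N$ is cyclic of prime-power order, but in fact the cyclic case can be handled directly and uniformly, so I would proceed as follows. Let $g\in G$ be chosen so that $gN$ generates the cyclic quotient $G/N$, and write $|G/N|=n$. Since $\theta$ is $G$-invariant, it is in particular invariant under conjugation by $g$, so $\theta^g=\theta$. The key input is the theory of character triples, or more elementarily the fact that $\theta$ extends to the subgroup $\langle N,g\rangle=G$ precisely when a suitable obstruction in the Schur multiplier vanishes; for cyclic quotients this obstruction is automatically trivial.

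Concretely, I would argue as follows. Because $\theta$ is $g$-invariant, there is a (projective) representation $\mathcal{X}$ affording $\theta$ together with an invertible matrix $P$ implementing the conjugation action, i.e.\ $\mathcal{X}(gxg^{-1})=P\,\mathcal{X}(x)\,P^{-1}$ for all $x\in N$. Since $g^n\in N$, one checks that $P^n$ and $\mathcal{X}(g^n)$ differ by a scalar, by Schur's lemma. The crucial step is then to rescale $P$: replacing $P$ by $\mu P$ for a suitable scalar $\mu$ (extracting an $n$-th root of the scalar relating $P^n$ and $\mathcal{X}(g^n)$), we can arrange that $P^n=\mathcal{X}(g^n)$ exactly. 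With this normalization, the assignment sending $g^i x\mapsto P^i\mathcal{X}(x)$ for $x\in N$ and $0\le i<n$ is well defined and gives a genuine (ordinary, not merely projective) representation of $G$ extending $\mathcal{X}$. Its character then restricts to $\theta$ on $N$, so $\theta$ extends to $G$. I would cite the standard reference (this is \cite[Theorem 6.20 and Corollary 11.22]{I08}, or \cite[Corollary 8.16]{N18}) rather than reproducing the matrix manipulations in full.

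The step I expect to be the main obstacle — and the one place where cyclicity of $G/N$ is genuinely used — is the rescaling that forces $P^n=\mathcal{X}(g^n)$. For a general quotient one cannot independently normalize the matrices implementing several generators, and the failure to do so is exactly measured by a class in $H^2(G/N,\CC^\times)$; the point is that for a cyclic group this cohomology group vanishes, so a single generator suffices and the $n$-th root extraction always succeeds. Everything else — the existence of $P$, the scalar comparison via Schur's lemma, and the verification that the extended map respects multiplication — is routine. Since this is a well-known foundational result, in the final write-up I would simply invoke the appropriate theorem from \cite{I08} or \cite{N18} and omit the internal details.
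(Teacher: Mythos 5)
Your argument is correct and is essentially the paper's approach: the paper's entire proof is the citation \cite[Theorem 5.1]{N18}, and your sketch (find $P$ intertwining $\mathcal{X}$ with its $g$-conjugate, compare $P^n$ with $\mathcal{X}(g^n)$ via Schur's lemma, rescale by an $n$-th root so that $P^n=\mathcal{X}(g^n)$, then extend) is precisely the classical proof of that cited theorem. One small correction to your references: in this paper's bibliography the character-theoretic result is \cite[Corollary 11.22]{I06} and \cite[Theorem 5.1]{N18}, whereas \cite{I08} is Isaacs' \emph{Finite Group Theory} and does not contain it.
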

\begin{proof}
See \cite[Theorem 5.1]{N18}.
\end{proof}

We denote by ${\rm Lin}(G)$ the set of linear characters of $G$.

\begin{lem}\label{lem:schur}
Let $G$ be a finite group. Suppose that  $N\normal G$ and assume $\lambda\in\Irr(N)$ is $G$-invariant and linear. Let $o(\lambda)$ be the order of $\lambda$ as an element of $\Lin(N)$. If every Sylow $p$-subgroup of $G/N$ has trivial Schur multiplier whenever $p$ divides $o(\lambda)$ then $\lambda$ extends to $G$.
\end{lem}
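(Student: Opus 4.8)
The plan is to reinterpret extendibility of $\lambda$ as the vanishing of its obstruction cohomology class and then to kill this class one prime at a time. First I would fix a transversal $\{t_x : x\in G/N\}$ of $N$ in $G$ and let $\beta\colon G/N\times G/N\to N$ be the associated factor set, determined by $t_xt_y=t_{xy}\beta(x,y)$. Since $\lambda$ is linear and $G$-invariant, a direct computation shows that a class function of the form $\hat\lambda(t_xn)=\mu(x)\lambda(n)$, with $\mu\colon G/N\to\C^\times$, is a (necessarily linear) extension of $\lambda$ precisely when $\mu(x)\mu(y)=\mu(xy)\,\lambda(\beta(x,y))$ for all $x,y$. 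Hence $\lambda$ extends to $G$ if and only if the class $[\omega_\lambda]\in H^2(G/N,\C^\times)$ of the $2$-cocycle $\omega_\lambda(x,y)=\lambda(\beta(x,y))$ is trivial; recording this reduction is the first step.

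The second step is to bound the order of $[\omega_\lambda]$ by $o(\lambda)$. This is transparent from the formula for $\omega_\lambda$: since $\omega_{\lambda_1\lambda_2}=\omega_{\lambda_1}\omega_{\lambda_2}$ as cocycles, the assignment $\lambda\mapsto[\omega_\lambda]$ is a homomorphism from the group of $G$-invariant linear characters of $N$ into $H^2(G/N,\C^\times)$. As $\lambda^{o(\lambda)}=1_N$ extends to $1_G$, we obtain $[\omega_\lambda]^{o(\lambda)}=[\omega_{1_N}]=1$, so the order of $[\omega_\lambda]$ divides $o(\lambda)$.

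The third step is Sylow detection for $Q:=G/N$. For a prime $p$ and $Q_p\in\Syl_p(Q)$, the composite of restriction and corestriction $H^2(Q,\C^\times)\to H^2(Q_p,\C^\times)\to H^2(Q,\C^\times)$ is multiplication by $[Q:Q_p]$, which is prime to $p$; therefore restriction is injective on the $p$-primary part of $H^2(Q,\C^\times)$. Combined with the fact that $H^2(Q_p,\C^\times)$ is a $p$-group, this yields that $[\omega_\lambda]=1$ if and only if its restriction to $Q_p$ is trivial for every prime $p$. I would then finish with two cases. If $p\nmid o(\lambda)$, the order bound forces the $p$-part of $[\omega_\lambda]$ to be trivial, so its restriction to $Q_p$ vanishes. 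If $p\mid o(\lambda)$, then $H^2(Q_p,\C^\times)=1$ by the trivial-Schur-multiplier hypothesis, so the restriction is again trivial. Hence $[\omega_\lambda]=1$ and $\lambda$ extends to $G$.

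The step requiring the most care is the Sylow detection: one must combine the corestriction--restriction injectivity on the $p$-part with the fact that $H^2(Q_p,\C^\times)$ is a $p$-group to conclude that a class restricting trivially to every Sylow subgroup is itself trivial. This is standard and can alternatively be cited, after which the argument reduces to the elementary order bound and the check of two cases. As a wholly citation-based variant, one could instead invoke a result detecting extendibility on the preimages $H_p$ of the Sylow subgroups of $G/N$, extend $\lambda$ over $H_p$ using the trivial Schur multiplier when $p\mid o(\lambda)$, and use the coprime (canonical) extension theorem (with $\lambda(1)=1$ and $p\nmid o(\lambda)$) otherwise, concluding identically.
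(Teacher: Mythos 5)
Your proof is correct and follows essentially the same route as the paper, whose proof consists of citing \cite[Theorems 6.26 and 11.7]{I06}: extendibility of the invariant linear character is equivalent to the vanishing of its obstruction class in $H^2(G/N,\C^\times)$, which is detected on Sylow subgroups and killed there either by the order bound $o(\lambda)$ (when $p\nmid o(\lambda)$) or by the trivial Schur multiplier hypothesis (when $p\mid o(\lambda)$). The only difference is that you prove the cited ingredients from scratch (the explicit cocycle $\omega_\lambda$, the multiplicativity giving $[\omega_\lambda]^{o(\lambda)}=1$, and Sylow detection via restriction--corestriction), and the citation-based variant you sketch at the end is precisely the paper's argument.
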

\begin{proof}
This follows from \cite[Theorem 6.26]{I06} and \cite[Theorem 11.7]{I06}.
\end{proof}

If $N\normal G$, $\lambda\in\Irr(N)$ is $G$-invariant and $|\Irr(G|\lambda)| =1$ then we say $\lambda$ is fully ramified in $G$. In this case, it is easy to see that $|G:N|$ must be a square. One of the first applications of the classification of finite simple groups to character theory was proving that, in this case, $G/N$ is solvable (this is known as the Howlett--Isaacs theorem, see \cite{HI}).

\begin{lem}\label{lem:demeyerjanusz}
Let $Z\normal G$ and let $\lambda\in{\rm Irr}(Z)$ be $G$-invariant. Then $\lambda$ is fully ramified in $P$ for every $P/Z\in\Syl_p(G/Z)$ and every prime $p$ dividing $|G|$ if and only if $\lambda$ is fully ramified in $G$. In particular, if $\lambda$ is fully ramified in $G$ then the nontrivial Sylow subgroups of $G/Z$ are not cyclic.
\end{lem}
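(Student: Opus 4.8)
The plan is to recast the statement as one about projective characters of $\overline{G}=G/Z$ and then prove both implications by the same elementary device. First I would replace the character triple $(G,Z,\lambda)$ by an isomorphic one in which $Z$ is central and $\lambda$ is linear and faithful (exactly as in Step 3 of Theorem~\ref{thm:reduction}, via \cite[Problem 8.13]{N98} or \cite[Chapter 11]{I06}); such an isomorphism preserves the sets $\Irr(G|\lambda)$ together with character degrees, and it carries the subgroups $P$ with $P/Z\in\Syl_p(G/Z)$ to the corresponding subgroups on the other side, so I may assume $Z\leq\zent G$ and $\lambda\in\Lin(Z)$ faithful. Associated to the triple there is a factor set $\alpha$ of $\overline{G}$ and a degree-preserving bijection between $\Irr(G|\lambda)$ and the irreducible $\alpha$-projective characters of $\overline{G}$ (see \cite[Chapter 11]{I06}); restricting $\alpha$ to $\overline{P}=P/Z$ gives the analogous bijection for $\Irr(P|\lambda)$. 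Writing $n=\sqrt{|\overline{G}|}$, the relation $\sum_\xi\xi(1)^2=|\overline{G}|$ over the irreducible $\alpha$-characters $\xi$ shows that $\lambda$ is fully ramified in $G$ exactly when there is a unique such $\xi$, which then has degree $n$.

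The device is the following observation: an $\alpha$-projective character $\theta$ of a group $\overline{H}$ that vanishes off the identity must equal $\tfrac{\theta(1)}{|\overline{H}|}\rho$, where $\rho=\sum_\xi\xi(1)\xi$ is the character of the regular $\alpha$-representation. This is immediate from second orthogonality, since the irreducible $\alpha$-characters form an orthonormal basis of the space of $\alpha$-class functions and $\rho=|\overline{H}|\,\delta_1$. In particular, when $\lambda$ is fully ramified in $G$ the unique character $\xi$ satisfies $\xi=n\,\delta_1$, i.e.\ it vanishes off $1$. To prove that this forces $\lambda$ to be fully ramified in each $P$, I would restrict $\xi$ to $\overline{P}$: it still vanishes off $1$, so $\xi_{\overline{P}}=\tfrac{n}{|\overline{P}|}\rho_{\overline{P}}$, and the multiplicity of each irreducible $\alpha$-character $\eta$ of $\overline{P}$ in $\xi_{\overline{P}}$ equals $\tfrac{n\,\eta(1)}{|\overline{P}|}=\tfrac{n_{p'}}{n_p}\eta(1)\in\ZZ$, using $|\overline{P}|=n_p^2$ and $n=n_pn_{p'}$. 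As $\gcd(n_p,n_{p'})=1$ this forces $n_p\mid\eta(1)$, while $\sum\eta(1)^2=|\overline{P}|$ gives $\eta(1)\leq n_p$; hence $\eta(1)=n_p$ for every $\eta$ and there is exactly one of them, so $\lambda$ is fully ramified in $P$.

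The reverse implication is dual, using induction in place of restriction. Assuming $\lambda$ is fully ramified in every $P$, let $\psi_p$ be the unique irreducible $\alpha$-character of $\overline{P}$; it vanishes off $1$ and has degree $n_p$. Then the induced $\alpha$-character $\psi_p^{\overline{G}}$ also vanishes off $1$, because its value at $g$ only involves $\overline{G}$-conjugates of $g$ lying in $\overline{P}$, all of which $\psi_p$ kills unless $g=1$. Consequently $\psi_p^{\overline{G}}=\tfrac{|\overline{G}:\overline{P}|\,n_p}{|\overline{G}|}\rho_{\overline{G}}$, and the multiplicity of each irreducible $\alpha$-character $\xi$ of $\overline{G}$ equals $\tfrac{\xi(1)}{n_p}\in\ZZ$. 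Thus $n_p\mid\xi(1)$ for every prime $p$; since the $n_p$ are pairwise coprime with product $n$, we get $n\mid\xi(1)$, and $\xi(1)\leq n$ then forces $\xi(1)=n$ for all $\xi$, leaving room for a single $\xi$ in $\sum\xi(1)^2=|\overline{G}|$. Hence $\lambda$ is fully ramified in $G$.

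Finally, the ``in particular'' clause follows from the first implication: if $\lambda$ is fully ramified in $G$ then it is fully ramified in each $P$ with $P/Z\in\Syl_p(G/Z)$. Were some such $P/Z$ nontrivial and cyclic, then, $\lambda$ being $P$-invariant, Lemma~\ref{lem:cyclic} would let $\lambda$ extend to $\hat\lambda\in\Irr(P)$; Gallagher's theorem would then give $|\Irr(P|\lambda)|=|\Irr(P/Z)|=|P/Z|>1$, contradicting full ramification in $P$. I expect the main obstacle to be the bookkeeping underlying the first paragraph: fixing one factor set $\alpha$ on $\overline{G}$, using its restriction on every $\overline{P}$ coherently, and setting up the orthogonality relations, the regular-character identity $\rho=|\overline{H}|\,\delta_1$, and the induced-character formula for $\alpha$-projective characters. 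Once this standard framework is in place (as in \cite[Chapter 11]{I06}), both implications collapse to the one-line divisibility arguments above.
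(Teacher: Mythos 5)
Your proposal is correct, but it is worth pointing out that the paper does not actually prove this lemma: it is quoted wholesale, the two implications being \cite[Theorem 8.3]{N18} (the DeMeyer--Janusz theorem) and the cyclic-Sylow consequence being \cite[Problem 8.1]{N18}. What you have written is essentially the classical proof of the DeMeyer--Janusz theorem, recast in the language of twisted group algebras: reduction to central $Z$ and linear faithful $\lambda$ via character triples, the key equivalence that full ramification means the unique character over $\lambda$ vanishes off the identity (equivalently, off $Z$), and then the restriction/induction divisibility pincer $n_p\mid\eta(1)$, $\eta(1)\le n_p$ on one side and $n_p\mid\xi(1)$ for all $p$, $\xi(1)\le n$ on the other. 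So your route is not different in substance from the proof behind the citation, but it has the virtue of being self-contained, and the projective-character formulation makes transparent why restriction and induction interact so cleanly with the vanishing condition. Two small points would need tightening in a final write-up. First, in the converse direction you use $n=\sqrt{|\overline{G}|}$ and $|\overline{P}|=n_p^2$, but a priori you only know each $|\overline{P}|=|\overline{G}|_p$ is a square (from full ramification in $P$); you should record that this makes $|\overline{G}|$ a square before writing $n$ and $n_p$. Second, your multiplicity extraction from $\theta=\tfrac{\theta(1)}{|\overline{H}|}\rho$ is justified as stated by linear independence of the irreducible $\alpha$-characters rather than by orthonormality ``on the space of $\alpha$-class functions'': projective characters are class functions only on $\alpha$-regular classes, so the cleanest phrasing is that $\theta$ and $\tfrac{\theta(1)}{|\overline{H}|}\rho$ agree as functions and both are nonnegative integer, respectively rational, combinations of the linearly independent irreducible $\alpha$-characters, whence the coefficients match (one should also fix a factor set with root-of-unity values once and for all, and use its restriction on every $\overline{P}$, as you anticipate). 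Your treatment of the ``in particular'' clause via Lemma~\ref{lem:cyclic} and Gallagher is exactly the intended argument of \cite[Problem 8.1]{N18}.
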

\begin{proof}
This is \cite[Theorem 8.3]{N18} and \cite[Problem 8.1]{N18}.
\end{proof}

The following results, which extend the Howlett--Isaacs theorem, appeared in \cite{higgs} in the language of projective representations. 

\begin{thm}[Higgs]\label{lem:demeyerjanusz2}
Let $Z\normal G$ and let $\lambda\in{\rm Irr}(Z)$ be $G$-invariant and suppose that $|{\rm Irr}(G|\lambda)|=2$. Then $G/Z$ is solvable. Furthermore, if $q$ is an odd prime and $Q/Z\in{\rm Syl}_q(G/Z)$, then $|{\rm Irr}(Q|\lambda)|=1$, and in particular, $|Q/Z|$ is a square. If $q=2$, $|{\rm Irr}(Q|\lambda)|=2$, and in particular, $|Q/Z|$ is an odd power of 2. 
\end{thm}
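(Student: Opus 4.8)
The plan is to pass to projective representations and then combine a purely local, Sylow-wise degree computation with the localization principle behind Lemma \ref{lem:demeyerjanusz}. First I would reduce to the case that $\lambda$ is linear and faithful and $Z$ is central, by replacing $(G,Z,\lambda)$ with an isomorphic ordinary character triple exactly as in Step 3 of the proof of Theorem \ref{thm:reduction}; this preserves $|\Irr(G|\lambda)|$, the isomorphism type of $\overline{G}:=G/Z$, and (under the identification $\overline{P}=P/Z$) the numbers $|\Irr(P|\lambda)|$ for all intermediate $Z\le P\le G$. Under this reduction the members of $\Irr(G|\lambda)$ biject with the irreducible $\alpha$-projective characters of $\overline{G}$, where $\alpha$ is the cohomology class attached to $\lambda$; their number equals the number of $\alpha$-regular conjugacy classes of $\overline{G}$, and if $d_1,d_2$ are their degrees then $d_1^2+d_2^2=|\overline{G}|$. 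The hypothesis thus becomes: $\overline{G}$ has exactly two $\alpha$-regular classes.

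The quantitative heart of the statement is an elementary remark about a Sylow subgroup $\overline{P}=P/Z$ at a prime $p$. Since $\overline{P}$ is a $p$-group, every irreducible $\alpha|_{\overline{P}}$-projective degree is a power of $p$, so if $\Irr(P|\lambda)$ has $r$ members of degrees $p^{b_1}\le\cdots\le p^{b_r}$ then $\sum_{i=1}^r p^{2b_i}=|\overline{P}|$ is a power of $p$. For $r=1$ this forces $|\overline{P}|=p^{2b_1}$, a square (the fully ramified case). For $r=2$, factoring out $p^{2b_1}$ gives $p^{2b_1}(1+p^{2(b_2-b_1)})=|\overline{P}|$, and $1+p^{2(b_2-b_1)}$ is a power of $p$ only when $b_1=b_2$ and $p=2$; hence $p=2$, $b_1=b_2=:b$, and $|\overline{P}|=2^{2b+1}$, an odd power of $2$. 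Consequently both displayed conclusions for $\Syl_q(G/Z)$ follow the moment one knows that $|\Irr(Q|\lambda)|=1$ for odd $q$ and $|\Irr(Q|\lambda)|=2$ for $q=2$.

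It remains to compute $m_p:=|\Irr(P|\lambda)|$ for each Sylow. Working with $\alpha$-regular elements, I would use that $g$ is $\alpha$-regular in $\overline{G}$ if and only if each of its prime-power parts is; since the only $\alpha$-regular classes are $\{1\}$ and $\mathrm{cl}(g_0)$, the element $g_0$ cannot have two distinct prime divisors (its coprime parts would be nontrivial $\alpha$-regular elements of strictly smaller order, yielding a third class), so $g_0$ is a $q$-element for a single prime $q$. The localization underlying Lemma \ref{lem:demeyerjanusz} then gives $m_p=1$ for every $p\ne q$, while $m_q\ge 2$; feeding $m_q=2$ into the $r=2$ computation forces $q=2$. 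The genuinely delicate points, and where I expect the real work to lie, are twofold. First is the upper bound $m_q\le 2$: this must be extracted from the global constraint of only two $\alpha$-regular classes together with the group structure, not from the local degree equation alone, since fusion of $\overline{P}$-classes in $\overline{G}$ defeats a naive count. Second, and the main obstacle, is the solvability of $\overline{G}$ itself. For this I would argue as in the Howlett--Isaacs theorem: in a minimal counterexample a nonabelian chief factor of $\overline{G}$ would have to support a projective structure compatible with there being only two $\alpha$-regular classes, and the classification of finite simple groups excludes this exactly as in the fully ramified case. Once solvability is available, an induction on $|\overline{G}|$ along a chief series, split according to whether $\lambda$ is fully ramified over the chief factor, reduces the determination of the $m_p$ to the local computation of the second paragraph.
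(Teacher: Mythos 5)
The paper does not actually prove this statement: its entire proof is the citation to \cite[Theorem~B and Proposition~2.2]{higgs}, so your proposal is an attempt to reprove Higgs's theorems from scratch, and as such it has two genuine gaps --- both of which you yourself flag as ``where the real work lies,'' which means the proposal defers precisely the content of the result. First, solvability of $G/Z$: ``argue as in the Howlett--Isaacs theorem'' is not an argument. Higgs's Theorem~B is a separate CFSG-based theorem; one must show, essentially quasi-simple group by quasi-simple group, that a nonabelian composition factor is incompatible with having only two $\alpha$-regular classes, and the numerology is genuinely different from the fully ramified case treated in \cite{HI}, so nothing transfers ``exactly as'' there. Second, the local counts: the step ``the localization underlying Lemma~\ref{lem:demeyerjanusz} then gives $m_p=1$ for every $p\ne q$'' invokes a formal principle that does not exist. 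Lemma~\ref{lem:demeyerjanusz} (DeMeyer--Janusz) is itself a substantive theorem, it concerns only the fully ramified case, and its hard direction runs local-to-global; here you need global-to-local for the count $2$, which fails naively because $\alpha$-regularity in $\overline{Q}=QZ/Z$ is tested only against the smaller centralizer $\cent{\overline{Q}}{g}$, so $\overline{Q}$ can a priori have many $\alpha$-regular classes even though $\overline{G}$ has just two. You concede this obstacle for the bound $m_q\le 2$, but it equally blocks $m_p=1$ for $p\ne q$; filling these two holes is exactly Proposition~2.2 and Theorem~B of \cite{higgs}.

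What is correct, and worth saying, is the elementary shell: the character-triple reduction to $Z$ central with $\lambda$ linear and faithful preserves all the relevant counts, and the degree computation for a $q$-group (a sum of $r$ squares of $q$-powers equal to $|\overline{Q}|$ forces $|\overline{Q}|$ a square when $r=1$, and forces $q=2$ with $|\overline{Q}|$ an odd power of $2$ when $r=2$) cleanly recovers both ``in particular'' clauses once the values $|\Irr(Q|\lambda)|$ are known. One further unjustified step: you claim the prime-power parts of the $\alpha$-regular element $g_0$ are again $\alpha$-regular. Bimultiplicativity of $\omega(x,y)=\alpha(x,y)\alpha(y,x)^{-1}$ on commuting triples gives the \emph{opposite} direction (commuting regular parts have regular product, since $\cent{\overline{G}}{g}\subseteq\cent{\overline{G}}{g_i}$); for the direction you use, regularity of a power $g_i$ of $g_0$ must be tested against the possibly larger centralizer $\cent{\overline{G}}{g_i}\supseteq\cent{\overline{G}}{g_0}$, and that requires a separate lemma or reference, not the pairing computation alone.
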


\begin{proof}
This is \cite[Theorem B and Proposition 2.2]{higgs}.
\end{proof}

 It remains open whether if $N\normal G$ and $\lambda\in\Irr(N)$ is $G$-invariant then $|\Irr(G|\lambda)|=3$ implies that $G/N$ is solvable. This solvability would simplify our proofs for the case of rank 3 and would be helpful when dealing with larger values of $k(B)$ in future investigations.  We mention that the group $\SL(2,5)$ shows that this solvability can no longer be guaranteed if $|\Irr(G|\lambda)|\geq 4$.

\section{Proof of the main theorem}\label{sec:proof}

In this section, we prove Theorem \ref{thm:reduced}, thus concluding the proof of Theorem \ref{thm:A}. For the remainder of the paper we will write $|D|=p^d$.

\subsection{Preliminaries on primitive permutation groups}

To prove Theorem \ref{thm:reduced} we need to deal with primitive permutation groups. The degree of a primitive permutation group is the size of the set on which it is acting, in our case, $|D|$. We begin by recalling some important definitions related to the structure of these groups.

Let $V$ be a $p$-elementary abelian group. Then $V$ can be identified with the Galois field $\mathbb{F}_{p^k}$. The semilinear group $\Gamma(V)$ is defined as the set of maps
$$\Gamma(V)=\{x\mapsto ax^\sigma\mid x\in V, a\in V\setminus \{0\},\sigma\in\Gal(\mathbb{F}_{p^k}/\mathbb{F}_p)\}.$$
Notice that $\Gamma(V)$ is a metacyclic group. Indeed, it contains the normal subgroup
$$\Gamma_0(V)=\{x\mapsto ax\mid a\in V\setminus \{0\}\}$$
which is cyclic and isomorphic to the multiplicative group of $V$, and $$\Gamma(V)/\Gamma_0(V)\cong\Gal(\mathbb{F}_{p^k}/\mathbb{F}_p)$$ is cyclic of order $k$.
We frequently use the notation $\Gamma(p^k)=\Gamma(V)$. If a subgroup $H\leq \Gamma(V)$ is $2$-transitive on $V$ then $\Gamma_0(V)\sbs H$. Semilinear groups appear very naturally in the classifications of primitive permutation groups.


The following are the classifications of affine primitive permutation groups that we will need. We eliminate any case that contradicts any hypothesis in Theorem \ref{thm:reduced}, which we recall here for the reader's convenience.

\begin{hyp}\label{hypo} Suppose that $G$ is a finite group and $p\geq 5$ is a prime number. Suppose that $Z=\zent G=\oh{p'}G$ and that $D$ is a normal Sylow $p$-subgroup of $G$. Let $K$ be a $p$-complement of $G$, so $G=D\rtimes K$. Assume that $D$ is a minimal normal subgroup of $G$ and that $G/Z$ is an affine primitive permutation group of rank 2,3 or 4 on $D$.
\end{hyp}


We start by stating the classification of the $p$-solvable primitive permutation groups of rank 2 (that is, $2$-transitive permutation groups) which is due to D. Passman. 

\begin{thm}[Passman]\label{thm:passman}
Let $G/Z$ be an affine primitive permutation group of rank $2$ on $D$. Then
\begin{enumerate}
\item $K/Z\leq \Gamma(p^d)$,
\item $d=2$ and the possible structures of $K/Z$ are shown in \cite[Table 15.1]{sambale}. 
\end{enumerate}
\end{thm}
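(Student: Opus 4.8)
The plan is to deduce this statement from the classification of $p$-solvable affine $2$-transitive permutation groups (Huppert--Passman, with the insoluble-stabilizer cases covered by Hering's classification of transitive linear groups), after reading off which configurations are compatible with Hypothesis \ref{hypo}. The conceptual work is a sequence of reductions that turns the hypotheses into the precise setting of that classification; the rest is bookkeeping against \cite[Table 15.1]{sambale}.

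First I would pin down the structure forced by the hypotheses. Since $D$ is a normal $p$-subgroup and $K\cong G/D$ is a $p'$-group, $G$ is $p$-solvable; and because $Z=\oh{p'}{G}$ is a normal $p'$-subgroup it lies in every $p$-complement, so $Z\leq K$ and $\overline{K}:=K/Z$ is a $p'$-group. Rank $2$ of the affine action of $G/Z$ on $D$ means the point stabilizer $\overline{K}$ has exactly two orbits, hence acts transitively on $D\setminus\{1\}$; this action is faithful because $\cent{K}{D}$ is characteristic in the normal subgroup $\cent{G}{D}=D\times\cent{K}{D}$ and therefore $\cent{K}{D}\leq\oh{p'}{G}=Z$. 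Identifying $D$ with $\FF_{p^d}$, this realizes $\overline{K}\leq\GL(d,p)$ as a transitive linear group of order prime to $p$.

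Next I would invoke the classification of transitive linear groups and use the $p'$-condition to collapse it. The generic families $\SL(n,q),\Sp(n,q),G_2(q)'$ (with $q$ a power of $p$) all have order divisible by $p$, so none can embed in the $p'$-group $\overline{K}$; what survives is exactly the semilinear alternative $\overline{K}\leq\Gamma(p^d)$, which is option (1), together with a finite list of exceptional stabilizers (of $\QQQ_8$-normalizer, $\SL(2,3)$- or $\SL(2,5)$-type). I would then prune the exceptional list using $p\geq 5$: the exceptional $p$-solvable cases occur only in degrees $p^2$ for the small primes appearing in the classification and in degree $3^4$, so the $p=3$ entries drop out and every surviving exceptional group has $d=2$, with structure precisely one of those recorded in \cite[Table 15.1]{sambale}. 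This yields option (2) and completes the dichotomy.

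The main obstacle will not be conceptual but the case bookkeeping, and the delicate point is that the classification must be used in its $p$-solvable rather than its soluble form. The $\SL(2,5)\cong 2.\AAA_5$-type stabilizers are insoluble and are admissible here only because $\overline{K}$ is merely required to be a $p'$-group (so that $\AAA_5$ becomes a $p'$-section once $p\geq 7$); the purely soluble Huppert--Passman list would omit them. I would therefore take care to confirm that for $p\geq 5$ no insoluble $p'$-transitive linear groups arise outside dimension $2$ --- in particular that the dimension-$4$ extraspecial-normalizer configurations genuinely occur only at $p\in\{2,3\}$ --- so that the surviving exceptions are exactly the degree-$p^2$ groups tabulated, and the conclusion $d=2$ is complete.
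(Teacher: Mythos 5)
Your proposal is correct and is in substance the paper's own proof: the paper disposes of the statement by directly citing Passman's classification of $p$-solvable doubly transitive permutation groups \cite{passman} (with the exceptional $d=2$ structures tabulated in \cite[Table 15.1]{sambale}), which is precisely the classification you invoke after your reduction to a faithful transitive $p'$-linear group on $D$. Your preliminary verifications --- that $Z\leq K$, that $K/Z$ acts faithfully since $\cent K D\leq \oh{p'}G=Z$, and that for $p\geq 5$ the exceptional degrees $2^4$, $3^4$, $3^6$ in the Hering list drop out so only $d=2$ exceptions survive --- are all sound and simply make explicit what the paper's one-line citation leaves implicit.
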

\begin{proof}
See \cite{passman}.
\end{proof}

As pointed out by B. Sambale, the primitive group missing in \cite[Table 15.1]{sambale} is PrimitiveGroup($59^2, 84$) in \cite{GAP}.

The primitive permutation groups of rank 3 were classified by M. Liebeck. Again we eliminate every possibility contradicting any hypothesis from Theorem \ref{thm:reduced}.

\begin{thm}[Liebeck]\label{thm:liebeck}
Under Hypotheses \ref{hypo}, assume that the rank of $G/Z$ on $D$ is $3$.  Then
\begin{enumerate}
\item $K/Z\leq\Gamma(p^d)$,
\item $d=2$ and if $K/Z$ is solvable then $p\in\{13, 17, 19, 29, 31, 47\}$,
\item $p^d\in\{5^4, 7^4\}$, or
\item there is an imprimitivity decomposition on $2$ subspaces $D=V_1\times V_2$, and each $\norm K {V_i}/ \cent K {V_i}$ is $2$-transitive on $V_i$ and so it is classified in Theorem \ref{thm:passman},
\end{enumerate}
\end{thm}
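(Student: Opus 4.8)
The plan is to derive the statement from Liebeck's complete classification of the finite affine primitive permutation groups of rank $3$, discarding every entry incompatible with Hypotheses \ref{hypo}. Under these hypotheses we may write $G/Z=V\rtimes(K/Z)$, where $V=D$ is identified with $\mathbb{F}_p^d$ and $K/Z\leq\GL_d(p)=\GL(V)$. The centralizer $\cent K D$ is a normal $p'$-subgroup of $G$, hence contained in $\oh{p'}G=Z$, so $K/Z$ acts \emph{faithfully}; minimality of $D$ makes this action \emph{irreducible}; and the rank $3$ hypothesis says $K/Z$ has exactly two orbits on $V\setminus\{0\}$. The decisive extra constraint is that $K$, being a $p$-complement, is a $p'$-group, so \emph{$K/Z$ has order prime to the characteristic $p$}. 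Liebeck's theorem \cite{liebeck} sorts the admissible pairs $(V,K/Z)$ into four types: (A) one-dimensional semilinear, $K/Z\leq\Gamma(p^d)$; (B) imprimitive, preserving a decomposition $V=V_1\oplus V_2$ with $\dim V_1=\dim V_2$; (C) tensor-decomposable, preserving $V=V_1\otimes V_2$; and (D) a type in which $K/Z$ has a quasisimple component $L$ with $(L,V)$ on an explicit list, together with finitely many sporadic entries recorded by their $(p,d)$.

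The two ends of the list match the conclusions immediately. Type (A) is precisely conclusion (i). For type (B) I would record conclusion (iv): the stabilizer of the decomposition induces on each block a group $\norm K{V_i}/\cent K{V_i}$ which, by the structure of the imprimitive rank $3$ groups, is $2$-transitive on $V_i$; since $K/Z$ is a $p'$-group these constituents are $p$-solvable, so Passman's Theorem \ref{thm:passman} applies and classifies them. It remains to treat types (C) and (D), and this is where the hypotheses $p\geq 5$ and $p\nmid|K/Z|$ do the work.

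The strategy here is that coprimality to $p$ deletes at once every defining-characteristic example in Liebeck's tables, since the quasisimple groups of Lie type in characteristic $p$ (and the classical groups on their natural modules, such as the $\Omega_4^{+}(p)\cong\mathrm{SL}_2(p)\circ\mathrm{SL}_2(p)$ that otherwise fill out type (C) in every characteristic) all have order divisible by $p$. What is left are cross-characteristic configurations, and then $p\geq 5$ removes the remaining entries that require $p\in\{2,3\}$, including the extraspecial-normalizer examples. A check of the surviving cross-characteristic rows of types (C) and (D) then shows that they force either $d=2$ or one of the two degrees $p^d\in\{5^4,7^4\}$. In the former case, inspecting the $p'$-subgroups of $\GL_2(p)$ with two orbits on nonzero vectors shows that the solvable ones occur only for $p\in\{13,17,19,29,31,47\}$, giving conclusion (ii); the latter case is conclusion (iii).

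I expect the main obstacle to be purely one of faithful bookkeeping: one must pass through Liebeck's tables line by line and certify that, after the characteristic-$p$, characteristic-$2$ and characteristic-$3$ rows are struck out, nothing survives among the primitive non-semilinear types beyond $d=2$ and $p^d\in\{5^4,7^4\}$, and that the reading of the solvable $d=2$ configurations reproduces exactly the six primes in (ii) (in agreement with the rank $2$ data of Theorem \ref{thm:passman} and the standard analysis of the Singer-cycle and $2.A_4,2.S_4,2.A_5$ normalizers in $\GL_2(p)$). A secondary point requiring care is to confirm that the imprimitive type genuinely yields two $2$-transitive constituents on the $V_i$, so that Passman's theorem is legitimately applicable in (iv).
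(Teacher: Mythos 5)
Your proposal is correct and follows essentially the same route as the paper: both invoke Liebeck's classification of affine rank $3$ groups and strike out every class incompatible with $K/Z$ being a $p'$-group (equivalently, with $p$-solvability) and with $p\geq 5$, leaving exactly the semilinear, imprimitive, $d=2$, and $p^d\in\{5^4,7^4\}$ possibilities. The only cosmetic difference is that the paper delegates the solvable $d=2$ prime list $\{13,17,19,29,31,47\}$ to Foulser's Theorem 1.1 rather than re-deriving it from the structure of $p'$-subgroups of $\GL_2(p)$ as you sketch.
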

\begin{proof}
This is the content of \cite{liebeck}. Notice that cases (3)--(11) of part (A) of the main result are not $p$-solvable. The same argument excludes many cases in part (C) of the main result. The solvable cases in part (ii) appear in \cite[Theorem 1.1]{foulser}.
\end{proof}

When the rank of $G/Z$ on $D$ is $4$ then we have that $l(B)\leq 2$ using Brauer's formula from Lemma \ref{lem:brauer formula}. By Lemma \ref{lem:l(B)} we have $|\Irr(K|\lambda)|=2$ and by the main result of \cite{higgs}, $K/Z$ (and then $G$) is solvable. Thus we make use of the following classification due to D. A. Foulser.

\begin{thm}[Foulser]\label{thm:foulser}
Under Hypotheses \ref{hypo}, assume that the rank of $G/Z$ on $D$ is $4$.  Then
\begin{enumerate}
\item $K/Z\leq\Gamma(p^d)$,
\item $d=2$ and $5\leq p\leq 71$.
\item $p^d\in\{5^4, 7^3, 7^4\}$, or
\item there is an imprimitivity decomposition on $2$ or $3$ subspaces (so $D=V_1\times V_2$, or $D=V_1\times V_2\times V_3$) and each $\norm K {V_i}/ \cent K {V_i}$ is $2$-transitive on $V_i$ and so it is classified in Theorem  \ref{thm:passman}.
\end{enumerate}
\end{thm}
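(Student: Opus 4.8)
The plan is to reduce the statement to Foulser's classification of solvable primitive permutation groups of low rank and then eliminate the configurations forbidden by Hypotheses \ref{hypo}, mirroring the treatment of the rank-$3$ case via Liebeck's theorem. The solvability required to enter Foulser's classification is already available: as recorded just before the statement, the rank being $4$ forces $l(B)\leq 2$ through Brauer's formula (Lemma \ref{lem:brauer formula}), whence $|\Irr(K|\lambda)|=2$ by Lemma \ref{lem:l(B)}, and then Theorem \ref{lem:demeyerjanusz2} (Higgs's extension of the Howlett--Isaacs theorem) guarantees that $K/Z$, and therefore $G$, is solvable.

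Once solvability is in place, $G/Z$ is a solvable affine primitive permutation group of rank $4$, and Foulser's work in \cite{foulser} classifies exactly such groups. His classification sorts them into three types, which correspond to the four alternatives of the statement. First there are the semilinear groups, those with $K/Z\leq\Gamma(p^d)$, giving (1). Second there are the groups preserving a direct decomposition $D=V_1\times\cdots\times V_m$ whose summands are permuted transitively by $G/Z$; here the rank being $4$ restricts $m$ to $\{2,3\}$, and the stabilizer of each block induces a $2$-transitive group $\norm K {V_i}/\cent K {V_i}$ on $V_i$, which is then covered by Theorem \ref{thm:passman}, giving (4). Third there is a finite list of exceptional groups in small dimension, which after the pruning below yield (2) and (3).

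The remaining task is bookkeeping against Hypotheses \ref{hypo}. I would run through Foulser's tables and discard every entry incompatible with the present setting. The condition $p\geq 5$ removes all exceptional cases in characteristic $2$ and $3$; the requirement that $D$ be a minimal normal subgroup (equivalently that $K/Z$ act irreducibly, possibly imprimitively with the blocks permuted transitively) removes the genuinely reducible configurations while retaining the decompositions of (4); and tracking how the rank and dimension interact in the surviving exceptional entries isolates precisely the parameters $d=2$ with $5\leq p\leq 71$ in (2) and $p^d\in\{5^4,7^3,7^4\}$ in (3).

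The main obstacle I anticipate is not conceptual but one of faithful transcription: Foulser's invariants are phrased in the internal language of \cite{foulser}, and the delicate point is to translate them into the present normalization and to certify that no exceptional group with $p\geq 5$ and rank $4$ has been dropped or double-counted. A secondary care is needed in case (4), where one must confirm that each induced action on a summand $V_i$ is genuinely $2$-transitive so that Theorem \ref{thm:passman} is applicable, and that transitivity of $G/Z$ on the summands is consistent with $D$ remaining a minimal normal subgroup.
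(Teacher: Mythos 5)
Your proposal follows the paper's route exactly: the paper establishes solvability in the paragraph immediately preceding the statement via Brauer's formula (Lemma \ref{lem:brauer formula}), Lemma \ref{lem:l(B)} and Higgs's theorem (Theorem \ref{lem:demeyerjanusz2}), and its entire proof is then the citation ``See \cite[Theorem 1.2]{foulser}'', with any case contradicting Hypotheses \ref{hypo} (in particular $p\geq 5$ and minimality of $D$) pruned from Foulser's list just as you describe. Your anticipated bookkeeping concerns are legitimate but are exactly the translation work the paper delegates to \cite{foulser}.
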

\begin{proof}
See \cite[Theorem 1.2]{foulser}.
\end{proof}

In cases (ii) of Theorems \ref{thm:liebeck} and \ref{thm:foulser}, the structure of $K/Z$ is determined in \cite[Lemma 2.8]{mw}. This will be further discussed in Section \ref{sec:imprimitive}.

The remaining part of this paper will consist in verifying Theorem \ref{thm:reduced} for all possible structures in these classifications. In essence, we find contradictions unless $p^d\in\{5, 7\}$, which is only possible when $K/Z\leq\Gamma(p^d)$. For the remainder of the paper, we assume the hypotheses from Theorem \ref{thm:reduced}.

\subsection{The case $|D|=p^2$} In this section we prove Theorem \ref{thm:reduced} when $|D|=p^2$ except when $K/Z\leq \Gamma(p^2)$. We will deal with this case in Section \ref{sec:semilinear}.

\begin{pro}\label{pro:p2} Assume hypotheses of Theorem \ref{thm:reduced}. If $d=2$, then there is some $1\neq u\in D$ such that some $b\in{\rm Bl}(\cent G u)$ inducing $B$ satisfies $l(b)>1$.\end{pro}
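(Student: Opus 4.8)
The plan is to use Brauer's formula from Lemma~\ref{lem:brauer formula} (as applied in \eqref{formula}) to extract information about the block structure. Under the hypotheses of Theorem~\ref{thm:reduced} with $d=2$, we have $|D|=p^2$ and $G/Z$ is an affine primitive permutation group of rank $2$, $3$ or $4$. Since $D$ is a minimal normal subgroup and $p\geq 5$, I would first record that the nontrivial $G$-conjugacy classes of $p$-elements correspond exactly to the nontrivial orbits of $K/Z$ acting on $D$, and the number of such classes is $t-1$ where $t\in\{2,3,4\}$ is the rank. The goal is to produce one specific noncentral $p$-element $u$ and a block $b$ of $\cent G u$ with $b^G=B$ and $l(b)>1$; equivalently, by Lemma~\ref{lem:l(B)} applied inside the centralizer, to find a Clifford-theoretic situation where the relevant stabilizer contributes more than one Brauer character.

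\textbf{Main line of argument.} I would argue by contradiction: suppose that for \emph{every} noncentral $p$-element $u$ and every block $b\in\Bl(\cent G u)$ with $b^G=B$, we have $l(b)=1$. Then formula~\eqref{formula} collapses to
\begin{equation*}
5=k(B)=l(B)+\sum_{i=2}^{t}\#\{b\in\Bl(\cent G {x_i}) : b^G=B\},
\end{equation*}
so each noncentral class contributes exactly the number of induced blocks, each of defect... wait, each with $l(b)=1$. The key structural input is that for $u$ noncentral, $\cent G u$ contains $D$ (since $D$ is abelian and normal), so $\cent G u = D\rtimes \cent K u$ up to the central part, and the blocks of $\cent G u$ covering the fixed $\lambda\in\Irr(Z)$ with $l(b)=1$ force, via Lemma~\ref{lem:l(B)}, that $\lambda$ is fully ramified over the relevant $p'$-section of the stabilizer. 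I would then use Lemma~\ref{lem:demeyerjanusz} and Theorem~\ref{lem:demeyerjanusz2} (Higgs): the condition $l(b)=1$ uniformly would impose that $\lambda$ is fully ramified in each stabilizer $\cent K {x_i}$ modulo $Z$, which by Lemma~\ref{lem:demeyerjanusz} forces the nontrivial Sylow subgroups of these stabilizers to be noncyclic and of square order. I would combine this with the count $l(B)=|\Irr(K|\lambda)|$ and the permutation-group structure to bound $5$ from below or above incompatibly.

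\textbf{Exploiting rank and orbit sizes.} The crucial arithmetic is that the orbit sizes of $K/Z$ on $D\setminus\{1\}$ sum to $p^2-1$, and there are $t-1\in\{1,2,3\}$ of them. Since $K/Z\leq\Gamma(p^2)$ is false in the cases we handle here (those are deferred to Section~\ref{sec:semilinear}), $K/Z$ acts with an orbit structure coming from the classifications in Theorems~\ref{thm:passman}, \ref{thm:liebeck}, and \ref{thm:foulser}. For each noncentral $x_i$, the stabilizer index $|K:\cent K {x_i}|$ equals the orbit size, and the full-ramification hypothesis $l(b)=1$ constrains $|\cent K {x_i}Z/Z|$ to be a perfect square with noncyclic Sylow subgroups. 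The plan is to check that these square/noncyclicity constraints, together with $\sum_i |K:\cent K {x_i}| = p^2-1$ and the rank restrictions, are incompatible with $k(B)=5$ unless such a $u$ with $l(b)>1$ exists.

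\textbf{Anticipated main obstacle.} The hard part will be controlling the induction condition $b^G=B$ precisely: I must ensure that the block $b$ I select genuinely induces to $B$ (Brauer correspondence), not merely that $\cent G u$ has a block with large $l(b)$. This requires care with which blocks of the centralizer are Brauer-associated to $B$, and I expect to lean on the fact that the fixed central $\lambda$ is $G$-invariant so that the relevant blocks all cover $\{\lambda\}$ and the induction is governed by the first main theorem. A secondary subtlety is that the same noncentral element could a priori have all its induced blocks with $l(b)=1$ even when the stabilizer is large; disentangling when full ramification really occurs versus when the stabilizer simply splits into several $l=1$ blocks will be where the Higgs-type results (Theorem~\ref{lem:demeyerjanusz2}) must be deployed most carefully.
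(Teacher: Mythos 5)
Your proposal sets up the same contradiction hypothesis as the paper (every block $b$ of $\cent G u$ with $b^G=B$ has $l(b)=1$, for every $1\neq u\in D$), but the central deduction you build on it is invalid. Write $\cent G u=D\rtimes \cent K u$. Since $\Irr(B)=\Irr(G|\lambda)$, i.e.\ $B$ is the unique block of $G$ over $\lambda$, the total contribution of the subsection at $u$ in Brauer's formula is $\sum_{b^G=B}l(b)=|\Irr(\cent K u\,|\,\lambda)|$. The hypothesis that each \emph{individual} block over $\lambda$ satisfies $l(b)=1$ says nothing about the \emph{number} of such blocks: $\cent G u$ can have several blocks over $\lambda$, each with a single Brauer character. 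So it does not follow that $\lambda$ is fully ramified in $\cent K u$ over $Z$, and the square-order and noncyclic-Sylow constraints you want from Lemma \ref{lem:demeyerjanusz} and Theorem \ref{lem:demeyerjanusz2} do not apply --- those results are triggered by $|\Irr(\,\cdot\,|\lambda)|\in\{1,2\}$, i.e.\ by a small \emph{total} contribution, not by $l(b)=1$ per block. You flag exactly this as a ``secondary subtlety,'' but it is the crux, and nothing in your outline repairs it. (Note also that Lemma \ref{lem:l(B)} cannot be applied blockwise inside $\cent G u$, since $\oh{p'}{\cent G u}$ may properly contain $Z$.) A second, independent gap is one of scope: you exclude the case $K/Z\leq\Gamma(p^2)$ as ``deferred to Section \ref{sec:semilinear},'' but the proposition is stated for all $d=2$ and is \emph{used} in that excluded case --- the proof of Proposition \ref{pro:semilinear} for $p^d=5^2$ concludes precisely by deriving $l(b)=1$ for all subsections and contradicting Proposition \ref{pro:p2}. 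A proof that omits semilinear $K/Z$ would make that argument circular.

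The paper's actual proof goes through cyclic defect theory rather than ramification arithmetic: for $1\neq u\in D$, the block $b_u$ dominates a block $\overline{b}_u$ of $\cent G u/\langle u\rangle$ with \emph{cyclic} defect group $D/\langle u\rangle$ of order $p$ (this is where $d=2$ enters); comparing roots and using $l(\overline{b}_u)=l(b_u)=1$ together with \cite[Theorem 11.13]{N98} forces the inertial index to be $1$, whence $\cent G u=\cent G D=Z\times D$ for every $1\neq u\in D$. Thus $K/Z$ acts Frobeniusly on $D$, its Sylow subgroups are cyclic or generalized quaternion and have trivial Schur multiplier, so $\lambda$ extends to $G$ by Lemma \ref{lem:schur}, and Gallagher gives $|\Irr(G/Z)|=5$ --- impossible for a group with a normal Sylow $p$-subgroup of order $p^2$, $p\geq 5$. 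This route needs no classification of primitive permutation groups and works uniformly, including the semilinear case; your plan, by contrast, leans on the rank classifications and on a ramification statement that the hypothesis does not deliver.
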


\begin{proof}
Assume by way of contradiction that every $1\neq u\in D$ and $b_u\in \Bl(\cent G u)$ such that $b_u^G=B$ satisfies $l(b_u)=1$.  Write $C=\cent G u$. Notice that since $D\normal G$ and $D\sbs C$ we have that $D$ is a defect group of $b_u$ by \cite[Theorem 4.8]{N98}.

We claim first that $C=\cent G D$. By \cite[Theorem 9.10]{N98}, $b_u$ dominates a unique block $\overline{b}_u\in\Bl(C/\langle u \rangle)$ with defect group $D/\langle u \rangle$, which is cyclic since $d=2$. Notice that if $c\in\Bl(\cent G D)$ is a root of $b_u$, so that $c^C=b_u$, then $c^G=b_u^G=B$ by \cite[Problem 4.2]{N98}. Hence $c$ lies under $B$ by \cite[Lemma 9.8]{N98}, then $c=b$, where $b$ is the block of $\cent G D$ containing $1_D\times\lambda$ (see the comment before Step 5 of Theorem \ref{thm:reduction}). Then $\overline{b}$, the unique block of $\cent
G D/\langle u\rangle=\cent{C/\langle u\rangle}{D/\langle u \rangle}$ dominated by $b$, is a root of $\overline{b_u}$. Since $b$ is $C$-invariant, $\overline{b}$ is $C/\langle u \rangle$-invariant. By \cite[Theorem 9.10]{N98} we have $l(\overline{b}_u)=l(b_u)=1$. By \cite[Theorem 11.13]{N98}, $\overline{b}_u$ has inertial index 1, that is, $C/\langle u \rangle=\cent G D/\langle u \rangle$. Hence $C=\cent G D$, as desired.

Then $C=Z\times D$. In particular, the action of $G/ZD$ is Frobenius on $D$. This implies that the Sylow subgroups of $G/ZD$ are cyclic or generalized quaternion, by \cite[Theorem 6.10]{I08} and \cite[Theorem 6.11]{I08}, so they have trivial Schur multiplier. By Lemma \ref{lem:schur}, $\hat\lambda$ extends to $G$ and so does $\lambda$. Then $5=|\Irr(G|\lambda)|=|\Irr(G/Z)|$ by Gallagher's theorem. However there is no finite group with 5 conjugacy classes with normal Sylow $p$-subgroups of order $p^2$ with $p\neq 2,3$.
\end{proof}

The next propositions prove Theorem \ref{thm:reduced} when $|D|=p^2$ as long as $K/Z$ is not a subgroup of the semilinear group. That case will be done in Section \ref{sec:semilinear}.

\begin{pro}\label{pro:d=2 rank 2}
Assume that the rank of $G/Z$ on $D$ is $2$ and that $K/Z$ is not isomorphic to a subgroup of the semilinear group. Then Theorem \ref{thm:reduced} holds.
\end{pro}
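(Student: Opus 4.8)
By Theorem~\ref{thm:passman} and the hypothesis that $K/Z$ is not isomorphic to a subgroup of the semilinear group $\Gamma(p^d)$, we must be in case~(ii): $d=2$ and $K/Z$ is one of the explicit groups listed in \cite[Table 15.1]{sambale}, all of which embed in $\GL(2,p)$. In particular $|D|=p^2\notin\{5,7\}$, so it suffices to derive a contradiction from the standing assumption $|\Irr(G|\lambda)|=5$. The plan is to count $\Irr(B)=\Irr(G|\lambda)$ by Clifford theory over the normal abelian subgroup $DZ=D\times Z$ (note $D\cap Z=1$, as $D$ is a $p$-group and $Z=\oh{p'}G$). For $1\ne u\in D$ put $K_u=\cent K u$; since $K/Z$ acts faithfully on $D$ (one has $\cent K D=Z$) and $G/Z$ has rank $2$, there are exactly two $G$-orbits on $\Irr(DZ|\lambda)\cong\Irr(D)$, namely the fixed character $1_D\times\lambda$ and one orbit of length $p^2-1$ through $\theta\times\lambda$ for any $1_D\ne\theta\in\Irr(D)$. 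The members of $\Irr(B)$ lying over $1_D\times\lambda$ are precisely $\Irr(G/D|\lambda)\cong\Irr(K|\lambda)$, giving $l(B)=|\Irr(K|\lambda)|$ of them by Lemma~\ref{lem:l(B)}, while the rest lie over the long orbit and, by the Clifford correspondence at the stabilizer $C=\cent G u=D\rtimes K_u$, are counted by $|\Irr(C|\theta\times\lambda)|$. Thus
\[
5=|\Irr(K|\lambda)|+|\Irr(C|\theta\times\lambda)|.
\]

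I would then pin the two summands down. The first is at least $2$ by Theorem~\ref{thm:kb small}(v): were $l(B)=1$, then $k(B)=5$ would force $D\in\{\CCC_5,\QQQ_8,\DDD_8\}$, impossible for $D\cong\CCC_p\times\CCC_p$ with $p\ge5$. The second is at least $2$ by Proposition~\ref{pro:p2}, which furnishes a block $b$ of $\cent G u$ with $b^G=B$ and $l(b)>1$ (and indeed this summand equals $5-l(B)$, in accordance with Brauer's formula, Lemma~\ref{lem:brauer formula}). Hence the two summands are $2$ and $3$ in some order. The key observation is that $|\Irr(K|\lambda)|$ and $|\Irr(C|\theta\times\lambda)|$ are the numbers of irreducible $\lambda$-projective characters, in the sense of Schur, of $K/Z$ and of $K_u/Z$ respectively: whenever $\lambda$ (or $\theta\times\lambda$) extends they equal the class numbers $k(K/Z)$ and $k(K_u/Z)$, which I would obtain through Lemmas~\ref{lem:cyclic} and~\ref{lem:schur}; and whichever summand equals $2$ forces, by Higgs' Theorem~\ref{lem:demeyerjanusz2}, that the corresponding group is solvable, with odd Sylow subgroups fully ramified over $\lambda$ and a Sylow $2$-subgroup of restricted shape.

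To conclude, I would run through the finitely many groups of \cite[Table 15.1]{sambale}: for each, the prime $p$ and the point stabilizer $K_u/Z$ (of index $p^2-1$) are explicit, so the two projective-character counts can be computed—via Lemmas~\ref{lem:cyclic} and~\ref{lem:schur} to reduce to class numbers when $\lambda$ extends, and via the fully-ramified constraints of Lemma~\ref{lem:demeyerjanusz} and Theorem~\ref{lem:demeyerjanusz2} otherwise—and checked to never sum to $5$. The main obstacle is precisely the genuinely projective cases: the exceptional transitive linear groups in \cite[Table 15.1]{sambale} contain $\QQQ_8$, $\SL(2,3)$ or $\SL(2,5)$ as sections, so for suitable $\lambda$ the relevant Sylow subgroup of $K/Z$ or $K_u/Z$ has nontrivial Schur multiplier and $\lambda$ (or $\theta\times\lambda$) does not extend. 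There the two counts are not class numbers and must be controlled through Higgs' theorem and the fully-ramified dictionary of Lemma~\ref{lem:demeyerjanusz}; deciding exactly when a value of $2$ or $3$ is attainable is the crux of the argument.
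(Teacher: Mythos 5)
Your setup is fine as far as it goes: the decomposition $5=|\Irr(K|\lambda)|+|\Irr(G_\theta|\theta\times\lambda)|$, pinning the first summand to at least $2$ via Theorem~\ref{thm:kb small}(v) and the second via Proposition~\ref{pro:p2} and Brauer's formula, is all legitimate (modulo the small conflation of $\cent G u$ with the stabilizer $G_\theta$ of $\theta\in\Irr(D)$ — these are different subgroups in general, though both counts equal $k(B)-l(B)$ here). But the proof is not finished. You reduce the statement to a finite verification over the groups of \cite[Table 15.1]{sambale} that you never actually perform, and you yourself concede that ``deciding exactly when a value of $2$ or $3$ is attainable is the crux of the argument.'' An explicit reduction plus an acknowledged open crux is a gap, not a proof. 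Note also that the paper gets by with much less than your $\{2,3\}$ pinning: it only uses $|\Irr(K|\lambda)|=l(B)\leq 4$ (Lemma~\ref{lem:l(B)}), and works entirely on the $K$-side, never touching the stabilizer-side count.

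More seriously, the route you sketch for the crux rests on a false premise. You assert that the exceptional groups in the table contain $\QQQ_8$, $\SL(2,3)$ or $\SL(2,5)$ as sections whose Sylow subgroups have \emph{nontrivial} Schur multiplier, so that $\lambda$ fails to extend and one is stuck with genuinely projective counts. In fact $\QQQ_8$ has trivial Schur multiplier, and all Sylow subgroups of $\SL(2,3)$ and $\SL(2,5)$ are generalized quaternion or cyclic, hence have trivial Schur multiplier — and the actual proof exploits precisely this, in the opposite direction to your sketch. By \cite[Table 15.1]{sambale} there is $Z\subseteq N\normal K$ with $N/Z\cong\SL(2,3)$ or $\SL(2,5)$; by Lemma~\ref{lem:schur}, $\lambda$ \emph{does} extend to $N$, and Gallagher's theorem converts $\Irr(N|\lambda)$ into a degree-preserving copy of $\Irr(N/Z)$. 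For $\SL(2,5)$ this yields characters of six distinct degrees, hence at least six $K$-orbits on $\Irr(N|\lambda)$ and at least six characters in $\Irr(K|\lambda)$, contradicting $|\Irr(K|\lambda)|\leq 4$ immediately. For $\SL(2,3)$ one lists the seven characters of degrees $1,1,1,2,2,2,3$, observes the degree-$3$ character $\eta$ is $K$-invariant, and excludes the two possible shapes of $K/N$ from the table ($K/N$ cyclic, or $|K:N|=2q$) using extension along cyclic quotients, an orbit count forcing $|\Irr(K|\eta)|\leq 2$, and the fully-ramified constraints of Lemma~\ref{lem:demeyerjanusz} and Theorem~\ref{lem:demeyerjanusz2} against a cyclic Sylow $q$-subgroup. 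So the ``genuinely projective'' difficulty you defer to never arises, and the finite check you postpone is exactly the part of the argument that needed to be written.
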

\begin{proof}
In this case, $K/Z$ is one of the groups listed in (ii) of Theorem \ref{thm:passman}, so they can be found in \cite[Table 15.1]{sambale}. Also, by Lemma \ref{lem:l(B)} we have $|\Irr(K|\lambda)|\leq 4$.

Assume first that there is some $Z\sbs N\normal K$ with $N/Z\cong \SL(2,5)$. Then since the Sylow subgroups of $\SL(2,5)$ have trivial Schur multiplier, $\lambda$ extends to $N$, and by Gallagher's theorem there is a degree-preserving bijection $\Irr(N/Z)\rightarrow \Irr(N|\lambda)$. Thus $\Irr(N|\lambda)$ contains characters of 6 distinct degrees, so there are at least 6 $K$-orbits in $\Irr(N|\lambda)$, which would yield at least 6 distinct characters in $\Irr(K|\lambda)$, a contradiction.

Thus we are in the remaining cases, where there is some $Z\sbs N\normal K$ with $N/Z\cong\SL(2,3)$ and $K/N$ is either cyclic or $|K:N|=2q$ for some odd prime $q$. Again $\SL(2,3)$ has Sylow subgroups with trivial Schur multiplier, so $\lambda$ extends to $N$ and by Gallagher's theorem, $\Irr(N|\lambda)=\{\delta_1,\delta_2,\delta_3,\gamma_1,\gamma_2,\gamma_3,\eta\}$, where $\delta_i(1)=1$, $\gamma_i(1)=2$, $\eta(1)=3$. In particular, if $K=N$ we have a contradiction. By uniqueness, $\eta$ is $K$-invariant. Also, since $|\Irr(K|\lambda)|\leq 4$ there are at most 4 $K$-orbits on $\Irr(N|\lambda)$, forcing $|\Irr(K|\eta)|\leq 2$. If $K/N$ is cyclic then $\eta$ extends to $K$, which forces $|K/N|=2$ (because $N<K$). However, then some $\delta_i$ is also $K$-invariant and extends to $K$, which implies $|\Irr(K|\lambda)|>|\Irr(K|\delta_i)|+|\Irr(K|\eta)|=4$, a contradiction. Thus we assume $|K:N|=2q$ where $q$ is some odd prime. However, since $|\Irr(K|\lambda)|\leq 2$ we know $\eta$ is fully ramified in $Q$, where $Q/N\in\Syl_q(K/N)$, by Lemma \ref{lem:demeyerjanusz} and Theorem \ref{lem:demeyerjanusz2}. This contradicts the fact that $Q/N$ is cyclic, and we are done.
\end{proof}

\begin{pro}\label{pro:d=2}
Assume that the rank of $G/Z$ on $D$ is 3 or 4, $d=2$ and that $K/Z$ is not isomorphic to a subgroup of the semilinear group. Then Theorem \ref{thm:reduced} holds.
\end{pro}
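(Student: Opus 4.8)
The plan is to play Brauer's formula off against the rank. Since $Z=\zent G=\oh{p'}G$ is a $p'$-group we have $|\zent G|_p=1$, so by Lemma \ref{lem:brauer formula} and the identification $l(B)=|\Irr(K|\lambda)|$ of Lemma \ref{lem:l(B)} we may write $5=l(B)+\sum_{i=2}^{t}S_i$, where $x_2,\dots,x_t$ represent the nontrivial $G$-classes of $p$-elements (all of which lie in the normal Sylow subgroup $D$) and $S_i=\sum_{b\in\Bl(\cent{G}{x_i}),\, b^G=B}l(b)$. As each $x_i$ lies in the defect group $D$ it belongs to a $B$-subsection, so some block $b$ of $\cent{G}{x_i}$ satisfies $b^G=B$ and hence $S_i\geq 1$ for every $i$; by Proposition \ref{pro:p2} some $S_j\geq 2$. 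Since $t$ equals the rank of $G/Z$, in the rank $4$ case this forces $l(B)\leq 5-(2+1+1)=1$ and in the rank $3$ case $l(B)\leq 5-(2+1)=2$. In both cases $l(B)=1$ is impossible, because Theorem \ref{thm:kb small}(v) would then give $|D|\leq 8<p^2$; thus the rank $4$ case is excluded outright, and in the rank $3$ case we are reduced to $l(B)=|\Irr(K|\lambda)|=2$.

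Since $\lambda$ is $G$-invariant and $|\Irr(K|\lambda)|=2$, Higgs' theorem (Theorem \ref{lem:demeyerjanusz2}) applies: $K/Z$ is solvable, every odd-order Sylow subgroup $Q/Z$ is fully ramified over $\lambda$ and hence non-cyclic by Lemma \ref{lem:demeyerjanusz}, and the Sylow $2$-subgroup satisfies $|\Irr(\cdot\,|\lambda)|=2$. I would then feed this solvable rank $3$ group into Liebeck's classification (Theorem \ref{thm:liebeck}) with $d=2$ and $K/Z$ not semilinear: case (i) is excluded by hypothesis and case (iii) by $d\neq 4$, so only two possibilities survive, namely case (ii), where by \cite[Lemma 2.8]{mw} the group $K/Z$ normalizes a quaternion subgroup $\QQQ_8$, and case (iv), the imprimitive case $D=V_1\times V_2$ in which $K/Z$ embeds in the monomial group $(\CCC_{p-1}\times\CCC_{p-1})\rtimes\CCC_2$.

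In case (ii) I take $N\normal K$ with $N/Z\cong\QQQ_8$. As $\QQQ_8$ has trivial Schur multiplier, Lemma \ref{lem:schur} extends $\lambda$ to $N$, and Gallagher's theorem gives $\Irr(N|\lambda)=\{\alpha_1,\alpha_2,\alpha_3,\alpha_4,\beta\}$ with $\alpha_i(1)=\lambda(1)$ and $\beta(1)=2\lambda(1)$. The linear constituent $\alpha_1$ having $N/Z$ in its kernel and the unique degree-$2\lambda(1)$ character $\beta$ are each $K$-invariant, while $\{\alpha_2,\alpha_3,\alpha_4\}$ is a $K$-stable set disjoint from them; hence $\Irr(N|\lambda)$ splits into at least three $K$-orbits, forcing $|\Irr(K|\lambda)|\geq 3$ and contradicting $l(B)=2$.

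Case (iv) is where I expect the genuine difficulty, since a bare character count does not close it: writing $A$ for the preimage in $K$ of the diagonal torus, one checks that $|\Irr(K|\lambda)|=2$ is compatible with the monomial structure in certain configurations (for instance when $\lambda$ is fully ramified in $A$ and $K/Z$ is the full monomial group). To finish I would compute the two surviving contributions $S_i$ directly. For an axis vector $x$ the stabilizer $\cent{K}{x}/Z$ is cyclic of order $p-1$, whereas for an off-axis vector it is $\CCC_2$; in each case $\cent{G}{x}=D\rtimes\cent{K}{x}$ has $D$ as a normal Sylow $p$-subgroup with abelian complement, so the blocks inducing $B$ and their invariants $l(b)$ should be recoverable from $\Irr(\cent{K}{x}|\lambda)$. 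The main obstacle is precisely this step: pinning down the exact values $l(b)$ over the cyclic, respectively $\CCC_2$, centralizer quotients and showing that no admissible distribution satisfies $S_2+S_3=3$ while $K/Z$ stays transitive on each of its two nontrivial orbits (the rank $3$ hypothesis). This is the crux of the proposition, and it is the part I would expect to require the detailed monomial bookkeeping of Section \ref{sec:imprimitive}.
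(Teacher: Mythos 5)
Your opening reductions match the paper exactly: Brauer's formula together with Proposition \ref{pro:p2} forces the rank to be exactly $3$ with $l(B)=2$ (the rank $4$ case dies since $l(B)=1$ is excluded by Theorem \ref{thm:kb small}(v)), and Higgs' theorem then gives solvability of $K/Z$ plus the square conditions on Sylow orders. But from there the proposal does not close either surviving case of Theorem \ref{thm:liebeck}. In case (ii), your contradiction requires three $K$-orbits on $\Irr(N|\lambda)$, and for that you assert that the linear constituent $\alpha_1$ is $K$-invariant. This is unjustified: $\alpha_1=\hat\lambda$ is just one of the four extensions of $\lambda$ to $N$, and for $g\in K$ one only knows $\hat\lambda^g=\mu\hat\lambda$ for some linear $\mu\in\Irr(N/Z)$, so $K$ may well permute all four linear characters transitively. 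In that event $\Irr(N|\lambda)$ has exactly two $K$-stable pieces (the unique degree-$2\lambda(1)$ character and the four linears), which is perfectly consistent with $|\Irr(K|\lambda)|=2$, and no contradiction results. Case (iv) you explicitly leave open, and it is indeed the crux, so the proposal is genuinely incomplete rather than merely differently organized.

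The idea you are missing is that the distribution $5=l(B)+S_2+S_3=2+2+1$ can be converted into \emph{exact} centralizer indices by the same mechanism used in Proposition \ref{pro:p2}: for a subsection $(v,b_v)$ with $l(b_v)=1$, domination to $\overline{b}_v\in\Bl(\cent G v/\langle v\rangle)$ with cyclic defect group $D/\langle v\rangle$ and Dade's theory (the number of Brauer characters equals the inertial index) forces $\cent G v=\cent G D=ZD$, while $l(b_u)=2$ similarly forces $|\cent G u:ZD|=2$. Summing the two orbit sizes on $D\setminus\{1\}$ then yields the key identity $p^2-1=\tfrac{3}{2}|K:Z|$, namely Equation (\ref{eq:p2}), and this single equation dispatches both cases by arithmetic alone: in case (ii), for each $p\in\{13,17,19,29,31,47\}$ the value $|K:Z|=\tfrac{2}{3}(p^2-1)$ is divisible by some odd prime to exactly the first power, contradicting Higgs' square condition; in case (iv), solvability and $2$-transitivity on a space of size $p$ force $A/Z=\Gamma(p)$, hence $|K:Z|=2(p-1)^2$, and the identity collapses to $p+1=3(p-1)$, i.e., $p=2$, against $p\geq 5$. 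Your closing plan of ``recovering $l(b)$ from $\Irr(\cent K x|\lambda)$'' is in spirit this very step, but the paper executes it through cyclic-block theory rather than the monomial bookkeeping you anticipated; note also that with the identity in hand even your case (ii) argument becomes unnecessary.
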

\begin{proof}
First, we may assume $l(B)>1$ using Theorem \ref{thm:kb small}. By Proposition \ref{pro:p2}, there is some $p$-element $1\neq u\in D$ and $b\in \Bl(\cent G u)$ inducing $B$ such that $l(b)>1$. By Brauer's formula (see Lemma \ref{lem:brauer formula}), it follows that the rank of $G/Z$ on $D$ is exactly 3 and $l(B)=2$. Then $|{\rm Irr}(K|\lambda)|=2$ by Lemma \ref{lem:l(B)} and by Theorem \ref{lem:demeyerjanusz2} $K/Z$ is solvable, and it must be one of the solvable groups in cases (ii) and (iv) of Theorem \ref{thm:liebeck}. Furtheremore, $|K:Z|_2$ is an odd power of 2 and $|K:Z|_q$ is a square for every prime $q\neq 2$.

Let $u$, $v$ be representatives of the nontrivial $G$-conjugacy classes on $D$, and assume the $B$-subsections $(u, b_u)$ and $(v, b_v)$ satisfy $l(b_u)=2, l(b_v)=1$. Then as in the first two paragraphs of the proof of Proposition \ref{pro:p2} we have $|\cent G u:\cent G D|=2$ and $|\cent G v:\cent G D|=1$. Since $ZD=\cent G D$ we have
\begin{equation}\label{eq:p2}p^2-1=|D|-1=|G:\cent G u|+|G:\cent G v|=\frac{3}{2}|G/ZD|=\frac{3}{2}|K:Z|.\end{equation}
If $K/Z$ is as in case (ii) of Theorem \ref{thm:liebeck}, we have that $p\in\{13, 17, 19, 29, 31, 47\}$. By equation \ref{eq:p2} there is some odd prime $q$ such that $K/Z$ has Sylow $q$-subgroups of order $q$. This is a contradiction.



If $K/Z$ is as in case (iv) of Theorem \ref{thm:liebeck} then each space of imprimitivity has size $p$ and we can write $D=V_1\times V_2$. Then $K/Z$ has a subgroup $H/Z$ of index 2 such that $H/Z=A/Z\times B/Z$ and $A/Z$ is a solvable doubly-transitive group on $V_1$ (and $B/Z$ on $V_2$). Hence $A/Z$ is classified in Theorem \ref{thm:passman} and is a solvable $2$-transitive group of degree $p$. This is only possible when $A/Z = \Gamma(p)$, and then $|K:Z|=2(p-1)^2$, which contradicts Equation (\ref{eq:p2}) using that $p\geq 5$. \end{proof}

\subsection{Semilinear groups}\label{sec:semilinear} In this section we prove Theorem \ref{thm:reduced} for case (i) in Theorems \ref{thm:passman}, \ref{thm:liebeck} and \ref{thm:foulser}.

\begin{pro}\label{pro:semilinear}
Under the hypotheses of Theorem \ref{thm:reduced}, if $K/Z$ is isomorphic to a subgroup of $\Gamma(p^d)$ then either $p=5$ and $d=1,2$ or $p=7$ and $d=1$. Further, if $x$ is some nontrivial $p$-element of $G$ and $p^d=5^2$ then $l(b)=1$ where $b\in\Bl(\cent G x)$ induces $B$. In particular, Theorem \ref{thm:reduced} holds.
\end{pro}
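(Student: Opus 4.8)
The plan is to run Brauer's character counting formula (Lemma \ref{lem:brauer formula}) against the semilinear structure of $\overline{K}:=K/Z$. Write $S=\overline{K}\cap\Gamma_0(p^d)$, a cyclic normal subgroup of $\overline{K}$ with $\overline{K}/S$ cyclic of order $e\mid d$, and put $s=|S|$. Two local facts drive everything. First, since $\Gamma_0(p^d)$ acts fixed-point-freely on $D\setminus\{0\}$, for every $1\neq x\in D$ the stabilizer $\overline{K}_x=\cent K x/Z$ meets $S$ trivially and hence embeds in the cyclic group $\overline{K}/S$; thus $\cent K x/Z$ is cyclic. Arguing as in the first paragraphs of the proof of Proposition \ref{pro:p2}, the blocks of $\cent G x=D\rtimes\cent K x$ inducing $B$ are exactly those lying over $\lambda$, so by Lemma \ref{lem:l(B)} their total contribution is $|\Irr(\cent K x\mid\lambda)|$; since $\cent K x/Z$ is cyclic, $\lambda$ extends by Lemma \ref{lem:cyclic} and Gallagher gives $\sum_{b^G=B}l(b)=|\cent K x/Z|=:c_x$, a divisor of $e$. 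Second, lifting $S$ to $L\trianglelefteq K$ with $L/Z=S$, $\lambda$ extends to $L$ and $|\Irr(L\mid\lambda)|=s$, so counting $K$-orbits on $\Irr(L\mid\lambda)$ gives the lower bound $l(B)=|\Irr(K\mid\lambda)|\geq s/e$. This last bound is what lets us control $l(B)$ even though $\lambda$ need not extend to $K$.

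With these in hand, Brauer's formula reads $5=l(B)+\sum_{j}c_{x_j}$ over the nontrivial orbits, while $p^d-1=\sum_j|O_j|=se\sum_j 1/c_{x_j}$. I would now split by the rank given by the classification. In rank $2$ the group is transitive, so $S=\Gamma_0(p^d)$, $s=p^d-1$, $c_{x_1}=e$, and the two displays force $(p^d-1)/e+e\le 5$, whence $p^d-1\le e(5-e)\le 6$ and $p^d\in\{5,7\}$ with $d=1$. In rank $3$ (two orbits) and rank $4$ (three orbits) the bound $l(B)\geq s/e$ combined with $\sum_j c_{x_j}\ge t-1$ gives $s\le 3e$ resp.\ $s\le 2e$, and feeding this into $p^d-1\le se(t-1)$ yields $p^d-1\le 6e^2\le 6d^2$; since $p\geq 5$ this leaves only $d=1$ (handled as above, giving $p\in\{5,7\}$) and $d=2$ with $p^2=25$. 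This establishes the first assertion: $p^d\in\{5,7,25\}$.

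For the ``Further'' claim assume $p^d=5^2$, so $d=2$ and $e\in\{1,2\}$, and I must show every $c_{x_j}=1$, i.e.\ $l(b)=1$. If $e=1$ then $\overline{K}\le\Gamma_0(p^2)$ is cyclic, acts freely, and every $c_{x_j}=1$ automatically (in fact the resulting equation $s+24/s=5$ has no solution, so this subcase does not even occur). If $e=2$ then each $c_{x_j}\in\{1,2\}$; writing $a$ and $b$ for the numbers of orbits with $c_{x_j}=1$ and $c_{x_j}=2$, the two displays become $5=l(B)+a+2b$ and $s(2a+b)=24$, with $a+b\in\{2,3\}$ (rank $3$ or $4$) and $l(B)\ge s/2$. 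A short finite check over the admissible pairs $(a,b)$ with $b\ge1$ shows each one violates $24\le(2a+b)(10-2a-4b)$; hence $b=0$ and all $c_{x_j}=1$, that is, $l(b)=1$ for every nontrivial subsection. Finally, for ``In particular'': when $p^d=5^2$ we have $d=2$, so Proposition \ref{pro:p2} produces some nontrivial $u\in D$ and a block of $\cent G u$ inducing $B$ with $l>1$, contradicting what we just proved. Therefore $p^d=5^2$ cannot occur, leaving $p^d\in\{5,7\}$, and Theorem \ref{thm:reduced} follows in the semilinear case.

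The main obstacle is pinning down $l(B)=|\Irr(K\mid\lambda)|$: since $\overline{K}$ is only metacyclic, $\lambda$ may fail to extend to $K$ and one cannot simply set $l(B)=k(\overline{K})$. The device that circumvents this is the inequality $l(B)\ge s/e$ coming from the cyclic normal subgroup $S$, which is strong enough both to bound $p^d$ in the first part and to force free action in the second; getting this bound together with the clean identification $\sum_{b^G=B}l(b)=c_x$ is the crux, after which everything reduces to the elementary Diophantine bookkeeping sketched above. I emphasize that the target here is exactly $l(b)=1$ for $p^d=5^2$, \emph{not} the impossibility of $p^d=5^2$: that impossibility is obtained only afterwards, by playing the proven equality $l(b)=1$ against Proposition \ref{pro:p2}.
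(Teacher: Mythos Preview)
Your argument is correct and runs on the same rails as the paper's: both exploit the metacyclic structure $S\trianglelefteq \overline K$ with $|S|=s$ and $|\overline K/S|=e\mid d$, use the Clifford bound $l(B)\ge s/e$ (the paper writes this as $s\le lt$), combine it with orbit counting to force $p^d-1\le 6d^2$, and finish the ``In particular'' via Proposition~\ref{pro:p2}. Your extra ingredient is the exact identification $\sum_{b^G=B}l(b)=|\cent K x/Z|$, obtained by noting that point stabilizers in $\overline K$ meet $S$ trivially and are therefore cyclic; the paper never needs this and uses only $\sum_{b^G=B}l(b)\ge1$ throughout. In particular, for the ``Further'' claim at $p^d=25$ the paper's route is shorter than your $(a,b)$ case check: from $24\le m|K/Z|\le 4ml$ one gets $ml\ge 6$, while Brauer's formula gives $m+l\le 5$ with $l\ge 2$ and $m\le 3$, forcing $(m,l)\in\{(2,3),(3,2)\}$ and hence $m+l=5=k(B)$, so every contribution in Brauer's formula is exactly $1$.
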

\begin{proof}
In this situation there is a subgroup $H$ of $K$ containing $Z$ with $H/Z$ cyclic of order $s\, |\, p^d-1$ and index $|G:H|=t\, |\, d$.  Set $l=l(B)$. We have that $2\leq l\leq 4$. 
We have that $l=|\Irr (K |\lambda)|$ by Lemma \ref{lem:l(B)}.  On one hand we have that $|\Irr( H|\lambda)|= |H/Z|=s$ and on the other hand, by Clifford's theorem,  $|\Irr( H|\lambda)|\leq lt $. So $s\leq lt\leq ld$.  

Now, $\bar K=K/Z$ acts on $D\setminus\{1_D\}$ faithfully by conjugation. Let $m$ be the number of orbits of this action. We have 
$$p^d-1\leq m|K/Z| =mst\leq mld^2$$
and it follows that $p\in\{5, 7\}$ and $d\in\{1, 2\}$. If $p^d=7^2$ then $48\leq 4ml$ and since $m\leq 3$ this forces $(m,l)=(3,4)$.  Let $\{x_1,\dots,x_m\}$ be representatives of the nontrivial $G$-orbits of the action by conjugation on $D$ (these coincide with the representatives of the $K/Z$-orbits). Using Brauer's formula from Lemma \ref{lem:brauer formula}, $$k(B)=l(B)+\sum_{i=1}^m\sum_{b\in\Bl(\cent G x_i),\\b^G=B} l(b)\geq l+m$$
and then $5=k(B)\geq m+l$, a contradiction.

Now notice that if $p^d=5^2$ we have $24\leq 4ml$ and then $6\leq ml$. Arguing as before we get $5=k(B)\geq m+l$ and since $m\leq 3$ it follows that $(m,l)\in\{(2,3),(3,2)\}$. In any case, $k(B)=m+l$ and the last part follows.

It follows that Theorem \ref{thm:reduced} holds, unless possibly if $p^d=5^2$ but in this case the fact that $l(b)=1$ contradicts Proposition \ref{pro:p2}.
\end{proof}


\subsection{The exceptions}\label{sec:exceptions} In this section we deal with the case (iii) in Theorems \ref{thm:liebeck} and \ref{thm:foulser}. We remark that in all cases, $|V|<2500$ so \cite{roney-dougal} guarantees that the \cite{GAP} library of such groups is complete.

We begin with a general result which will help us in proving Theorem \ref{thm:reduced} in this and the next section. 

\begin{lem}\label{lem:contradiction}
Assume hypotheses of Theorem \ref{thm:reduced}, and that the rank of $G/Z$ on $D$ is $3$ or $4$. Then there is $\mu\in\Irr(D)$ such that $\mu\times\lambda\in\Irr(D\times Z)$ is fully ramified in its stabilizer $G_\mu$. In particular, $|G_\mu/(D\times Z)|$ is a square. If the rank is $4$ then this happens for all nontrivial $\mu\in\Irr(D)$. If the rank is $3$, $|G_\mu/(D\times Z)|_q$ is a square for all odd primes $q$  and for all nontrivial $\mu\in\Irr(D)$.
\end{lem}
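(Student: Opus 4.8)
The plan is to compute $|\Irr(G|\lambda)|$ by Clifford theory relative to the normal subgroup $N=D\times Z$ and to read off the numbers $a_\mu:=|\Irr(G_\mu|\mu\times\lambda)|$ from a single counting identity. Since $D$ is abelian normal and $Z=\zent G$ is central, $N\normal G$ and $\Irr(N|\lambda)=\{\mu\times\lambda:\mu\in\Irr(D)\}$; as $\lambda$ is $G$-invariant, the $G$-action on this set is just the action on $\Irr(D)$. By the Clifford correspondence
$$|\Irr(G|\lambda)|=\sum_{\mu}|\Irr(G_\mu|\mu\times\lambda)|,$$
the sum running over representatives of the $G$-orbits on $\Irr(D)$. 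The trivial orbit $\{1_D\}$ has $G_{1_D}=G$ and contributes $|\Irr(G/D|\lambda)|=|\Irr(K|\lambda)|=l(B)$ by Lemma \ref{lem:l(B)}. By Brauer's permutation lemma the number of $G$-orbits on $\Irr(D)$ equals the number on $D$, which is the rank $r\in\{3,4\}$ of $G/Z$; hence there are $m:=r-1$ nontrivial orbits, with representatives $\mu_1,\dots,\mu_m$. Writing $l=l(B)$ and $a_i=a_{\mu_i}\geq 1$, this yields the key identity
$$5=l+\sum_{i=1}^m a_i.$$

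Next I would rule out $l=1$. If $l(B)=1$, then since $k(B)=5$ and $p\geq 5$, Theorem \ref{thm:kb small}(v) forces $D\cong\CCC_5$. Then $K/Z$ embeds in $\Gamma(5)\cong\CCC_4$ and acts freely on $D\setminus\{1\}$, hence freely on $\Irr(D)\setminus\{1_D\}$, so each nontrivial $\mu$ has $G_\mu=N$ and $a_\mu=1$. The identity then reads $5=1+m$, i.e.\ $m=4$, contradicting $m=r-1\leq 3$. Thus $l\geq 2$.

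With $l\geq 2$ the rest is arithmetic. If $r=4$ then $m=3$ and $5=l+a_1+a_2+a_3\geq 2+3=5$, forcing $l=2$ and $a_1=a_2=a_3=1$; so $\mu\times\lambda$ is fully ramified in $G_\mu$ for every nontrivial $\mu$, and full ramification makes $|G_\mu:N|$ a perfect square. If $r=3$ then $m=2$ and $a_1+a_2=5-l\leq 3$, so some $a_i$, say $a_1$, equals $1$; this provides the required fully ramified $\mu$, with $|G_{\mu_1}:N|$ a square. For the other orbit either $a_2=1$ (again fully ramified, square index) or $a_2=2$, and in the latter case I would apply Higgs's theorem (Theorem \ref{lem:demeyerjanusz2}) to $N\normal G_{\mu_2}$ and the $G_{\mu_2}$-invariant character $\mu_2\times\lambda$: it gives that $\mu_2\times\lambda$ is fully ramified in every Sylow $q$-subgroup of $G_{\mu_2}/N$ for odd $q$, so $|G_{\mu_2}:N|_q$ is a square for all odd primes $q$. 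As all these properties are invariant under $G$-conjugacy, they pass to every nontrivial $\mu\in\Irr(D)$.

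The conceptual core is the counting identity $5=l+\sum a_i$ together with the identification of the trivial-orbit term as $l(B)$; once these are in place the bookkeeping is essentially forced. I expect the only genuinely delicate point to be the exclusion of $l=1$, which relies on the explicit free-action computation for $D\cong\CCC_5$. The odd-prime refinement in the rank $3$ case is precisely where Higgs's strengthening of the Howlett--Isaacs theorem is indispensable, since an orbit with $a_i=2$ cannot be ruled out there and one can only recover the square condition prime by prime at odd primes.
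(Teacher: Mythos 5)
Your proof is correct and takes essentially the same approach as the paper: both rest on the counting identity $k(B)=l(B)+\sum_i a_i$ over the $(\mathrm{rank}-1)$ nontrivial $G$-orbits on $\Irr(D)$ (the paper obtains the bound $l(B)\in\{2,3\}$ via Brauer's formula, you via the Clifford correspondence together with Brauer's permutation lemma, which is the same count in different clothing), and then both extract fully ramified characters, invoking Higgs's theorem (Theorem \ref{lem:demeyerjanusz2}) precisely in the rank~$3$ case where one orbit contributes two characters. The only noteworthy difference is that you explicitly exclude $l(B)=1$ using Theorem \ref{thm:kb small}(v) and the free action of $K/Z\leq\Gamma(5)$ on $\CCC_5$, a degenerate case the paper handles implicitly by its standing reduction to $l(B)>1$ (since $l(B)=1$ with $p\geq 5$ already forces $|D|=5$, the conclusion of Theorem \ref{thm:reduced}); your version is a harmless, slightly more self-contained variant.
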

\begin{proof} 
By using Brauer's formula from Lemma \ref{lem:brauer formula} we have $l(B)\in\{2, 3\}$. By Lemma \ref{lem:l(B)}, $l(B) = |\Irr(K|\lambda)| = |\Irr(G|1_D \times \lambda)|$, and it follows that  the number of characters in the block $B$ that do not have $D$ in their kernel is $n = k(B) - l(B) \in \{2, 3\}$.  If $n = 2$, then $G/Z$ has rank $3$ on $D$, and therefore there exist two nonconjugate characters $\mu_i \in \irr D$ for $i = 1, 2$ such that $\mu_i \times \lambda$ are fully ramified in their stabilizer. Hence, $|G_{\mu_i \times \lambda}/(D\times Z)| = |G_{\mu_i}/(D\times Z)|$ is a square.

In the second situation, when $n=3$ and $G/Z$ has rank $3$ over $D$, we observe that one of the nontrivial characters $\mu_1$ of $D$ satisfies the condition that $\mu_1\times \lambda$ is fully ramified in its stabilizer. Additionally, in this case, we have that $|G_{\mu_1}/(D\times Z)|$ is a square. On the other hand, if $\mu_2\in\Irr(D)$ is a non-trivial character that is not $G$-conjugate to $\mu_1$, then $|\irr{G_{\mu_2}|\mu_2\times\lambda}|=2$. By Theorem \ref{lem:demeyerjanusz2}, we conclude that $|G_{\mu_1}/(D\times Z)|_q$ is a square for every odd prime $q$.


Finally, if $n = 3$ and the rank of $G/Z$ on $D$ is 4, there exist three irreducible characters $\mu_i$  with $i=1,2,3$ such that $|G_{\mu_i \times \lambda}/(D\times Z)| = |G_{\mu_i}/(D\times Z)|$ is a square.
\end{proof}

\begin{pro}\label{pro:liebeckex}
Assume that $K/Z$ is as in case (iii) of Theorem \ref{thm:liebeck}. Then Theorem \ref{thm:reduced} holds.
\end{pro}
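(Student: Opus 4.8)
The plan is to argue by contradiction. Since case (iii) of Theorem \ref{thm:liebeck} forces $|D|=p^d\in\{5^4,7^4\}$ and neither $625$ nor $2401$ equals $5$ or $7$, it suffices to show that under Hypotheses \ref{hypo} with $K/Z$ as in this case one cannot have $|\Irr(G|\lambda)|=5$; this is exactly what it means for Theorem \ref{thm:reduced} to hold here. As $|D|<2500$, by \cite{roney-dougal} the \cite{GAP} library of primitive permutation groups of degrees $5^4$ and $7^4$ is complete, so the argument will be computational. First I would extract from this library the affine primitive permutation groups $G/Z$ of degree $5^4$ or $7^4$ and rank $3$ whose point stabilizer $K/Z$ is a $p'$-group and which act irreducibly on $D$ (so that $D$ is a minimal normal subgroup); these are the only groups compatible with Hypotheses \ref{hypo}.

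For each such group I would extract necessary numerical conditions from the general lemmas of Section \ref{sec:lemmas}. Since the rank is $3$, Lemma \ref{lem:contradiction} gives that for every nontrivial $\mu\in\Irr(D)$ the odd part $|G_\mu/(D\times Z)|_q=|(K/Z)_\mu|_q$ is a square for every odd prime $q$, and moreover that some nontrivial $\mu$ has $\mu\times\lambda$ fully ramified in $G_\mu$; by Lemma \ref{lem:demeyerjanusz} the nontrivial Sylow subgroups of the corresponding stabilizer $(K/Z)_\mu$ are then noncyclic. Here I would use that the two orbits of $K/Z$ on $\Irr(D)\setminus\{1_D\}$ have the same lengths and the same stabilizer orders as the orbits on $D\setminus\{1\}$, the action on characters being the contragredient of the action on $D$, so that these invariants can be read directly from the permutation action stored in \cite{GAP}.

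Finally, for each candidate $G/Z$ I would compute the orders of the two point stabilizers $(K/Z)_{\mu_1}$ and $(K/Z)_{\mu_2}$ and verify that at least one of them has $|(K/Z)_{\mu_i}|_q$ non-square for some odd prime $q$ (or, where that fails, has a nontrivial cyclic Sylow subgroup), contradicting the conditions of the previous paragraph. This rules out every group arising in case (iii) and hence establishes that Theorem \ref{thm:reduced} holds in this case. I expect the main obstacle to be organizational rather than conceptual: correctly isolating from the \cite{GAP} library exactly those groups meeting the $p'$-stabilizer and minimal-normality constraints, and faithfully transferring the abstract hypotheses of Theorem \ref{thm:reduced} --- which involve the central subgroup $Z$ and the cohomological notion of full ramification of $\mu\times\lambda$ --- into the concrete stabilizer-order computations, in particular justifying the contragredient duality so that the stabilizers computed in \emph{GAP} carry the correct orders.
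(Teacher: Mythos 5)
Your reduction to a finite \textsf{GAP} check is the same strategy as the paper's, and your use of Lemma \ref{lem:contradiction} (odd parts of all stabilizer orders square in rank $3$; some orbit fully ramified, hence square stabilizer order with noncyclic Sylow subgroups by Lemma \ref{lem:demeyerjanusz}) reproduces the paper's main tool, which does dispose of the majority of the groups in the two lists. But there is a genuine gap: you assert that this stabilizer-order criterion kills \emph{every} group arising in case (iii), and it does not. Several groups in the lists pass all of your tests --- for instance those where the point stabilizers on the nontrivial orbits are trivial or otherwise unobstructed --- and for these the paper has to argue through a completely different mechanism that your proposal never sets up: Brauer's formula forces $l(B)=|\Irr(K|\lambda)|\in\{2,3\}$, and one derives a contradiction by showing $|\Irr(K|\lambda)|>3$. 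Concretely, for $\mathrm{PrimitiveGroup}(5^4,i)$ with $i\in\{492,494,503\}$ all Sylow subgroups of $K/Z$ are cyclic, so $\lambda$ extends to $K$ and Gallagher gives $|\Irr(K|\lambda)|=|\Irr(K/Z)|>3$; for the remaining $5^4$ cases and for $\mathrm{PrimitiveGroup}(7^4,i)$, $i\in\{774,855\}$, one extends $\lambda$ to the cyclic Fitting subgroup $F$ with $|F/Z|>12$ and $|K:F|=4$ and counts; for $i\in\{802,805,965\}$ one shows \emph{both} nontrivial orbits carry at least two characters, forcing $l(B)\leq 1$; and $\mathrm{PrimitiveGroup}(7^4,775)$ needs a bespoke Clifford-theoretic argument through a central Sylow $3$-subgroup of $K/Z$, using Higgs' theorem twice. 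None of these is recoverable from stabilizer orders alone, so your computation as described would terminate on these groups without a contradiction.

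There is also a logical slip in your final verification step. To contradict the existence of the fully ramified orbit guaranteed by Lemma \ref{lem:contradiction}, the obstruction (non-square order or a nontrivial cyclic Sylow subgroup) must hold for \emph{both} orbit stabilizers, since the fully ramified character may lie in either orbit; your ``at least one of them'' only suffices for the other half of the lemma, namely a non-square odd part $|(K/Z)_\mu|_q$, which is forbidden for \emph{all} nontrivial $\mu$. Note moreover that the prime $2$ behaves differently: by Theorem \ref{lem:demeyerjanusz2} the stabilizer of the two-character orbit may perfectly well have a cyclic Sylow $2$-subgroup of order $2$, so a cyclic Sylow $2$-subgroup in one stabilizer is not by itself an obstruction. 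Repairing the proposal requires adding the $l(B)\leq 3$ bookkeeping from Brauer's formula together with the extension-plus-Gallagher counting arguments (Lemmas \ref{lem:cyclic} and \ref{lem:schur}), exactly as the paper does.
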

\begin{proof}
\textit{The case $p^d=5^4$:}
The list of groups verifying this checked in GAP is formed by the groups $\mathrm{PrimitiveGroup}(5^4,i)$ where 
\begin{align*}
i\in\{&207, 210, 218, 219, 220, 221, 222, 223, 224, 225, 328, 329, 330, 331, 332, 333,
 334, 410,\\& 411, 412, 413, 417, 418, 419, 434, 435, 436, 446, 487, 490, 492, 494, 496, 503, 520\}.
 \end{align*}

 
We observe that if $i\not\in\{487,492, 494, 496, 503\}$ there is $1_D\not=\mu \in \irr D$ such that $|G_\mu/(D\times Z)|_3=3$ a contradiction with Lemma \ref{lem:contradiction}.
  
 If $i\in \{487,496\}$, there are nontrivial and not $G$-conjugate $\mu_1,\mu_2\in{\rm Irr}(D)$ such that $|G_{\mu_1}/D\times Z|=2=|G_{\mu_2}/D\times Z|$, a contradiction since at least one of these has to be a square by Lemma \ref{lem:contradiction}.  If $i\in \{492, 494,503\}$, then $\lambda$  extends to $K$ (since every Sylow subgroup of $K/Z$ is cyclic, using Lemma \ref{lem:cyclic} and \cite[Theorem 5.10]{N18}) and by Gallagher's theorem $l(B)=|\irr {K\,|\, \lambda}|=|{\rm Irr}(K/Z)|>3$, a contradiction.  In the remaining cases $\lambda$ extends to  $F$ where $F/Z=\fit {K/Z}$ is a cyclic group of order strictly greater than 12. By Gallagher's theorem, $|\Irr(F| \lambda)|=|\Irr(F/Z)|=|F/Z|>12$ and $|K:F|=4$ so it follows that $l(B)=|\Irr(K| \lambda)| >3$, again a contradiction.
 
\textit{Case $p^d=7^4$:}
 The list of groups verifying this checked in GAP is formed by the groups $\mathrm{PrimitiveGroup}(7^4,i)$ where 
\begin{align*}
i\in\{&721, 723, 774, 775, 783, 785, 789, 797, 800, 802, 805, 843, 844, 847, 849, 850, 851, \\&852, 855, 869, 873, 876, 877, 878,879, 880, 885, 886, 890, 891, 894, 962, 964,965,\\& 992, 1015, 1039, 1049 \}.\end{align*}

If $i\in \{721, 723\}$ we have that $\lambda$ extends to $K$ and
     $|\irr{K\, |\, \lambda}|>3$, a contradiction. If $i\in \{774,855\}$,   $\lambda$ extends to  $F$ where $ F/Z=\fit {K/Z}$ is a cyclic group of order strictly larger than 12. By Gallagher's theorem $|\Irr(F|\lambda)|=|\Irr(F/Z)|=|F/Z|>12$. Since $|K:F|=4$ it follows that $|\Irr(K | \lambda)| >3$, a contradiction.  If $i\in \{  802, 805, 965 \}$, we can find non $G$-conjugate character $\mu_1, \mu_2\in\Irr(D)$ such that $|\Irr(G|\mu_k\times\lambda)|>1$ for $k=1,2$ and this forces $|\Irr(G|1_D\times\lambda)|=|\Irr(K|\lambda)|=1$, a contradiction. 
     
     
Now, if $i=775$ then $|K/Z|_3=3$ and $|K/Z|_5=5$. Then, using Lemma \ref{lem:l(B)} and Theorem \ref{lem:demeyerjanusz2} we have that $l(B)=|\Irr(K|\lambda)|=3$. If $P/Z\in\Syl_3(K/Z)$, since $P/Z$ is cyclic of order 3 we have that by Lemma \ref{lem:cyclic} $\lambda$ extends to $P$ and $|\Irr(P|\lambda)|=3$ by Gallagher's theorem. Furthermore, $P/Z$ is central in $K/Z$. Since $|K/P|$ is not divisible by $3$, there exists some $K$-invariant $\xi\in\Irr(P|\lambda)$.  By Clifford theory, $|\Irr(K|\xi)|\leq 2$ and therefore, by Lemma \ref{lem:demeyerjanusz} and Theorem \ref{lem:demeyerjanusz2}, $|K/P|_5=|K/Z|_5$ is a square, a contradiction.


 In the remaining cases, there is a nontrivial $\mu \in \irr D$ such that $|G_\mu/(D\times Z)|_3$  is not square, a contradiction with Lemma \ref{lem:contradiction}.   \end{proof}

\begin{pro}\label{pro:foulserex}
Assume that $K/Z$ is as in case (iii) of Theorem \ref{thm:foulser}. Then Theorem \ref{thm:reduced} holds.
\end{pro}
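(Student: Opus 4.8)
The plan is to follow the template of Proposition \ref{pro:liebeckex}, examining one by one the affine primitive groups of degree $p^d\in\{5^4,7^3,7^4\}$ listed in \cite{GAP} (complete by the remark opening this section, since each has $|V|<2500$) and eliminating them all. The situation is more rigid than in the rank-$3$ case: as noted before Theorem \ref{thm:foulser}, the rank-$4$ hypothesis together with Brauer's formula (Lemma \ref{lem:brauer formula}) forces $l(B)=2$, and hence $|\Irr(K|\lambda)|=2$ by Lemma \ref{lem:l(B)}. Thus Theorem \ref{lem:demeyerjanusz2} applies to the triple $(K,Z,\lambda)$ and gives two global constraints on $|K/Z|=|G/ZD|$: its $2$-part is an odd power of $2$, and its $q$-part is a perfect square for every odd prime $q$. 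Moreover, since the rank is $4$, Lemma \ref{lem:contradiction} shows that $|G_\mu/(D\times Z)|$ is a perfect square for \emph{every} nontrivial $\mu\in\irr D$, not merely for one orbit representative.

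The first filter I would apply is the purely numerical one coming from Theorem \ref{lem:demeyerjanusz2}: for each candidate I compute $|K/Z|$ and discard it unless its $2$-part is an odd power of $2$ and each odd $q$-part is a square. I expect this to remove the large majority of the groups at once. For the survivors I would compute the orbits of $K/Z$ on $\irr D\setminus\{1_D\}$, choose representatives $\mu$, and apply the local test from Lemma \ref{lem:contradiction}: exhibiting a single nontrivial $\mu$ with $|G_\mu/(D\times Z)|$ not a perfect square already yields a contradiction.

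For the groups passing both tests I would reuse the extension arguments of Proposition \ref{pro:liebeckex}. When every Sylow subgroup of $K/Z$ is cyclic, Lemma \ref{lem:cyclic} (together with Lemma \ref{lem:schur}) shows that $\lambda$ extends to $K$, whence Gallagher's theorem gives $l(B)=|\Irr(K/Z)|>2$, a contradiction; and when $\lambda$ extends only to $F$ with $F/Z=\fit{K/Z}$ cyclic, the same Gallagher--Clifford count with the bounded index $|K:F|$ still forces $l(B)>2$.

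The main obstacle will be the handful of residual groups surviving both filters, the rank-$4$ analogues of the $7^4$, $i=775$ case of Proposition \ref{pro:liebeckex}. For these I would argue inside a single Sylow subgroup $Q/Z\in\Syl_q(K/Z)$ for the critical odd prime $q$: producing a $K$-invariant $\xi\in\Irr(Q|\lambda)$ and using $|\Irr(K|\xi)|\leq l(B)=2$ together with Lemma \ref{lem:demeyerjanusz} and Theorem \ref{lem:demeyerjanusz2} to force $|K/Q|_q$ to be a square, contradicting the structure read off from \cite{GAP}. The degree $7^3$ is genuinely new, since it does not appear in Liebeck's classification, but the factorization $|D|-1=342=2\cdot 3^2\cdot 19$ makes the orbit-size and square constraints especially restrictive, and I expect these groups to fall to the same analysis.
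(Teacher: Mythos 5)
Your proposal matches the paper's proof in all essentials: the paper likewise runs through the GAP lists for $5^4$, $7^4$ and $7^3$, first eliminating groups via the Higgs-type constraint (for $5^4$, phrased as: $G/(D\times Z)$ has no nontrivial cyclic Sylow $q$-subgroup for odd $q$, since $\lambda\times 1_D$ is fully ramified in each such Sylow subgroup), then applying exactly your square test from Lemma \ref{lem:contradiction} to a nontrivial $\mu\in\Irr(D)$, and finally using the extension-plus-Gallagher count for the few groups where $\lambda$ extends to $K$ (the cases $i\in\{634,638\}$ for $7^4$ and $i=45$ for $7^3$). The only difference is that your anticipated residual Sylow-subgroup argument (the analogue of the $7^4$, $i=775$ case of Proposition \ref{pro:liebeckex}) turns out to be unnecessary: these three filters already dispose of every group on the lists.
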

\begin{proof}

 \textit{The case $p^d=5^4$.} 
The list of groups verifying this checked in GAP is formed by the groups $\mathrm{PrimitiveGroup}(5^4,i)$ where
\begin{align*}
i\in\{&209, 216, 217, 284, 285, 286, 287, 288, 289, 290, 291, 292, 293, 323, 324, 325, 326, \\&327, 337, 338, 339, 340, 341,
  343, 345, 404, 414, 416, 424, 425, 426, 442, 443, 444,\\ &445, 460, 461, 463, 466, 484, 511, 518, 519, 527, 528, 529,
  532, 533, 538, 541\}.
  \end{align*}
  Since $l(B)=2$ then $|\Irr(G|\lambda\times 1_D)|=2$ and by Lemma \ref{lem:demeyerjanusz}, $\lambda\times 1_D$ is fully ramified in $Q$ for any $Q/(D\times Z)\in\Syl_q(G/(D\times Z))$ and any odd prime $q$. In particular, $G/(D\times Z)$ can not have nontrivial cyclic Sylow $q$-subgroups for any odd $q$ by Lemma \ref{lem:cyclic}. This excludes every case above except for
$$i\in\{216, 217, 288, 289, 290, 291, 292, 293, 323, 324, 337, 338, 460, 461, 484, 533\}.$$
For all these groups it is possible to find some nontrivial $\mu\in \irr D$ such that $|G_\mu/(D\times Z)|_3=3$, contradicting Lemma \ref{lem:contradiction}.

 \textit{Case $p^d=7^4$:}
 The list of groups verifying this checked in GAP is formed by the groups $\mathrm{PrimitiveGroup}(7^4,i)$ where 
 \begin{align*}
 i\in\{& 634, 638, 665, 666, 674, 675, 691, 709, 710, 736, 737, 738, 740, 755, 757, 782, 786,\\& 787, 788, 790, 793, 803, 820,
  822, 823, 825, 828, 845, 846, 848, 867, 868, 870, 871,\\& 875, 892\}.  \end{align*} If $i\in\{634, 638\}$ we have that $\lambda$ extends to $K$  and hence $|\irr{K\,|\,\lambda}|>3$, a contradiction. In the remaining cases,  there is a nontrivial $\mu \in \irr D$ such that $|G_\mu/(D\times Z)|$  is not square, a contradiction with Lemma \ref{lem:contradiction}.

 \textit{Case $p^d=7^3$:}
The list of groups verifying this checked in GAP is formed by the groups $\mathrm{PrimitiveGroup}(7^4,i)$ where 
$$i\in\{45, 60, 61, 70, 72, 75,78, 84,85\}.$$ If $i=45$  we have that $\lambda$ extends to $K$  and hence $|\irr{K\,|\,\lambda}|>3$, a contradiction.  In the remaining cases,  there is a nontrivial $\mu \in \irr D$ such that $|G_\mu/(D\times Z)|$  is not square, a contradiction with Lemma \ref{lem:contradiction}.
\end{proof}

\subsection{The imprimitive case}\label{sec:imprimitive}

In this section we prove Theorem \ref{thm:reduced} for case (iv) in Theorem \ref{thm:liebeck} and \ref{thm:foulser}.

In this case, we can write the Sylow $p$-subgroup of $G$ as $D = V_1 \times V_2$ or as $D = V_1 \times V_2 \times V_3$. In this context, the subgroups $V_i$ are sometimes called  imprimitivity spaces. Let $A/Z=\norm {{K/Z}}{V_1}/\cent{K/Z}{V_1}$. Then $K/Z\cong (A/Z)\wr S$ where $S\leq \SSS_n$, and $n$ is the number of imprimitivity spaces (this can be found in \cite[Theorem 2.8]{mw}). Recall that we write $|D|=p^d$. Let $|V_1|=p^m$ so that $d=2m$ or $d=3m$, and let $H/Z$ be the base group of $(A/Z)\wr S$, which is a direct product of $n$ copies of $A/Z$, so that $H\normal K$ and $K/H\cong S$. We assume this notation and structure for this whole section.

The following result deals with the imprimitive case in rank 3.

\begin{pro}\label{pro:imp3}
Assume that $K/Z$ is as in case (iv) of Theorem \ref{thm:liebeck}. Then Theorem \ref{thm:reduced} holds.
\end{pro}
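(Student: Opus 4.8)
The plan is to exploit the wreath-product structure recorded at the start of this subsection together with Lemma~\ref{lem:contradiction}, forcing $p^m-1$ to be a perfect square, and then to rule this out arithmetically. First I would dispose of the case $m=1$, i.e. $d=2$: here each imprimitivity space $V_i$ has order $p$, so $A/Z$ is $2$-transitive of degree $p$ and hence $A/Z=\Gamma(p)$, and this situation is precisely the one excluded in the last paragraph of Proposition~\ref{pro:d=2}. Thus I may assume $m\ge 2$, so $d=2m\ge 4$ and $p^m\ge 25$. Recall $K/Z\cong(A/Z)\wr S$ with $S\le\SSS_2$ and $A/Z$ acting $2$-transitively on $V_1$ of degree $p^m$. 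Since the rank of $G/Z$ on $D$ equals the number of $K/Z$-orbits on $D$ (Step~5 of Theorem~\ref{thm:reduction}), and the base group $(A/Z)^2$ already has four orbits on $D=V_1\times V_2$, rank $3$ forces $S=\SSS_2$ and hence $|K/Z|=2|A/Z|^2$.

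Next I would compute the two relevant stabilizers on $\Irr(D)\cong\Irr(V_1)\times\Irr(V_2)$. As $A/Z$ is transitive on $V_i\setminus\{0\}$, Brauer's permutation lemma gives transitivity on $\Irr(V_i)\setminus\{1\}$, so $|\Stab_{A/Z}(\mu_i)|=|A/Z|/(p^m-1)$ for nontrivial $\mu_i$. The two nontrivial $K/Z$-orbits are then the characters with exactly one nontrivial component, of size $2(p^m-1)$ (the two ``mixed'' base-group orbits being fused by the swap in $S=\SSS_2$), and those with both components nontrivial, of size $(p^m-1)^2$. Writing $c=|A/Z|/(p^m-1)\in\ZZ$, so that $|K/Z|=2c^2(p^m-1)^2$, orbit--stabilizer yields for representatives $\mu^{(1)},\mu^{(2)}$ of these orbits
\[
|G_{\mu^{(1)}}/(D\times Z)|=c^2(p^m-1),\qquad |G_{\mu^{(2)}}/(D\times Z)|=2c^2 .
\]

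Now I would invoke Lemma~\ref{lem:contradiction}: since the rank is $3$, some nontrivial $\mu\in\Irr(D)$ has $\mu\times\lambda$ fully ramified in its stabilizer, whence $|G_\mu/(D\times Z)|$ is a perfect square. Because the $2$-adic valuation of $2c^2$ is odd, $2c^2$ is never a square, so $\mu$ cannot be conjugate to $\mu^{(2)}$; it is therefore conjugate to $\mu^{(1)}$, and $c^2(p^m-1)$—hence $p^m-1$—is a perfect square. For $m\ge 2$ this is impossible: if $m$ is even then $p^m$ is itself a square, so $p^m-1=y^2$ would exhibit two consecutive squares, absurd as $p^m\ge 25$; if $m$ is odd then $p^m=y^2+1$ has no solution with $p\ge 5$ by Mihailescu's theorem (the former Catalan conjecture). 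This contradiction proves the proposition.

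The step I expect to be the main obstacle is pinning down the stabilizer orders exactly, in particular justifying the extra factor $2$ in $|G_{\mu^{(2)}}/(D\times Z)|$ (equivalently, that a coordinate-swapping element genuinely fixes $\mu^{(2)}$ while none fixes $\mu^{(1)}$) and the transitivity of $A/Z$ on the dual $\Irr(V_i)$, since the entire argument turns on the parity of the $2$-adic valuations $v_2(2c^2)$ and $v_2(c^2(p^m-1))$.
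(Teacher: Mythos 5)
Your proof is correct and takes essentially the same approach as the paper: both transfer the orbit sizes $1$, $2(p^m-1)$, $(p^m-1)^2$ to $\Irr(D)$, apply Lemma \ref{lem:contradiction} to force a square stabilizer index, and conclude by showing the stabilizer for the large orbit has order twice a square (never a square) while the other case forces $p^m-1$ to be a square, impossible for $m\geq 2$. The only cosmetic differences are that you derive the orbit sizes and $S\cong\SSS_2$ directly from the wreath-product structure rather than citing Liebeck's Table 12, and you invoke Mihailescu's theorem where the paper cites Mordell's book.
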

\begin{proof}
In this case, $K/Z=(A/Z)\wr \SSS_2$. Write $|V_1|=p^m$ so that $d=2m$ (and recall that $d>2$ by Proposition \ref{pro:d=2}). The orbits of $K/Z$ on $D$ have sizes $1$, $2(p^m-1)$ and $(p^m-1)^2$, by \cite[Table 12]{liebeck}. By \cite[Theorem 18.10]{Hup98}, the orbits of the action of $K/Z$ on $D$ have exactly the same sizes as the $K$-orbits on $\irr D$. Notice that if $\alpha, \beta$ are representatives of the nontrivial $K$-orbits on $D$ then at least one of $\alpha$ and $\beta$ are fully ramified in their stabilizer in $G$. 

Assume first that $\alpha$ is fully ramified in $G_\alpha$ so that $|G_\alpha:ZD|$ is a square. We have $|G:G_\alpha|=2(p^m-1)$, $|G_\alpha:ZD|=s^2$ so $|G:ZD|=2s^2(p^m-1)$. Now $|G:ZD|=|K:Z|=2|A/Z|^2$,  which forces $p^m-1$ to be a square. This is a contradiction with \cite[p. 301]{mordell} if $m$ has an odd prime divisor, so we assume that $m$ is a $2$-power, so $p^m-1=(p^{\frac{m}{2}}+1)(p^{\frac{m}{2}}-1)$ which can not be a square.

Thus we have that $\beta$ is fully ramified in $G_\beta$. We have $|G:ZD|=2|A:Z|^2$ is not a square. Since $|G:G_\beta|=(p^d-1)^2$ and $|G_\beta :ZD|=|G:ZD|/|G:G_\beta|$ can not be a square, a contradiction since $\beta$ is fully ramified in $G_\beta$.
\end{proof}

In what follows we will be dealing with imprimitive groups of rank 4, that is, those in case (iv) of Theorem \ref{thm:foulser}. In this case, we may have $2$ or $3$ imprimitivity spaces, and in these cases respectively $S\cong \SSS_2$ or $S\cong \CCC_3, \SSS_3$. Recall that in this case, Brauer's formula forces $|\Irr(K|\lambda)|=l(B)\leq 2$ and that we assumed $l(B)>1$ via Theorem \ref{thm:kb small}. Thus by Theorem \ref{lem:demeyerjanusz2}, $G$ is solvable. Then the possible structure of $A/Z$ is determined by Huppert's theorem, i.e., $A/Z$ is one of the solvable groups from Theorem \ref{thm:passman}. We deal with each of these cases separately.


\begin{pro}\label{pro:imp4}
Assume that $S\cong\SSS_2$. Then  $|\Irr(K|\lambda)|>2$. In particular, Theorem \ref{thm:reduced} holds.
\end{pro}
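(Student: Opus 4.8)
The plan is to compute $|\Irr(K|\lambda)|$ exactly in terms of $r:=|\Irr(A|\lambda)|$ using the wreath structure $K/Z\cong (A/Z)\wr\SSS_2$, and then to prove that $r\geq 2$; together these give $|\Irr(K|\lambda)|\geq 5$, contradicting the bound $l(B)=|\Irr(K|\lambda)|\leq 2$ that rank $4$ imposes through Brauer's formula. Hence no group with $S\cong\SSS_2$ can occur, which is exactly what the proposition records.

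First I would count $\Irr(K|\lambda)$. Let $H$ be the base group, so $H/Z=(A_1/Z)\times(A_2/Z)$ with $A_i/Z\cong A/Z$ and $K/H\cong\CCC_2$ interchanging the two factors; concretely $A_1=\cent H {V_2}$ and $A_2=\cent H {V_1}$. Since $K$ acts faithfully on $D$ (as $\cent K D=\oh{p'}G=Z$), the two factors commute modulo $Z$, and the $2$-cocycle attached to $\lambda$ on $H/Z$ carries no mixing term; thus $\Irr(H|\lambda)$ is parametrized by ordered pairs from $\Irr(A|\lambda)$, so that $|\Irr(H|\lambda)|=r^2$, with $K/H$ acting by transposing the two coordinates. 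By Clifford theory the $r$ diagonal characters are $K$-invariant and, $K/H$ being cyclic, each extends to $K$ by Lemma~\ref{lem:cyclic}, contributing $2$ members of $\Irr(K|\lambda)$ apiece via Gallagher's theorem; the remaining $r^2-r$ characters split into $\binom{r}{2}$ swap-orbits of length $2$, each inducing irreducibly to a single character of $K$. This yields
\[
|\Irr(K|\lambda)|=2r+\binom{r}{2}=\frac{r(r+3)}{2}.
\]

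The crux is the bound $r\geq 2$, equivalently that $\lambda$ is not fully ramified in $A$. By Lemma~\ref{lem:demeyerjanusz} it is enough to produce a nontrivial cyclic Sylow subgroup of $A/Z$, and I would argue from the structure of the $2$-transitive group $A/Z$ in Theorem~\ref{thm:passman}. When $A/Z\leq\Gamma(p^m)$ it contains the cyclic group $\Gamma_0(p^m)$ of order $p^m-1$ with cyclic quotient of order $t\mid m$; any prime $q\mid p^m-1$ with $q\nmid t$ then has its nontrivial Sylow subgroup inside $\Gamma_0(p^m)$, hence cyclic. Such a $q$ is provided by Zsigmondy's theorem, since a primitive prime divisor of $p^m-1$ satisfies $q\equiv 1\pmod m$ and so $q\nmid m$; the sole relevant exception, $m=2$ with $p+1$ a power of $2$, is handled by observing that $p^2-1$ has an odd prime divisor for every $p\geq 5$. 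In the finitely many non-semilinear cases, where $m=2$ and $A/Z$ appears in \cite[Table~15.1]{sambale}, one checks directly that $A/Z$ contains $\SL(2,3)$ or $\SL(2,5)$ and has a cyclic Sylow $3$-subgroup. In all cases $r\geq 2$.

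Putting the pieces together gives $|\Irr(K|\lambda)|=\tfrac{r(r+3)}{2}\geq 5>2$, the required contradiction. I expect the genuine obstacle to be the bound $r\geq 2$: indeed $\tfrac{r(r+3)}{2}=2$ holds precisely when $r=1$, so ruling out full ramification of $\lambda$ in $A$ is equivalent to the statement and is exactly the step that forces one to invoke the classification of $2$-transitive solvable groups. A secondary point needing care is the claim that the base-group cocycle has no mixing term; should this require justification, one can bypass the exact formula altogether, since two distinct characters in $\Irr(A|\lambda)$ already produce at least three $K$-orbits on $\Irr(H|\lambda)$ and hence $|\Irr(K|\lambda)|\geq 3>2$.
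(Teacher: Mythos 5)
Your argument locates the difficulty in the wrong place: the step that fails is not $r\geq 2$ but the counting identity $|\Irr(H|\lambda)|=r^2$ that converts $r\geq 2$ into $|\Irr(K|\lambda)|\geq 3$. That identity presumes $H$ is a genuine central product of the preimages $A_1,A_2$ over $Z$, i.e.\ that $[A_1,A_2]=1$. But the wreath decomposition lives in $K/Z$, so all you know is $[A_1,A_2]\subseteq Z$; and since $\lambda$ is \emph{faithful} on $Z$ (Step 3 of the reduction), any nontrivial value of the commutator pairing $A_1/Z\times A_2/Z\to Z$ destroys the pairs-parametrization. This is exactly the extraspecial phenomenon: if $H/Z\cong \CCC_q\times\CCC_q$ with $[A_1,A_2]\neq 1$ and $\lambda$ faithful, then $|\Irr(H|\lambda)|=1$ while $|\Irr(A_i|\lambda)|=q$. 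You flag this as "a secondary point needing care," but your fallback does not bypass it: the claim that two distinct members of $\Irr(A|\lambda)$ "already produce at least three $K$-orbits on $\Irr(H|\lambda)$" again manufactures characters of $H$ as products of characters of $A_1$ and $A_2$, which is the very decomposition in question. With a nontrivial mixing term there may be fewer than three characters of $H$ over $\lambda$ altogether, and nothing in your hypotheses rules that out. Note also that your non-semilinear input --- that every $A/Z$ in \cite[Table 15.1]{sambale} has a cyclic Sylow $3$-subgroup --- is asserted without verification (when $|A:N|$ is divisible by $3$ the Sylow $3$-subgroup has order $9$ and cyclicity is not clear), though even granting it, $r\geq2$ alone does not close the argument.

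The paper's proof is structured precisely to avoid the cocycle issue, and comparing the two is instructive. In the semilinear case it invokes \cite[Lemma 4]{MRS23}, which produces an abelian Sylow $q$-subgroup $Q$ of the group $K$ \emph{itself} (odd $q$, $QZ/Z\neq 1$); then $\lambda$ extends to the abelian group $QZ$, and Higgs's Theorem \ref{lem:demeyerjanusz2} applied to $K$ gives $|\Irr(K|\lambda)|>2$ directly --- working with an actual subgroup of $K$ rather than with $K/Z$ is what eliminates any mixing term, whereas your Zsigmondy argument (which is fine as far as it goes, including the $p+1=2^k$ exception via an odd prime divisor of $p-1$) only yields information about $A$. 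In the Table 15.1 case the paper never decomposes $\Irr(H|\lambda)$: it extends $\lambda$ to $N$ with $N/Z\cong\SL(2,3)$, takes the unique degree-$3$ character $\alpha\in\Irr(N|\lambda)$, observes $\alpha$ is $MN$-invariant and extends it to $MN\normal K$ by the Schur multiplier argument of Lemma \ref{lem:schur} --- a step that is insensitive to $[M,N]\subseteq Z$ --- and then Gallagher produces characters of three distinct degrees ($3$, $6$, $9$) in $\Irr(MN|\lambda)$, which necessarily lie in three distinct $K$-orbits, forcing $|\Irr(K|\lambda)|\geq 3$. To salvage your route you would have to prove the mixing term vanishes (equivalently, that $\lambda$ is trivial on $[A_1,A_2]$), and no such argument is offered; as it stands the proposal has a genuine gap.
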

\begin{proof}
Assume first that $A/Z\leq \Gamma(p^m)$. By \cite[Lemma 4]{MRS23}, $K$ has an abelian Sylow $q$-subgroup $Q$ for some odd prime $q$, and $|QZ/Z|>1$. Thus $\lambda$ extends to $QZ$ and $QZ/Z\in\Syl_q(K/Z)$, and by Theorem \ref{lem:demeyerjanusz2} we have $|\Irr(K|\lambda)|>2$, as desired.

Hence we may assume $A/Z$ is one of the solvable groups in case (ii) of Theorem \ref{thm:passman}. It is easy to check in \cite{GAP} that for all possibilities of $A/Z$, there is a normal subgroup $N/Z\normal A/Z$ such that $N/Z\cong \SL(2,3)$. We assume first that $S\cong \SSS_2$. Let $B/Z$ denote the $K$-conjugate subgroup of $A/Z$ so that $K/Z=(A/Z\times B/Z)\rtimes S$ and let $M/Z$ be the $K$-conjugate of $N/Z$ in $B/Z$. 

By Lemma \ref{lem:schur}, $\lambda$ extends to $\Irr(N)$ and by Gallagher's theorem there is a unique character $\alpha\in\Irr(N)$ of degree $3$ in $\Irr(N|\lambda)$. Since $H$ normalizes $N$ and $\lambda$ is $K$-invariant, it follows that $\alpha$ is $MN$-invariant and again by Lemma \ref{lem:schur} $\alpha$ extends to $MN\normal K$. By Gallagher's theorem, there are characters of three distinct degrees in $\Irr(MN|\alpha)\sbs\Irr(MN|\lambda)$, which contradicts $|\Irr(K|\lambda)|=2.$
\end{proof}

\begin{pro}\label{pro:imp4-3}
Assume that $S\cong \SSS_3$ or $\CCC_3$. Then  $|\Irr(K|\lambda)|>2$. In particular, Theorem \ref{thm:reduced} holds.
\end{pro}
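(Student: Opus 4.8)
As recorded in the opening of this section, the rank-$4$ hypothesis together with Brauer's formula (Lemma \ref{lem:brauer formula}) and Lemma \ref{lem:l(B)} gives the standing identity $|\Irr(K|\lambda)| = l(B) = 2$, whence $K/Z$ is solvable by Theorem \ref{lem:demeyerjanusz2}; in particular $A/Z$ is one of the solvable $2$-transitive groups of Theorem \ref{thm:passman}. My plan is to contradict $|\Irr(K|\lambda)|=2$ by producing at least three distinct irreducible character degrees over $\lambda$ inside a suitable normal subgroup. Concretely, since the character degree is constant on $K$-orbits, for any $L \normal K$ with $Z \le L \le H$ Clifford's theorem gives $|\Irr(K|\lambda)| \ge$ (number of $K$-orbits on $\Irr(L|\lambda)$) $\ge$ (number of distinct degrees occurring in $\Irr(L|\lambda)$); so it suffices to find such an $L$ for which $\Irr(L|\lambda)$ has at least three distinct degrees. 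Both the case $S \cong \CCC_3$ and the case $S \cong \SSS_3$ will be treated at once, as the argument uses only that $S$ permutes the three direct factors of the base group $H/Z \cong (A/Z)^3$.

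\textbf{Case $A/Z \le \Gamma(p^m)$.} Here I would repeat the first paragraph of the proof of Proposition \ref{pro:imp4} almost verbatim: since $A/Z$ is metacyclic and $H/Z$ is a direct cube, \cite[Lemma 4]{MRS23} (whose proof goes through with $S \in \{\CCC_3,\SSS_3\}$ in place of $\SSS_2$) furnishes an odd prime $q$ and a nontrivial abelian Sylow $q$-subgroup $Q$ of $K$ with $QZ/Z \in \Syl_q(K/Z)$. As $Z$ is central, $QZ$ is abelian, so $\lambda$ extends to $QZ$ and $|\Irr(QZ|\lambda)| = |QZ/Z| > 1$ by Gallagher's theorem. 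This contradicts the conclusion $|\Irr(QZ|\lambda)| = 1$ forced for the odd prime $q$ by Theorem \ref{lem:demeyerjanusz2}.

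\textbf{Case $\SL(2,3) \cong N/Z \normal A/Z$.} Now $m = 2$ and $H/Z \cong (A/Z)^3$ contains three $K$-conjugate copies $N_1/Z,N_2/Z,N_3/Z \cong \SL(2,3)$, one in each direct factor; since $S$ permutes these factors, $L := N_1N_2N_3 \normal K$ and $L/N_1 \cong \SL(2,3) \times \SL(2,3)$. Because the Sylow subgroups of $\SL(2,3)$ have trivial Schur multiplier, Lemma \ref{lem:schur} lets $\lambda$ extend to $N_1$, and by Gallagher's theorem there is a \emph{unique} $\alpha \in \Irr(N_1 | \lambda)$ of degree $3$; uniqueness forces $\alpha$ to be fixed by everything that normalizes $N_1$ and fixes $\lambda$, hence $\alpha$ is $L$-invariant. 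By the theory of character triples, $\Irr(L|\alpha)$ is then in degree-preserving correspondence, up to the factor $\alpha(1) = 3$, with the irreducible projective representations of $L/N_1 \cong \SL(2,3) \times \SL(2,3)$ affording a fixed cocycle $\omega$ of order dividing $3$ (the Schur multiplier of $\SL(2,3) \times \SL(2,3)$ being $\CCC_3$). A direct check shows these projective representations realize at least three distinct degrees for either value of $\omega$ — the degrees $1,2,3,4,6,9$ when $\omega$ is trivial and the degrees $3,6,9,12$ when $\omega$ has order $3$ — so the characters in $\Irr(L|\alpha) \subseteq \Irr(L|\lambda)$ take at least three distinct degrees. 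Thus $\Irr(L|\lambda)$ meets at least three $K$-orbits and $|\Irr(K|\lambda)| \ge 3$, the desired contradiction.

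I expect the second case to be the only real difficulty, and to be genuinely harder than its rank-$3$ analogue. In Proposition \ref{pro:imp4} one has two factors, $L/N_1 \cong \SL(2,3)$ has trivial Schur multiplier, so $\alpha$ extends to $L$ and Gallagher at once yields the three degrees $3,6,9$. With three factors $L/N_1 \cong \SL(2,3) \times \SL(2,3)$ has Schur multiplier $\CCC_3 \ne 1$, so $\alpha$ need not extend to $L$; equivalently the three copies $N_i$ need not commute, their pairwise commutators landing in the (cyclic, hence of order at most $3$) $3$-part of $Z$. The crux is therefore to pin down the projective character degrees for the nontrivial cocycle. I would obtain the list $3,6,9,12$ either by restricting such a projective representation to $\QQQ_8 \times \QQQ_8 = \SL(2,3)' \times \SL(2,3)'$, on which $\omega$ is trivial, and running Clifford theory over this normal subgroup, or by a short computation in \cite{GAP} with a representation group of $\SL(2,3) \times \SL(2,3)$. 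Everything else — the reduction to three distinct degrees, the uniform treatment of $S \cong \CCC_3$ and $S \cong \SSS_3$, and the semilinear case above — is then routine.
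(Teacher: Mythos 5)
Your overall architecture (mimic Proposition \ref{pro:imp4}: force $|\Irr(K|\lambda)|=2$, then produce at least three $K$-orbits on $\Irr(L|\lambda)$ for a suitable $L\normal K$) is reasonable, and it is genuinely different from what the paper does; but your Case 1 has a real gap. You invoke \cite[Lemma 4]{MRS23} ``whose proof goes through with $S\in\{\CCC_3,\SSS_3\}$ in place of $\SSS_2$,'' and this transfer fails. The lemma is used for $S\cong\SSS_2$, where no odd prime divides $|S|$, so for odd $q$ the Sylow $q$-subgroups of $K/Z$ are just direct products of Sylow $q$-subgroups of the base factors. With three blocks the prime $3$ divides $|S|$, and if $3$ also divides $|A/Z|$ (which happens whenever $p^m\equiv 1 \pmod 3$, since $\CCC_{p^m-1}\cong\Gamma_0\leq A/Z$ by $2$-transitivity), the Sylow $3$-subgroup of $K/Z$ is $P\wr\CCC_3$ with $P\neq 1$, hence nonabelian. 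Concretely, take $m=1$, $p=13$: then $A/Z=\Gamma(13)\cong\CCC_{12}$ and $K/Z\leq\CCC_{12}\wr S$, whose only odd Sylow subgroups are isomorphic to $\CCC_3\wr\CCC_3$; here $K$ simply has no nontrivial abelian Sylow subgroup for any odd prime, so the conclusion of the lemma you are porting is false, not merely unproved. Worse, your fallback tools also give nothing in this configuration: Theorem \ref{lem:demeyerjanusz2} only forces $|Q/Z|$ to be a square, and $|\CCC_3\wr\CCC_3|=81$ \emph{is} a square, while Lemma \ref{lem:schur} does not apply since $\CCC_3\wr\CCC_3$ has nontrivial Schur multiplier. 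So the subcase ``$A/Z\leq\Gamma(p^m)$ with $3\mid |A/Z|$'' is genuinely unhandled, and it is not an edge case but the typical one.

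For what it is worth, your Case 2 is essentially sound: the cocycle class does live in $H^2(\SL(2,3)^2,\CC^\times)\cong\CCC_3$ (inflated from the abelianizations), and your claimed projective degree lists check out --- running Clifford theory over $\QQQ_8\times\QQQ_8$, the nontrivial cocycle yields irreducible projective representations of degrees $3,6,9,12$ (with multiplicities $7,8,1,1$, and $7\cdot 9+8\cdot 36+81+144=576$), so $\Irr(L|\alpha)$ always realizes at least three degrees. The paper, however, avoids both of your cases entirely with one uniform counting argument: by \cite[p.~33]{V93} and \cite[Theorem 18.10]{Hup98} there is a $G$-orbit on $\Irr(D)$ of size $3(p^m-1)^2$; Theorem \ref{lem:demeyerjanusz2} forces $|K:Z|=2b^2$; hence for a representative $\mu$ of that orbit $|G_\mu:ZD|=2b^2/(3(p^m-1)^2)$ has non-square $3$-part, contradicting Lemma \ref{lem:contradiction}, which under the rank-$4$ hypothesis makes $\mu\times\lambda$ fully ramified in $G_\mu$ for \emph{every} nontrivial $\mu$. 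Note that this argument needs no case division according to the structure of $A/Z$, which is precisely where your proposal breaks. If you want to salvage your route, you must either find a substitute argument for the semilinear case when the only odd prime is $3$ (for instance, an orbit-size or full-ramification count in the spirit of the paper), or prove directly that $\CCC_3\wr\CCC_3$ (and more generally $P\wr\CCC_3$) cannot occur as the relevant fully ramified Sylow quotient --- neither of which is routine.
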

\begin{proof} By \cite[p. 33]{V93} and \cite[Theorem 18.10]{Hup98} there is a $G$-orbit on $\Irr(D)$ of size $3(p^m-1)^2$ (here, we are considering the natural action of $G$ on $\Irr(D)$ induced by the action by conjugation on $D$). Now by Theorem \ref{lem:demeyerjanusz2} we have that the Sylow $q$-subgroups have square order, and the Sylow $2$-subgroups have order an odd power of $2$. Thus we may write $|G:ZD|=|K:Z|=2b^2$ for some positive integer $b$. If $\mu$ is a representative of the orbit of size $3(p^m-1)^2$, then
$$|G_\mu:ZD|=\frac{|G:ZD|}{|G:G_\mu|}=\frac{2b^2}{3(p^m-1)^2}$$ so $|G_\mu:ZD|_3$ can not be a square. This contradicts Lemma \ref{lem:contradiction}.
 \end{proof}

\subsection{The proof}\label{sec:proof of thm reduced} We can complete the proof of Theorem \ref{thm:A} by establishing Theorem \ref{thm:reduced}.

\textit{Proof of Theorem \ref{thm:reduced}.}
If $G/ZD$ is isomorphic to a subgroup of the semilinear group on $D$ then this is proved in Proposition \ref{pro:semilinear} regardless of the rank, so this handles case (i) of Theorems \ref{thm:passman}, \ref{thm:liebeck} and \ref{thm:foulser}. If $|D|=p^2$ then this is proved in Proposition \ref{pro:d=2}, which handles case (ii) of Theorems \ref{thm:passman}, \ref{thm:liebeck} and \ref{thm:foulser} and the cases (iv) of Theorems \ref{thm:liebeck} and \ref{thm:foulser} whenever $d=2$. If $G/ZD$ is as in case (iii) of Theorems \ref{thm:liebeck} and \ref{thm:foulser} then this is proved in Propositions \ref{pro:foulserex} and \ref{pro:liebeckex}. 
We are left with the case that $G/ZD$ is as in (iv) of Theorems \ref{thm:liebeck} and \ref{thm:foulser} when $d>2$. If the rank of $G/ZD$ on $D$ is 3 then see Proposition \ref{pro:imp3}. If the rank of $G/ZD$ on $D$ is 4 then see Propositions \ref{pro:imp4}, and \ref{pro:imp4-3}. This completes the proof. \qed


\end{document}